\newcommand{\ie}{\emph{i.e.}}
\newcommand{\eg}{\emph{e.g.}}
\newcommand{\epsn}{\varepsilon_0}
\newcommand{\eqdef}{=}
\newcommand{\AD}[1]{\AgdaDatatype{#1}}
\newcommand{\AF}[1]{\AgdaFunction{#1}}
\newcommand{\AS}[1]{\AgdaSymbol{#1}}
\newcommand{\AB}[1]{\AgdaBound{#1}}
\newcommand{\AC}[1]{\AgdaInductiveConstructor{#1}}
\newcommand{\ASto}{\AS{→}\xspace}
\newcommand{\ASequiv}{\AgdaOperator{\AF{≡}}\xspace}
\newcommand{\oabr}{\AgdaOperator{\AgdaInductiveConstructor{ω\textasciicircum{}}}~\AgdaBound{a}~\AgdaOperator{\AgdaInductiveConstructor{+}}~\AgdaBound{b}~\AgdaOperator{\AgdaInductiveConstructor{[}}~\AgdaBound{r}~\AgdaOperator{\AgdaInductiveConstructor{]}}}
\newcommand{\ocds}{\AgdaOperator{\AgdaInductiveConstructor{ω\textasciicircum{}}}~\AgdaBound{c}~\AgdaOperator{\AgdaInductiveConstructor{+}}~\AgdaBound{d}~\AgdaOperator{\AgdaInductiveConstructor{[}}~\AgdaBound{s}~\AgdaOperator{\AgdaInductiveConstructor{]}}}
\theoremstyle{acmdefinition}
\newtheorem{remark}[theorem]{Remark}
\newcommand{\LevelVar}{
\begin{code}%
\>[0]\AgdaKeyword{variable}\AgdaSpace{}%
\AgdaGeneralizable{ℓ}\AgdaSpace{}%
\AgdaGeneralizable{ℓ'}\AgdaSpace{}%
\AgdaGeneralizable{ℓ''}\AgdaSpace{}%
\AgdaSymbol{:}\AgdaSpace{}%
\AgdaPostulate{Level}\<%
\end{code}}
\newcommand{\IdFunNoLevel}{
\begin{code}%
\>[0]\AgdaFunction{id}\AgdaSpace{}%
\AgdaSymbol{:}\AgdaSpace{}%
\AgdaSymbol{\{}\AgdaBound{A}\AgdaSpace{}%
\AgdaSymbol{:}\AgdaSpace{}%
\AgdaFunction{Type}\AgdaSpace{}%
\AgdaGeneralizable{ℓ}\AgdaSymbol{\}}\AgdaSpace{}%
\AgdaSymbol{→}\AgdaSpace{}%
\AgdaBound{A}\AgdaSpace{}%
\AgdaSymbol{→}\AgdaSpace{}%
\AgdaBound{A}\<%
\end{code}}
\newcommand{\SumType}{
\begin{code}%
\>[0]\AgdaKeyword{data}\AgdaSpace{}%
\AgdaOperator{\AgdaDatatype{\AgdaUnderscore{}⊎\AgdaUnderscore{}}}\AgdaSpace{}%
\AgdaSymbol{(}\AgdaBound{A}\AgdaSpace{}%
\AgdaSymbol{:}\AgdaSpace{}%
\AgdaFunction{Type}\AgdaSpace{}%
\AgdaGeneralizable{ℓ}\AgdaSymbol{)}\AgdaSpace{}%
\AgdaSymbol{(}\AgdaBound{B}\AgdaSpace{}%
\AgdaSymbol{:}\AgdaSpace{}%
\AgdaFunction{Type}\AgdaSpace{}%
\AgdaGeneralizable{ℓ'}\AgdaSymbol{)}\AgdaSpace{}%
\AgdaSymbol{:}\AgdaSpace{}%
\AgdaFunction{Type}\AgdaSpace{}%
\AgdaSymbol{(}\AgdaBound{ℓ}\AgdaSpace{}%
\AgdaOperator{\AgdaPrimitive{⊔}}\AgdaSpace{}%
\AgdaBound{ℓ'}\AgdaSymbol{)}\<%
\\
\>[0][@{}l@{\AgdaIndent{0}}]%
\>[1]\AgdaKeyword{where}\<%
\\
\>[1][@{}l@{\AgdaIndent{0}}]%
\>[2]\AgdaInductiveConstructor{inj₁}\AgdaSpace{}%
\AgdaSymbol{:}\AgdaSpace{}%
\AgdaBound{A}\AgdaSpace{}%
\AgdaSymbol{→}\AgdaSpace{}%
\AgdaBound{A}\AgdaSpace{}%
\AgdaOperator{\AgdaDatatype{⊎}}\AgdaSpace{}%
\AgdaBound{B}\<%
\\
\>[2]\AgdaInductiveConstructor{inj₂}\AgdaSpace{}%
\AgdaSymbol{:}\AgdaSpace{}%
\AgdaBound{B}\AgdaSpace{}%
\AgdaSymbol{→}\AgdaSpace{}%
\AgdaBound{A}\AgdaSpace{}%
\AgdaOperator{\AgdaDatatype{⊎}}\AgdaSpace{}%
\AgdaBound{B}\<%
\end{code}}
\newcommand{\SigmaType}{
\begin{code}%
\>[0]\AgdaKeyword{record}\AgdaSpace{}%
\AgdaRecord{Σ}\AgdaSpace{}%
\AgdaSymbol{\{}\AgdaBound{A}\AgdaSpace{}%
\AgdaSymbol{:}\AgdaSpace{}%
\AgdaFunction{Type}\AgdaSpace{}%
\AgdaGeneralizable{ℓ}\AgdaSymbol{\}}\AgdaSpace{}%
\AgdaSymbol{(}\AgdaBound{B}\AgdaSpace{}%
\AgdaSymbol{:}\AgdaSpace{}%
\AgdaBound{A}\AgdaSpace{}%
\AgdaSymbol{→}\AgdaSpace{}%
\AgdaFunction{Type}\AgdaSpace{}%
\AgdaGeneralizable{ℓ'}\AgdaSymbol{)}\AgdaSpace{}%
\AgdaSymbol{:}\AgdaSpace{}%
\AgdaPrimitiveType{Set}\AgdaSpace{}%
\AgdaSymbol{(}\AgdaBound{ℓ}\AgdaSpace{}%
\AgdaOperator{\AgdaPrimitive{⊔}}\AgdaSpace{}%
\AgdaBound{ℓ'}\AgdaSymbol{)}\<%
\\
\>[0][@{}l@{\AgdaIndent{0}}]%
\>[1]\AgdaKeyword{where}\<%
\\
\>[1][@{}l@{\AgdaIndent{0}}]%
\>[2]\AgdaKeyword{constructor}\AgdaSpace{}%
\AgdaOperator{\AgdaInductiveConstructor{\AgdaUnderscore{},\AgdaUnderscore{}}}\<%
\\
\>[2]\AgdaKeyword{field}\<%
\\
\>[2][@{}l@{\AgdaIndent{0}}]%
\>[3]\AgdaField{pr₁}\AgdaSpace{}%
\AgdaSymbol{:}\AgdaSpace{}%
\AgdaBound{A}\<%
\\
\>[3]\AgdaField{pr₂}\AgdaSpace{}%
\AgdaSymbol{:}\AgdaSpace{}%
\AgdaBound{B}\AgdaSpace{}%
\AgdaField{pr₁}\<%
\end{code}}
\newcommand{\BinProd}{
\begin{code}%
\>[0]\AgdaOperator{\AgdaFunction{\AgdaUnderscore{}×\AgdaUnderscore{}}}\AgdaSpace{}%
\AgdaSymbol{:}\AgdaSpace{}%
\AgdaFunction{Type}\AgdaSpace{}%
\AgdaGeneralizable{ℓ}\AgdaSpace{}%
\AgdaSymbol{→}\AgdaSpace{}%
\AgdaFunction{Type}\AgdaSpace{}%
\AgdaGeneralizable{ℓ'}\AgdaSpace{}%
\AgdaSymbol{→}\AgdaSpace{}%
\AgdaFunction{Type}\AgdaSpace{}%
\AgdaSymbol{(}\AgdaGeneralizable{ℓ}\AgdaSpace{}%
\AgdaOperator{\AgdaPrimitive{⊔}}\AgdaSpace{}%
\AgdaGeneralizable{ℓ'}\AgdaSymbol{)}\<%
\\
\>[0]\AgdaBound{A}\AgdaSpace{}%
\AgdaOperator{\AgdaFunction{×}}\AgdaSpace{}%
\AgdaBound{B}\AgdaSpace{}%
\AgdaSymbol{=}\AgdaSpace{}%
\AgdaRecord{Σ}\AgdaSpace{}%
\AgdaSymbol{\textbackslash{}(}\AgdaBound{\AgdaUnderscore{}}\AgdaSpace{}%
\AgdaSymbol{:}\AgdaSpace{}%
\AgdaBound{A}\AgdaSymbol{)}\AgdaSpace{}%
\AgdaSymbol{→}\AgdaSpace{}%
\AgdaBound{B}\<%
\end{code}}
\newcommand{\isPS}{
\begin{code}%
\>[0]\AgdaFunction{isProp}\AgdaSpace{}%
\AgdaFunction{isSet}\AgdaSpace{}%
\AgdaSymbol{:}\AgdaSpace{}%
\AgdaFunction{Type}\AgdaSpace{}%
\AgdaGeneralizable{ℓ}\AgdaSpace{}%
\AgdaSymbol{→}\AgdaSpace{}%
\AgdaFunction{Type}\AgdaSpace{}%
\AgdaGeneralizable{ℓ}\<%
\\
\>[0]\AgdaFunction{isProp}\AgdaSpace{}%
\AgdaBound{A}\AgdaSpace{}%
\AgdaSymbol{=}\AgdaSpace{}%
\AgdaSymbol{(}\AgdaBound{x}\AgdaSpace{}%
\AgdaBound{y}\AgdaSpace{}%
\AgdaSymbol{:}\AgdaSpace{}%
\AgdaBound{A}\AgdaSymbol{)}\AgdaSpace{}%
\AgdaSymbol{→}\AgdaSpace{}%
\AgdaBound{x}\AgdaSpace{}%
\AgdaOperator{\AgdaFunction{≡}}\AgdaSpace{}%
\AgdaBound{y}\<%
\\
\>[0]\AgdaFunction{isSet}\AgdaSpace{}%
\AgdaBound{A}\AgdaSpace{}%
\AgdaSymbol{=}\AgdaSpace{}%
\AgdaSymbol{\{}\AgdaBound{x}\AgdaSpace{}%
\AgdaBound{y}\AgdaSpace{}%
\AgdaSymbol{:}\AgdaSpace{}%
\AgdaBound{A}\AgdaSymbol{\}}\AgdaSpace{}%
\AgdaSymbol{→}\AgdaSpace{}%
\AgdaFunction{isProp}\AgdaSpace{}%
\AgdaSymbol{(}\AgdaBound{x}\AgdaSpace{}%
\AgdaOperator{\AgdaFunction{≡}}\AgdaSpace{}%
\AgdaBound{y}\AgdaSymbol{)}\<%
\end{code}}
\newcommand{\NatLeq}{
\begin{code}%
\>[0]\AgdaKeyword{data}\AgdaSpace{}%
\AgdaOperator{\AgdaDatatype{\AgdaUnderscore{}≤ᴺ\AgdaUnderscore{}}}\AgdaSpace{}%
\AgdaSymbol{:}\AgdaSpace{}%
\AgdaDatatype{ℕ}\AgdaSpace{}%
\AgdaSymbol{→}\AgdaSpace{}%
\AgdaDatatype{ℕ}\AgdaSpace{}%
\AgdaSymbol{→}\AgdaSpace{}%
\AgdaFunction{Type₀}\AgdaSpace{}%
\AgdaKeyword{where}\<%
\\
\>[0][@{}l@{\AgdaIndent{0}}]%
\>[1]\AgdaInductiveConstructor{z≤n}\AgdaSpace{}%
\AgdaSymbol{:}\AgdaSpace{}%
\AgdaSymbol{\{}\AgdaBound{n}\AgdaSpace{}%
\AgdaSymbol{:}\AgdaSpace{}%
\AgdaDatatype{ℕ}\AgdaSymbol{\}}\AgdaSpace{}%
\AgdaSymbol{→}\AgdaSpace{}%
\AgdaInductiveConstructor{zero}\AgdaSpace{}%
\AgdaOperator{\AgdaDatatype{≤ᴺ}}\AgdaSpace{}%
\AgdaBound{n}\<%
\\
\>[1]\AgdaInductiveConstructor{s≤s}\AgdaSpace{}%
\AgdaSymbol{:}\AgdaSpace{}%
\AgdaSymbol{\{}\AgdaBound{n}\AgdaSpace{}%
\AgdaBound{m}\AgdaSpace{}%
\AgdaSymbol{:}\AgdaSpace{}%
\AgdaDatatype{ℕ}\AgdaSymbol{\}}\AgdaSpace{}%
\AgdaSymbol{→}\AgdaSpace{}%
\AgdaBound{n}\AgdaSpace{}%
\AgdaOperator{\AgdaDatatype{≤ᴺ}}\AgdaSpace{}%
\AgdaBound{m}\AgdaSpace{}%
\AgdaSymbol{→}\AgdaSpace{}%
\AgdaInductiveConstructor{suc}\AgdaSpace{}%
\AgdaBound{n}\AgdaSpace{}%
\AgdaOperator{\AgdaDatatype{≤ᴺ}}\AgdaSpace{}%
\AgdaInductiveConstructor{suc}\AgdaSpace{}%
\AgdaBound{m}\<%
\end{code}}
\newcommand{\PropEqfromPath}{
\begin{code}%
\>[0]\AgdaFunction{PropEqfromPath}\AgdaSpace{}%
\AgdaSymbol{:}\AgdaSpace{}%
\AgdaSymbol{\{}\AgdaBound{A}\AgdaSpace{}%
\AgdaSymbol{:}\AgdaSpace{}%
\AgdaPrimitiveType{Set}\AgdaSpace{}%
\AgdaGeneralizable{ℓ}\AgdaSymbol{\}}\AgdaSpace{}%
\AgdaSymbol{\{}\AgdaBound{x}\AgdaSpace{}%
\AgdaBound{y}\AgdaSpace{}%
\AgdaSymbol{:}\AgdaSpace{}%
\AgdaBound{A}\AgdaSymbol{\}}\AgdaSpace{}%
\AgdaSymbol{→}\AgdaSpace{}%
\AgdaBound{x}\AgdaSpace{}%
\AgdaOperator{\AgdaFunction{≡}}\AgdaSpace{}%
\AgdaBound{y}\AgdaSpace{}%
\AgdaSymbol{→}\AgdaSpace{}%
\AgdaBound{x}\AgdaSpace{}%
\AgdaOperator{\AgdaDatatype{P.≡}}\AgdaSpace{}%
\AgdaBound{y}\<%
\\
\>[0]\AgdaFunction{PropEqfromPath}\AgdaSpace{}%
\AgdaSymbol{\{}\AgdaArgument{x}\AgdaSpace{}%
\AgdaSymbol{=}\AgdaSpace{}%
\AgdaBound{x}\AgdaSymbol{\}}\AgdaSpace{}%
\AgdaBound{p}\AgdaSpace{}%
\AgdaSymbol{=}\AgdaSpace{}%
\AgdaFunction{subst}\AgdaSpace{}%
\AgdaSymbol{(}\AgdaBound{x}\AgdaSpace{}%
\AgdaOperator{\AgdaDatatype{P.≡\AgdaUnderscore{}}}\AgdaSymbol{)}\AgdaSpace{}%
\AgdaBound{p}\AgdaSpace{}%
\AgdaInductiveConstructor{P.refl}\<%
\end{code}}
\newcommand{\OpCubical}{
\begin{code}%
\>[0]\AgdaSymbol{\{-\#}\AgdaSpace{}%
\AgdaKeyword{OPTIONS}\AgdaSpace{}%
\AgdaPragma{--cubical}\AgdaSpace{}%
\AgdaSymbol{\#-\}}\<%
\end{code}}
\newcommand{\EmptyType}{
\begin{code}%
\>[0]\AgdaKeyword{data}\AgdaSpace{}%
\AgdaDatatype{⊥}\AgdaSpace{}%
\AgdaSymbol{:}\AgdaSpace{}%
\AgdaFunction{Type₀}\AgdaSpace{}%
\AgdaKeyword{where}\<%
\end{code}}
\newcommand{\IdFun}{
\begin{code}%
\>[0]\AgdaFunction{id}\AgdaSpace{}%
\AgdaSymbol{:}\AgdaSpace{}%
\AgdaSymbol{\{}\AgdaBound{ℓ}\AgdaSpace{}%
\AgdaSymbol{:}\AgdaSpace{}%
\AgdaPostulate{Level}\AgdaSymbol{\}}\AgdaSpace{}%
\AgdaSymbol{\{}\AgdaBound{A}\AgdaSpace{}%
\AgdaSymbol{:}\AgdaSpace{}%
\AgdaFunction{Type}\AgdaSpace{}%
\AgdaBound{ℓ}\AgdaSymbol{\}}\AgdaSpace{}%
\AgdaSymbol{→}\AgdaSpace{}%
\AgdaBound{A}\AgdaSpace{}%
\AgdaSymbol{→}\AgdaSpace{}%
\AgdaBound{A}\<%
\\
\>[0]\AgdaFunction{id}\AgdaSpace{}%
\AgdaBound{a}\AgdaSpace{}%
\AgdaSymbol{=}\AgdaSpace{}%
\AgdaBound{a}\<%
\end{code}}
\newcommand{\PathP}{
\begin{code}%
\>[0]\AgdaFunction{PathP}\AgdaSpace{}%
\AgdaSymbol{:}\AgdaSpace{}%
\AgdaSymbol{(}\AgdaBound{A}\AgdaSpace{}%
\AgdaSymbol{:}\AgdaSpace{}%
\AgdaDatatype{I}\AgdaSpace{}%
\AgdaSymbol{→}\AgdaSpace{}%
\AgdaPrimitiveType{Set}\AgdaSpace{}%
\AgdaGeneralizable{ℓ}\AgdaSymbol{)}\AgdaSpace{}%
\AgdaSymbol{→}\AgdaSpace{}%
\AgdaBound{A}\AgdaSpace{}%
\AgdaInductiveConstructor{i0}\AgdaSpace{}%
\AgdaSymbol{→}\AgdaSpace{}%
\AgdaBound{A}\AgdaSpace{}%
\AgdaInductiveConstructor{i1}\AgdaSpace{}%
\AgdaSymbol{→}\AgdaSpace{}%
\AgdaPrimitiveType{Set}\AgdaSpace{}%
\AgdaGeneralizable{ℓ}\<%
\end{code}}
\newcommand{\Eq}{
\begin{code}%
\>[0]\AgdaOperator{\AgdaFunction{\AgdaUnderscore{}≡\AgdaUnderscore{}}}\AgdaSpace{}%
\AgdaSymbol{:}\AgdaSpace{}%
\AgdaSymbol{\{}\AgdaBound{A}\AgdaSpace{}%
\AgdaSymbol{:}\AgdaSpace{}%
\AgdaPrimitiveType{Set}\AgdaSpace{}%
\AgdaGeneralizable{ℓ}\AgdaSymbol{\}}\AgdaSpace{}%
\AgdaSymbol{→}\AgdaSpace{}%
\AgdaBound{A}\AgdaSpace{}%
\AgdaSymbol{→}\AgdaSpace{}%
\AgdaBound{A}\AgdaSpace{}%
\AgdaSymbol{→}\AgdaSpace{}%
\AgdaPrimitiveType{Set}\AgdaSpace{}%
\AgdaGeneralizable{ℓ}\<%
\\
\>[0]\AgdaOperator{\AgdaFunction{\AgdaUnderscore{}≡\AgdaUnderscore{}}}\AgdaSpace{}%
\AgdaSymbol{\{}\AgdaArgument{A}\AgdaSpace{}%
\AgdaSymbol{=}\AgdaSpace{}%
\AgdaBound{A}\AgdaSymbol{\}}\AgdaSpace{}%
\AgdaSymbol{=}\AgdaSpace{}%
\AgdaFunction{PathP}\AgdaSpace{}%
\AgdaSymbol{(λ}\AgdaSpace{}%
\AgdaBound{\AgdaUnderscore{}}\AgdaSpace{}%
\AgdaSymbol{→}\AgdaSpace{}%
\AgdaBound{A}\AgdaSymbol{)}\<%
\end{code}}
\newcommand{\ListOfPathFunctions}{
\begin{code}%
\>[0]\AgdaFunction{refl}\AgdaSpace{}%
\AgdaSymbol{:}\AgdaSpace{}%
\AgdaGeneralizable{x}\AgdaSpace{}%
\AgdaOperator{\AgdaFunction{≡}}\AgdaSpace{}%
\AgdaGeneralizable{x}\<%
\\
\>[0]\AgdaOperator{\AgdaFunction{\AgdaUnderscore{}⁻¹}}\AgdaSpace{}%
\AgdaSymbol{:}\AgdaSpace{}%
\AgdaGeneralizable{x}\AgdaSpace{}%
\AgdaOperator{\AgdaFunction{≡}}\AgdaSpace{}%
\AgdaGeneralizable{y}\AgdaSpace{}%
\AgdaSymbol{→}\AgdaSpace{}%
\AgdaGeneralizable{y}\AgdaSpace{}%
\AgdaOperator{\AgdaFunction{≡}}\AgdaSpace{}%
\AgdaGeneralizable{x}\<%
\\
\>[0]\AgdaOperator{\AgdaFunction{\AgdaUnderscore{}∙\AgdaUnderscore{}}}\AgdaSpace{}%
\AgdaSymbol{:}\AgdaSpace{}%
\AgdaGeneralizable{x}\AgdaSpace{}%
\AgdaOperator{\AgdaFunction{≡}}\AgdaSpace{}%
\AgdaGeneralizable{y}\AgdaSpace{}%
\AgdaSymbol{→}\AgdaSpace{}%
\AgdaGeneralizable{y}\AgdaSpace{}%
\AgdaOperator{\AgdaFunction{≡}}\AgdaSpace{}%
\AgdaGeneralizable{z}\AgdaSpace{}%
\AgdaSymbol{→}\AgdaSpace{}%
\AgdaGeneralizable{x}\AgdaSpace{}%
\AgdaOperator{\AgdaFunction{≡}}\AgdaSpace{}%
\AgdaGeneralizable{z}\<%
\\
\>[0]\AgdaFunction{cong}%
\>[136I]\AgdaSymbol{:}\AgdaSpace{}%
\AgdaSymbol{(}\AgdaBound{f}\AgdaSpace{}%
\AgdaSymbol{:}\AgdaSpace{}%
\AgdaSymbol{(}\AgdaBound{a}\AgdaSpace{}%
\AgdaSymbol{:}\AgdaSpace{}%
\AgdaGeneralizable{A}\AgdaSymbol{)}\AgdaSpace{}%
\AgdaSymbol{→}\AgdaSpace{}%
\AgdaGeneralizable{B}\AgdaSpace{}%
\AgdaBound{a}\AgdaSymbol{)}\AgdaSpace{}%
\AgdaSymbol{(}\AgdaBound{p}\AgdaSpace{}%
\AgdaSymbol{:}\AgdaSpace{}%
\AgdaGeneralizable{x}\AgdaSpace{}%
\AgdaOperator{\AgdaFunction{≡}}\AgdaSpace{}%
\AgdaGeneralizable{y}\AgdaSymbol{)}\<%
\\
\>[.][@{}l@{}]\<[136I]%
\>[5]\AgdaSymbol{→}\AgdaSpace{}%
\AgdaFunction{PathP}\AgdaSpace{}%
\AgdaSymbol{(λ}\AgdaSpace{}%
\AgdaBound{i}\AgdaSpace{}%
\AgdaSymbol{→}\AgdaSpace{}%
\AgdaGeneralizable{B}\AgdaSpace{}%
\AgdaSymbol{(}\AgdaBound{p}\AgdaSpace{}%
\AgdaBound{i}\AgdaSymbol{))}\AgdaSpace{}%
\AgdaSymbol{(}\AgdaBound{f}\AgdaSpace{}%
\AgdaGeneralizable{x}\AgdaSymbol{)}\AgdaSpace{}%
\AgdaSymbol{(}\AgdaBound{f}\AgdaSpace{}%
\AgdaGeneralizable{y}\AgdaSymbol{)}\<%
\\
\>[0]\AgdaFunction{cong₂}%
\>[161I]\AgdaSymbol{:}\AgdaSpace{}%
\AgdaSymbol{(}\AgdaBound{f}\AgdaSpace{}%
\AgdaSymbol{:}\AgdaSpace{}%
\AgdaSymbol{(}\AgdaBound{a}\AgdaSpace{}%
\AgdaSymbol{:}\AgdaSpace{}%
\AgdaGeneralizable{A}\AgdaSymbol{)}\AgdaSpace{}%
\AgdaSymbol{→}\AgdaSpace{}%
\AgdaSymbol{(}\AgdaBound{b}\AgdaSpace{}%
\AgdaSymbol{:}\AgdaSpace{}%
\AgdaGeneralizable{B}\AgdaSpace{}%
\AgdaBound{a}\AgdaSymbol{)}\AgdaSpace{}%
\AgdaSymbol{→}\AgdaSpace{}%
\AgdaGeneralizable{C}\AgdaSpace{}%
\AgdaBound{a}\AgdaSpace{}%
\AgdaBound{b}\AgdaSymbol{)}\<%
\\
\>[.][@{}l@{}]\<[161I]%
\>[6]\AgdaSymbol{→}\AgdaSpace{}%
\AgdaSymbol{(}\AgdaBound{p}\AgdaSpace{}%
\AgdaSymbol{:}\AgdaSpace{}%
\AgdaGeneralizable{x}\AgdaSpace{}%
\AgdaOperator{\AgdaFunction{≡}}\AgdaSpace{}%
\AgdaGeneralizable{y}\AgdaSymbol{)}\<%
\\
\>[6]\AgdaSymbol{→}\AgdaSpace{}%
\AgdaSymbol{\{}\AgdaBound{u}\AgdaSpace{}%
\AgdaSymbol{:}\AgdaSpace{}%
\AgdaGeneralizable{B}\AgdaSpace{}%
\AgdaGeneralizable{x}\AgdaSymbol{\}}\AgdaSpace{}%
\AgdaSymbol{\{}\AgdaBound{v}\AgdaSpace{}%
\AgdaSymbol{:}\AgdaSpace{}%
\AgdaGeneralizable{B}\AgdaSpace{}%
\AgdaGeneralizable{y}\AgdaSymbol{\}}\AgdaSpace{}%
\AgdaSymbol{(}\AgdaBound{q}\AgdaSpace{}%
\AgdaSymbol{:}\AgdaSpace{}%
\AgdaFunction{PathP}\AgdaSpace{}%
\AgdaSymbol{(λ}\AgdaSpace{}%
\AgdaBound{i}\AgdaSpace{}%
\AgdaSymbol{→}\AgdaSpace{}%
\AgdaGeneralizable{B}\AgdaSpace{}%
\AgdaSymbol{(}\AgdaBound{p}\AgdaSpace{}%
\AgdaBound{i}\AgdaSymbol{))}\AgdaSpace{}%
\AgdaBound{u}\AgdaSpace{}%
\AgdaBound{v}\AgdaSymbol{)}\<%
\\
\>[6]\AgdaSymbol{→}\AgdaSpace{}%
\AgdaFunction{PathP}\AgdaSpace{}%
\AgdaSymbol{(λ}\AgdaSpace{}%
\AgdaBound{i}\AgdaSpace{}%
\AgdaSymbol{→}\AgdaSpace{}%
\AgdaGeneralizable{C}\AgdaSpace{}%
\AgdaSymbol{(}\AgdaBound{p}\AgdaSpace{}%
\AgdaBound{i}\AgdaSymbol{)}\AgdaSpace{}%
\AgdaSymbol{(}\AgdaBound{q}\AgdaSpace{}%
\AgdaBound{i}\AgdaSymbol{))}\AgdaSpace{}%
\AgdaSymbol{(}\AgdaBound{f}\AgdaSpace{}%
\AgdaGeneralizable{x}\AgdaSpace{}%
\AgdaBound{u}\AgdaSymbol{)}\AgdaSpace{}%
\AgdaSymbol{(}\AgdaBound{f}\AgdaSpace{}%
\AgdaGeneralizable{y}\AgdaSpace{}%
\AgdaBound{v}\AgdaSymbol{)}\<%
\\
\>[0]\AgdaFunction{transport}\AgdaSpace{}%
\AgdaSymbol{:}\AgdaSpace{}%
\AgdaGeneralizable{A}\AgdaSpace{}%
\AgdaOperator{\AgdaFunction{≡}}\AgdaSpace{}%
\AgdaGeneralizable{X}\AgdaSpace{}%
\AgdaSymbol{→}\AgdaSpace{}%
\AgdaGeneralizable{A}\AgdaSpace{}%
\AgdaSymbol{→}\AgdaSpace{}%
\AgdaGeneralizable{X}\<%
\\
\>[0]\AgdaFunction{subst}\AgdaSpace{}%
\AgdaSymbol{:}\AgdaSpace{}%
\AgdaGeneralizable{x}\AgdaSpace{}%
\AgdaOperator{\AgdaFunction{≡}}\AgdaSpace{}%
\AgdaGeneralizable{y}\AgdaSpace{}%
\AgdaSymbol{→}\AgdaSpace{}%
\AgdaGeneralizable{B}\AgdaSpace{}%
\AgdaGeneralizable{x}\AgdaSpace{}%
\AgdaSymbol{→}\AgdaSpace{}%
\AgdaGeneralizable{B}\AgdaSpace{}%
\AgdaGeneralizable{y}\<%
\end{code}}
\newcommand{\IsoEquiv}{
\begin{code}%
\>[0]\AgdaFunction{isoToEquiv}\AgdaSpace{}%
\AgdaSymbol{:}\AgdaSpace{}%
\AgdaFunction{Iso}\AgdaSpace{}%
\AgdaGeneralizable{A}\AgdaSpace{}%
\AgdaGeneralizable{B}\AgdaSpace{}%
\AgdaSymbol{→}\AgdaSpace{}%
\AgdaGeneralizable{A}\AgdaSpace{}%
\AgdaOperator{\AgdaFunction{≃}}\AgdaSpace{}%
\AgdaGeneralizable{B}\<%
\end{code}}
\newcommand{\EqSyntax}{
\begin{code}%
\>[0][@{}l@{\AgdaIndent{1}}]%
\>[1]\AgdaOperator{\AgdaFunction{begin}}\<%
\\
\>[1]\AgdaBound{x}\AgdaSpace{}%
\AgdaOperator{\AgdaFunction{≡⟨}}\AgdaSpace{}%
\AgdaBound{p}\AgdaSpace{}%
\AgdaOperator{\AgdaFunction{⟩}}\<%
\\
\>[1]\AgdaBound{y}\AgdaSpace{}%
\AgdaOperator{\AgdaFunction{≡⟨}}\AgdaSpace{}%
\AgdaBound{q}\AgdaSpace{}%
\AgdaOperator{\AgdaFunction{⟩}}\<%
\\
\>[1]\AgdaBound{z}\AgdaSpace{}%
\AgdaOperator{\AgdaFunction{∎}}\<%
\end{code}}
\newcommand{\NatDef}{
\begin{code}%
\>[0]\AgdaKeyword{data}\AgdaSpace{}%
\AgdaDatatype{ℕ}\AgdaSpace{}%
\AgdaSymbol{:}\AgdaSpace{}%
\AgdaFunction{Type₀}\AgdaSpace{}%
\AgdaKeyword{where}\<%
\\
\>[0][@{}l@{\AgdaIndent{0}}]%
\>[1]\AgdaInductiveConstructor{zero}\AgdaSpace{}%
\AgdaSymbol{:}\AgdaSpace{}%
\AgdaDatatype{ℕ}\<%
\\
\>[1]\AgdaInductiveConstructor{suc}%
\>[6]\AgdaSymbol{:}\AgdaSpace{}%
\AgdaDatatype{ℕ}\AgdaSpace{}%
\AgdaSymbol{→}\AgdaSpace{}%
\AgdaDatatype{ℕ}\<%
\end{code}}
\newcommand{\SO}{\AgdaFunction{SigmaOrd}\xspace}
\newcommand{\Tree}{
\begin{code}%
\>[0]\AgdaKeyword{data}\AgdaSpace{}%
\AgdaDatatype{Tree}\AgdaSpace{}%
\AgdaSymbol{:}\AgdaSpace{}%
\AgdaFunction{Type₀}\AgdaSpace{}%
\AgdaKeyword{where}\<%
\\
\>[0][@{}l@{\AgdaIndent{0}}]%
\>[1]\AgdaInductiveConstructor{𝟎}\AgdaSpace{}%
\AgdaSymbol{:}\AgdaSpace{}%
\AgdaDatatype{Tree}\<%
\\
\>[1]\AgdaOperator{\AgdaInductiveConstructor{ω\textasciicircum{}\AgdaUnderscore{}+\AgdaUnderscore{}}}\AgdaSpace{}%
\AgdaSymbol{:}\AgdaSpace{}%
\AgdaDatatype{Tree}\AgdaSpace{}%
\AgdaSymbol{→}\AgdaSpace{}%
\AgdaDatatype{Tree}\AgdaSpace{}%
\AgdaSymbol{→}\AgdaSpace{}%
\AgdaDatatype{Tree}\<%
\end{code}}
\newcommand{\TreeFst}{
\begin{code}%
\>[0]\AgdaFunction{fst}\AgdaSpace{}%
\AgdaSymbol{:}\AgdaSpace{}%
\AgdaDatatype{Tree}\AgdaSpace{}%
\AgdaSymbol{→}\AgdaSpace{}%
\AgdaDatatype{Tree}\<%
\\
\>[0]\AgdaFunction{fst}\AgdaSpace{}%
\AgdaInductiveConstructor{𝟎}\AgdaSpace{}%
\AgdaSymbol{=}\AgdaSpace{}%
\AgdaInductiveConstructor{𝟎}\<%
\\
\>[0]\AgdaFunction{fst}\AgdaSpace{}%
\AgdaSymbol{(}\AgdaOperator{\AgdaInductiveConstructor{ω\textasciicircum{}}}\AgdaSpace{}%
\AgdaBound{a}\AgdaSpace{}%
\AgdaOperator{\AgdaInductiveConstructor{+}}\AgdaSpace{}%
\AgdaSymbol{\AgdaUnderscore{})}\AgdaSpace{}%
\AgdaSymbol{=}\AgdaSpace{}%
\AgdaBound{a}\<%
\end{code}}
\newcommand{\TreeLt}{
\begin{code}%
\>[0]\AgdaKeyword{data}\AgdaSpace{}%
\AgdaOperator{\AgdaDatatype{\AgdaUnderscore{}<\AgdaUnderscore{}}}\AgdaSpace{}%
\AgdaSymbol{:}\AgdaSpace{}%
\AgdaDatatype{Tree}\AgdaSpace{}%
\AgdaSymbol{→}\AgdaSpace{}%
\AgdaDatatype{Tree}\AgdaSpace{}%
\AgdaSymbol{→}\AgdaSpace{}%
\AgdaFunction{Type₀}\AgdaSpace{}%
\AgdaKeyword{where}\<%
\\
\>[0][@{}l@{\AgdaIndent{0}}]%
\>[1]\AgdaInductiveConstructor{<₁}\AgdaSpace{}%
\AgdaSymbol{:}\AgdaSpace{}%
\AgdaInductiveConstructor{𝟎}\AgdaSpace{}%
\AgdaOperator{\AgdaDatatype{<}}\AgdaSpace{}%
\AgdaOperator{\AgdaInductiveConstructor{ω\textasciicircum{}}}\AgdaSpace{}%
\AgdaGeneralizable{a}\AgdaSpace{}%
\AgdaOperator{\AgdaInductiveConstructor{+}}\AgdaSpace{}%
\AgdaGeneralizable{b}\<%
\\
\>[1]\AgdaInductiveConstructor{<₂}\AgdaSpace{}%
\AgdaSymbol{:}\AgdaSpace{}%
\AgdaGeneralizable{a}\AgdaSpace{}%
\AgdaOperator{\AgdaDatatype{<}}\AgdaSpace{}%
\AgdaGeneralizable{c}\AgdaSpace{}%
\AgdaSymbol{→}\AgdaSpace{}%
\AgdaOperator{\AgdaInductiveConstructor{ω\textasciicircum{}}}\AgdaSpace{}%
\AgdaGeneralizable{a}\AgdaSpace{}%
\AgdaOperator{\AgdaInductiveConstructor{+}}\AgdaSpace{}%
\AgdaGeneralizable{b}\AgdaSpace{}%
\AgdaOperator{\AgdaDatatype{<}}\AgdaSpace{}%
\AgdaOperator{\AgdaInductiveConstructor{ω\textasciicircum{}}}\AgdaSpace{}%
\AgdaGeneralizable{c}\AgdaSpace{}%
\AgdaOperator{\AgdaInductiveConstructor{+}}\AgdaSpace{}%
\AgdaGeneralizable{d}\<%
\\
\>[1]\AgdaInductiveConstructor{<₃}\AgdaSpace{}%
\AgdaSymbol{:}\AgdaSpace{}%
\AgdaGeneralizable{a}\AgdaSpace{}%
\AgdaOperator{\AgdaFunction{≡}}\AgdaSpace{}%
\AgdaGeneralizable{c}\AgdaSpace{}%
\AgdaSymbol{→}\AgdaSpace{}%
\AgdaGeneralizable{b}\AgdaSpace{}%
\AgdaOperator{\AgdaDatatype{<}}\AgdaSpace{}%
\AgdaGeneralizable{d}\AgdaSpace{}%
\AgdaSymbol{→}\AgdaSpace{}%
\AgdaOperator{\AgdaInductiveConstructor{ω\textasciicircum{}}}\AgdaSpace{}%
\AgdaGeneralizable{a}\AgdaSpace{}%
\AgdaOperator{\AgdaInductiveConstructor{+}}\AgdaSpace{}%
\AgdaGeneralizable{b}\AgdaSpace{}%
\AgdaOperator{\AgdaDatatype{<}}\AgdaSpace{}%
\AgdaOperator{\AgdaInductiveConstructor{ω\textasciicircum{}}}\AgdaSpace{}%
\AgdaGeneralizable{c}\AgdaSpace{}%
\AgdaOperator{\AgdaInductiveConstructor{+}}\AgdaSpace{}%
\AgdaGeneralizable{d}\<%
\end{code}}
\newcommand{\TreeGeq}{
\begin{code}%
\>[0]\AgdaOperator{\AgdaFunction{\AgdaUnderscore{}>\AgdaUnderscore{}}}\AgdaSpace{}%
\AgdaOperator{\AgdaFunction{\AgdaUnderscore{}≥\AgdaUnderscore{}}}\AgdaSpace{}%
\AgdaSymbol{:}\AgdaSpace{}%
\AgdaDatatype{Tree}\AgdaSpace{}%
\AgdaSymbol{→}\AgdaSpace{}%
\AgdaDatatype{Tree}\AgdaSpace{}%
\AgdaSymbol{→}\AgdaSpace{}%
\AgdaFunction{Type₀}\<%
\\
\>[0]\AgdaBound{a}\AgdaSpace{}%
\AgdaOperator{\AgdaFunction{>}}\AgdaSpace{}%
\AgdaBound{b}\AgdaSpace{}%
\AgdaSymbol{=}\AgdaSpace{}%
\AgdaBound{b}\AgdaSpace{}%
\AgdaOperator{\AgdaDatatype{<}}\AgdaSpace{}%
\AgdaBound{a}\<%
\\
\>[0]\AgdaBound{a}\AgdaSpace{}%
\AgdaOperator{\AgdaFunction{≥}}\AgdaSpace{}%
\AgdaBound{b}\AgdaSpace{}%
\AgdaSymbol{=}\AgdaSpace{}%
\AgdaBound{a}\AgdaSpace{}%
\AgdaOperator{\AgdaFunction{>}}\AgdaSpace{}%
\AgdaBound{b}\AgdaSpace{}%
\AgdaOperator{\AgdaDatatype{⊎}}\AgdaSpace{}%
\AgdaBound{a}\AgdaSpace{}%
\AgdaOperator{\AgdaFunction{≡}}\AgdaSpace{}%
\AgdaBound{b}\<%
\end{code}}
\newcommand{\Sfacts}{
\begin{code}%
\>[0]\AgdaFunction{TreeIsSet}\AgdaSpace{}%
\AgdaSymbol{:}\AgdaSpace{}%
\AgdaFunction{isSet}\AgdaSpace{}%
\AgdaDatatype{Tree}\<%
\\
\>[0]\AgdaFunction{<IsPropValued}\AgdaSpace{}%
\AgdaSymbol{:}\AgdaSpace{}%
\AgdaFunction{isProp}\AgdaSpace{}%
\AgdaSymbol{(}\AgdaGeneralizable{a}\AgdaSpace{}%
\AgdaOperator{\AgdaDatatype{<}}\AgdaSpace{}%
\AgdaGeneralizable{b}\AgdaSymbol{)}\<%
\end{code}}
\newcommand{\isCNF}{
\begin{code}%
\>[0]\AgdaKeyword{data}\AgdaSpace{}%
\AgdaDatatype{isCNF}\AgdaSpace{}%
\AgdaSymbol{:}\AgdaSpace{}%
\AgdaDatatype{Tree}\AgdaSpace{}%
\AgdaSymbol{→}\AgdaSpace{}%
\AgdaFunction{Type₀}\AgdaSpace{}%
\AgdaKeyword{where}\<%
\\
\>[0][@{}l@{\AgdaIndent{0}}]%
\>[1]\AgdaInductiveConstructor{𝟎IsCNF}\AgdaSpace{}%
\AgdaSymbol{:}\AgdaSpace{}%
\AgdaDatatype{isCNF}\AgdaSpace{}%
\AgdaInductiveConstructor{𝟎}\<%
\\
\>[1]\AgdaInductiveConstructor{ω\textasciicircum{}+IsCNF}%
\>[488I]\AgdaSymbol{:}\AgdaSpace{}%
\AgdaDatatype{isCNF}\AgdaSpace{}%
\AgdaGeneralizable{a}\AgdaSpace{}%
\AgdaSymbol{→}\AgdaSpace{}%
\AgdaDatatype{isCNF}\AgdaSpace{}%
\AgdaGeneralizable{b}\AgdaSpace{}%
\AgdaSymbol{→}\AgdaSpace{}%
\AgdaGeneralizable{a}\AgdaSpace{}%
\AgdaOperator{\AgdaFunction{≥}}\AgdaSpace{}%
\AgdaFunction{fst}\AgdaSpace{}%
\AgdaGeneralizable{b}\<%
\\
\>[.][@{}l@{}]\<[488I]%
\>[10]\AgdaSymbol{→}\AgdaSpace{}%
\AgdaDatatype{isCNF}\AgdaSpace{}%
\AgdaSymbol{(}\AgdaOperator{\AgdaInductiveConstructor{ω\textasciicircum{}}}\AgdaSpace{}%
\AgdaGeneralizable{a}\AgdaSpace{}%
\AgdaOperator{\AgdaInductiveConstructor{+}}\AgdaSpace{}%
\AgdaGeneralizable{b}\AgdaSymbol{)}\<%
\end{code}}
\newcommand{\isCNFIsPropValued}{
\begin{code}%
\>[0]\AgdaFunction{isCNFIsPropValued}\AgdaSpace{}%
\AgdaSymbol{:}\AgdaSpace{}%
\AgdaFunction{isProp}\AgdaSpace{}%
\AgdaSymbol{(}\AgdaDatatype{isCNF}\AgdaSpace{}%
\AgdaGeneralizable{a}\AgdaSymbol{)}\<%
\end{code}}
\newcommand{\SigmaOrd}{
\begin{code}%
\>[0]\AgdaFunction{SigmaOrd}\AgdaSpace{}%
\AgdaSymbol{:}\AgdaSpace{}%
\AgdaFunction{Type₀}\<%
\\
\>[0]\AgdaFunction{SigmaOrd}\AgdaSpace{}%
\AgdaSymbol{=}\AgdaSpace{}%
\AgdaRecord{Σ}\AgdaSpace{}%
\AgdaSymbol{\textbackslash{}(}\AgdaBound{a}\AgdaSpace{}%
\AgdaSymbol{:}\AgdaSpace{}%
\AgdaDatatype{Tree}\AgdaSymbol{)}\AgdaSpace{}%
\AgdaSymbol{→}\AgdaSpace{}%
\AgdaDatatype{isCNF}\AgdaSpace{}%
\AgdaBound{a}\<%
\end{code}}
\newcommand{\SigmaOrdEq}{
\begin{code}%
\>[0]\AgdaFunction{SigmaOrd⁼}\AgdaSpace{}%
\AgdaSymbol{:}\AgdaSpace{}%
\AgdaSymbol{\{}\AgdaBound{x}\AgdaSpace{}%
\AgdaBound{y}\AgdaSpace{}%
\AgdaSymbol{:}\AgdaSpace{}%
\AgdaFunction{SigmaOrd}\AgdaSymbol{\}}\AgdaSpace{}%
\AgdaSymbol{→}\AgdaSpace{}%
\AgdaField{pr₁}\AgdaSpace{}%
\AgdaBound{x}\AgdaSpace{}%
\AgdaOperator{\AgdaFunction{≡}}\AgdaSpace{}%
\AgdaField{pr₁}\AgdaSpace{}%
\AgdaBound{y}\AgdaSpace{}%
\AgdaSymbol{→}\AgdaSpace{}%
\AgdaBound{x}\AgdaSpace{}%
\AgdaOperator{\AgdaFunction{≡}}\AgdaSpace{}%
\AgdaBound{y}\<%
\end{code}}
\newcommand{\HO}{\AgdaDatatype{HIT\-Ord}\xspace}
\newcommand{\HITOrd}{
\begin{code}%
\>[0]\AgdaKeyword{data}\AgdaSpace{}%
\AgdaDatatype{HITOrd}\AgdaSpace{}%
\AgdaSymbol{:}\AgdaSpace{}%
\AgdaFunction{Type₀}\AgdaSpace{}%
\AgdaKeyword{where}\<%
\\
\>[0][@{}l@{\AgdaIndent{0}}]%
\>[1]\AgdaInductiveConstructor{𝟎}\AgdaSpace{}%
\AgdaSymbol{:}\AgdaSpace{}%
\AgdaDatatype{HITOrd}\<%
\\
\>[1]\AgdaOperator{\AgdaInductiveConstructor{ω\textasciicircum{}\AgdaUnderscore{}⊕\AgdaUnderscore{}}}\AgdaSpace{}%
\AgdaSymbol{:}\AgdaSpace{}%
\AgdaDatatype{HITOrd}\AgdaSpace{}%
\AgdaSymbol{→}\AgdaSpace{}%
\AgdaDatatype{HITOrd}\AgdaSpace{}%
\AgdaSymbol{→}\AgdaSpace{}%
\AgdaDatatype{HITOrd}\<%
\\
\>[1]\AgdaInductiveConstructor{swap}\AgdaSpace{}%
\AgdaSymbol{:}\AgdaSpace{}%
\AgdaSymbol{∀}\AgdaSpace{}%
\AgdaBound{a}\AgdaSpace{}%
\AgdaBound{b}\AgdaSpace{}%
\AgdaBound{c}\AgdaSpace{}%
\AgdaSymbol{→}\AgdaSpace{}%
\AgdaOperator{\AgdaInductiveConstructor{ω\textasciicircum{}}}\AgdaSpace{}%
\AgdaBound{a}\AgdaSpace{}%
\AgdaOperator{\AgdaInductiveConstructor{⊕}}\AgdaSpace{}%
\AgdaOperator{\AgdaInductiveConstructor{ω\textasciicircum{}}}\AgdaSpace{}%
\AgdaBound{b}\AgdaSpace{}%
\AgdaOperator{\AgdaInductiveConstructor{⊕}}\AgdaSpace{}%
\AgdaBound{c}\AgdaSpace{}%
\AgdaOperator{\AgdaFunction{≡}}\AgdaSpace{}%
\AgdaOperator{\AgdaInductiveConstructor{ω\textasciicircum{}}}\AgdaSpace{}%
\AgdaBound{b}\AgdaSpace{}%
\AgdaOperator{\AgdaInductiveConstructor{⊕}}\AgdaSpace{}%
\AgdaOperator{\AgdaInductiveConstructor{ω\textasciicircum{}}}\AgdaSpace{}%
\AgdaBound{a}\AgdaSpace{}%
\AgdaOperator{\AgdaInductiveConstructor{⊕}}\AgdaSpace{}%
\AgdaBound{c}\<%
\\
\>[1]\AgdaInductiveConstructor{trunc}\AgdaSpace{}%
\AgdaSymbol{:}\AgdaSpace{}%
\AgdaFunction{isSet}\AgdaSpace{}%
\AgdaDatatype{HITOrd}\<%
\end{code}}
\newcommand{\Hexample}{
\begin{code}%
\>[0]\AgdaFunction{example}\AgdaSpace{}%
\AgdaSymbol{:}\AgdaSpace{}%
\AgdaSymbol{(}\AgdaBound{a}\AgdaSpace{}%
\AgdaBound{b}\AgdaSpace{}%
\AgdaBound{c}\AgdaSpace{}%
\AgdaSymbol{:}\AgdaSpace{}%
\AgdaDatatype{HITOrd}\AgdaSymbol{)}\<%
\\
\>[0][@{}l@{\AgdaIndent{0}}]%
\>[2]\AgdaSymbol{→}\AgdaSpace{}%
\AgdaOperator{\AgdaInductiveConstructor{ω\textasciicircum{}}}\AgdaSpace{}%
\AgdaBound{a}\AgdaSpace{}%
\AgdaOperator{\AgdaInductiveConstructor{⊕}}\AgdaSpace{}%
\AgdaOperator{\AgdaInductiveConstructor{ω\textasciicircum{}}}\AgdaSpace{}%
\AgdaBound{b}\AgdaSpace{}%
\AgdaOperator{\AgdaInductiveConstructor{⊕}}\AgdaSpace{}%
\AgdaOperator{\AgdaInductiveConstructor{ω\textasciicircum{}}}\AgdaSpace{}%
\AgdaBound{c}\AgdaSpace{}%
\AgdaOperator{\AgdaInductiveConstructor{⊕}}\AgdaSpace{}%
\AgdaInductiveConstructor{𝟎}\AgdaSpace{}%
\AgdaOperator{\AgdaFunction{≡}}\AgdaSpace{}%
\AgdaOperator{\AgdaInductiveConstructor{ω\textasciicircum{}}}\AgdaSpace{}%
\AgdaBound{c}\AgdaSpace{}%
\AgdaOperator{\AgdaInductiveConstructor{⊕}}\AgdaSpace{}%
\AgdaOperator{\AgdaInductiveConstructor{ω\textasciicircum{}}}\AgdaSpace{}%
\AgdaBound{b}\AgdaSpace{}%
\AgdaOperator{\AgdaInductiveConstructor{⊕}}\AgdaSpace{}%
\AgdaOperator{\AgdaInductiveConstructor{ω\textasciicircum{}}}\AgdaSpace{}%
\AgdaBound{a}\AgdaSpace{}%
\AgdaOperator{\AgdaInductiveConstructor{⊕}}\AgdaSpace{}%
\AgdaInductiveConstructor{𝟎}\<%
\\
\>[0]\AgdaFunction{example}\AgdaSpace{}%
\AgdaBound{a}\AgdaSpace{}%
\AgdaBound{b}\AgdaSpace{}%
\AgdaBound{c}\AgdaSpace{}%
\AgdaSymbol{=}\AgdaSpace{}%
\AgdaOperator{\AgdaFunction{begin}}\<%
\\
\>[0][@{}l@{\AgdaIndent{0}}]%
\>[1]\AgdaOperator{\AgdaInductiveConstructor{ω\textasciicircum{}}}\AgdaSpace{}%
\AgdaBound{a}\AgdaSpace{}%
\AgdaOperator{\AgdaInductiveConstructor{⊕}}\AgdaSpace{}%
\AgdaOperator{\AgdaInductiveConstructor{ω\textasciicircum{}}}\AgdaSpace{}%
\AgdaBound{b}\AgdaSpace{}%
\AgdaOperator{\AgdaInductiveConstructor{⊕}}\AgdaSpace{}%
\AgdaOperator{\AgdaInductiveConstructor{ω\textasciicircum{}}}\AgdaSpace{}%
\AgdaBound{c}\AgdaSpace{}%
\AgdaOperator{\AgdaInductiveConstructor{⊕}}\AgdaSpace{}%
\AgdaInductiveConstructor{𝟎}\AgdaSpace{}%
\AgdaOperator{\AgdaFunction{≡⟨}}\AgdaSpace{}%
\AgdaInductiveConstructor{swap}\AgdaSpace{}%
\AgdaBound{a}\AgdaSpace{}%
\AgdaBound{b}\AgdaSpace{}%
\AgdaSymbol{\AgdaUnderscore{}}\AgdaSpace{}%
\AgdaOperator{\AgdaFunction{⟩}}\<%
\\
\>[1]\AgdaOperator{\AgdaInductiveConstructor{ω\textasciicircum{}}}\AgdaSpace{}%
\AgdaBound{b}\AgdaSpace{}%
\AgdaOperator{\AgdaInductiveConstructor{⊕}}\AgdaSpace{}%
\AgdaOperator{\AgdaInductiveConstructor{ω\textasciicircum{}}}\AgdaSpace{}%
\AgdaBound{a}\AgdaSpace{}%
\AgdaOperator{\AgdaInductiveConstructor{⊕}}\AgdaSpace{}%
\AgdaOperator{\AgdaInductiveConstructor{ω\textasciicircum{}}}\AgdaSpace{}%
\AgdaBound{c}\AgdaSpace{}%
\AgdaOperator{\AgdaInductiveConstructor{⊕}}\AgdaSpace{}%
\AgdaInductiveConstructor{𝟎}\AgdaSpace{}%
\AgdaOperator{\AgdaFunction{≡⟨}}\AgdaSpace{}%
\AgdaFunction{cong}\AgdaSpace{}%
\AgdaSymbol{(}\AgdaOperator{\AgdaInductiveConstructor{ω\textasciicircum{}}}\AgdaSpace{}%
\AgdaBound{b}\AgdaSpace{}%
\AgdaOperator{\AgdaInductiveConstructor{⊕\AgdaUnderscore{}}}\AgdaSymbol{)}\AgdaSpace{}%
\AgdaSymbol{(}\AgdaInductiveConstructor{swap}\AgdaSpace{}%
\AgdaBound{a}\AgdaSpace{}%
\AgdaBound{c}\AgdaSpace{}%
\AgdaSymbol{\AgdaUnderscore{})}\AgdaSpace{}%
\AgdaOperator{\AgdaFunction{⟩}}\<%
\\
\>[1]\AgdaOperator{\AgdaInductiveConstructor{ω\textasciicircum{}}}\AgdaSpace{}%
\AgdaBound{b}\AgdaSpace{}%
\AgdaOperator{\AgdaInductiveConstructor{⊕}}\AgdaSpace{}%
\AgdaOperator{\AgdaInductiveConstructor{ω\textasciicircum{}}}\AgdaSpace{}%
\AgdaBound{c}\AgdaSpace{}%
\AgdaOperator{\AgdaInductiveConstructor{⊕}}\AgdaSpace{}%
\AgdaOperator{\AgdaInductiveConstructor{ω\textasciicircum{}}}\AgdaSpace{}%
\AgdaBound{a}\AgdaSpace{}%
\AgdaOperator{\AgdaInductiveConstructor{⊕}}\AgdaSpace{}%
\AgdaInductiveConstructor{𝟎}\AgdaSpace{}%
\AgdaOperator{\AgdaFunction{≡⟨}}\AgdaSpace{}%
\AgdaInductiveConstructor{swap}\AgdaSpace{}%
\AgdaBound{b}\AgdaSpace{}%
\AgdaBound{c}\AgdaSpace{}%
\AgdaSymbol{\AgdaUnderscore{}}\AgdaSpace{}%
\AgdaOperator{\AgdaFunction{⟩}}\<%
\\
\>[1]\AgdaOperator{\AgdaInductiveConstructor{ω\textasciicircum{}}}\AgdaSpace{}%
\AgdaBound{c}\AgdaSpace{}%
\AgdaOperator{\AgdaInductiveConstructor{⊕}}\AgdaSpace{}%
\AgdaOperator{\AgdaInductiveConstructor{ω\textasciicircum{}}}\AgdaSpace{}%
\AgdaBound{b}\AgdaSpace{}%
\AgdaOperator{\AgdaInductiveConstructor{⊕}}\AgdaSpace{}%
\AgdaOperator{\AgdaInductiveConstructor{ω\textasciicircum{}}}\AgdaSpace{}%
\AgdaBound{a}\AgdaSpace{}%
\AgdaOperator{\AgdaInductiveConstructor{⊕}}\AgdaSpace{}%
\AgdaInductiveConstructor{𝟎}\AgdaSpace{}%
\AgdaOperator{\AgdaFunction{∎}}\<%
\end{code}}
\newcommand{\Hrec}{
\begin{code}%
\>[0]\AgdaFunction{rec}%
\>[343I]\AgdaSymbol{:}\AgdaSpace{}%
\AgdaSymbol{\{}\AgdaBound{A}\AgdaSpace{}%
\AgdaSymbol{:}\AgdaSpace{}%
\AgdaFunction{Type}\AgdaSpace{}%
\AgdaGeneralizable{ℓ}\AgdaSymbol{\}}\<%
\\
\>[.][@{}l@{}]\<[343I]%
\>[4]\AgdaSymbol{→}\AgdaSpace{}%
\AgdaFunction{isSet}\AgdaSpace{}%
\AgdaBound{A}\<%
\\
\>[4]\AgdaSymbol{→}\AgdaSpace{}%
\AgdaBound{A}\<%
\\
\>[4]\AgdaSymbol{→}\AgdaSpace{}%
\AgdaSymbol{(}\AgdaOperator{\AgdaBound{\AgdaUnderscore{}⋆\AgdaUnderscore{}}}\AgdaSpace{}%
\AgdaSymbol{:}\AgdaSpace{}%
\AgdaBound{A}\AgdaSpace{}%
\AgdaSymbol{→}\AgdaSpace{}%
\AgdaBound{A}\AgdaSpace{}%
\AgdaSymbol{→}\AgdaSpace{}%
\AgdaBound{A}\AgdaSymbol{)}\<%
\\
\>[4]\AgdaSymbol{→}\AgdaSpace{}%
\AgdaSymbol{(∀}\AgdaSpace{}%
\AgdaBound{x}\AgdaSpace{}%
\AgdaBound{y}\AgdaSpace{}%
\AgdaBound{z}\AgdaSpace{}%
\AgdaSymbol{→}\AgdaSpace{}%
\AgdaBound{x}\AgdaSpace{}%
\AgdaOperator{\AgdaBound{⋆}}\AgdaSpace{}%
\AgdaSymbol{(}\AgdaBound{y}\AgdaSpace{}%
\AgdaOperator{\AgdaBound{⋆}}\AgdaSpace{}%
\AgdaBound{z}\AgdaSymbol{)}\AgdaSpace{}%
\AgdaOperator{\AgdaFunction{≡}}\AgdaSpace{}%
\AgdaBound{y}\AgdaSpace{}%
\AgdaOperator{\AgdaBound{⋆}}\AgdaSpace{}%
\AgdaSymbol{(}\AgdaBound{x}\AgdaSpace{}%
\AgdaOperator{\AgdaBound{⋆}}\AgdaSpace{}%
\AgdaBound{z}\AgdaSymbol{))}\<%
\\
\>[4]\AgdaSymbol{→}\AgdaSpace{}%
\AgdaDatatype{HITOrd}\AgdaSpace{}%
\AgdaSymbol{→}\AgdaSpace{}%
\AgdaBound{A}\<%
\end{code}}
\newcommand{\HindProp}{
\begin{code}%
\>[0]\AgdaFunction{indProp}%
\>[388I]\AgdaSymbol{:}\AgdaSpace{}%
\AgdaSymbol{(}\AgdaBound{P}\AgdaSpace{}%
\AgdaSymbol{:}\AgdaSpace{}%
\AgdaDatatype{HITOrd}\AgdaSpace{}%
\AgdaSymbol{→}\AgdaSpace{}%
\AgdaFunction{Type}\AgdaSpace{}%
\AgdaGeneralizable{ℓ}\AgdaSymbol{)}\<%
\\
\>[.][@{}l@{}]\<[388I]%
\>[8]\AgdaSymbol{→}\AgdaSpace{}%
\AgdaSymbol{(∀}\AgdaSpace{}%
\AgdaSymbol{\{}\AgdaBound{x}\AgdaSymbol{\}}\AgdaSpace{}%
\AgdaSymbol{→}\AgdaSpace{}%
\AgdaFunction{isProp}\AgdaSpace{}%
\AgdaSymbol{(}\AgdaBound{P}\AgdaSpace{}%
\AgdaBound{x}\AgdaSymbol{))}\<%
\\
\>[8]\AgdaSymbol{→}\AgdaSpace{}%
\AgdaBound{P}\AgdaSpace{}%
\AgdaInductiveConstructor{𝟎}\<%
\\
\>[8]\AgdaSymbol{→}\AgdaSpace{}%
\AgdaSymbol{(∀}\AgdaSpace{}%
\AgdaSymbol{\{}\AgdaBound{x}\AgdaSpace{}%
\AgdaBound{y}\AgdaSymbol{\}}\AgdaSpace{}%
\AgdaSymbol{→}\AgdaSpace{}%
\AgdaBound{P}\AgdaSpace{}%
\AgdaBound{x}\AgdaSpace{}%
\AgdaSymbol{→}\AgdaSpace{}%
\AgdaBound{P}\AgdaSpace{}%
\AgdaBound{y}\AgdaSpace{}%
\AgdaSymbol{→}\AgdaSpace{}%
\AgdaBound{P}\AgdaSpace{}%
\AgdaSymbol{(}\AgdaOperator{\AgdaInductiveConstructor{ω\textasciicircum{}}}\AgdaSpace{}%
\AgdaBound{x}\AgdaSpace{}%
\AgdaOperator{\AgdaInductiveConstructor{⊕}}\AgdaSpace{}%
\AgdaBound{y}\AgdaSymbol{))}\<%
\\
\>[8]\AgdaSymbol{→}\AgdaSpace{}%
\AgdaSymbol{∀}\AgdaSpace{}%
\AgdaBound{x}\AgdaSpace{}%
\AgdaSymbol{→}\AgdaSpace{}%
\AgdaBound{P}\AgdaSpace{}%
\AgdaBound{x}\<%
\end{code}}
\newcommand{\MO}{\AgdaDatatype{Mutual\-Ord}\xspace}
\newcommand{\Mdefs}{
\begin{code}%
\>[0]\AgdaKeyword{data}\AgdaSpace{}%
\AgdaDatatype{MutualOrd}\AgdaSpace{}%
\AgdaSymbol{:}\AgdaSpace{}%
\AgdaFunction{Type₀}\<%
\\
\>[0]\AgdaKeyword{data}\AgdaSpace{}%
\AgdaOperator{\AgdaDatatype{\AgdaUnderscore{}<\AgdaUnderscore{}}}\AgdaSpace{}%
\AgdaSymbol{:}\AgdaSpace{}%
\AgdaDatatype{MutualOrd}\AgdaSpace{}%
\AgdaSymbol{→}\AgdaSpace{}%
\AgdaDatatype{MutualOrd}\AgdaSpace{}%
\AgdaSymbol{→}\AgdaSpace{}%
\AgdaFunction{Type₀}\<%
\\
\>[0]\AgdaFunction{fst}\AgdaSpace{}%
\AgdaSymbol{:}\AgdaSpace{}%
\AgdaDatatype{MutualOrd}\AgdaSpace{}%
\AgdaSymbol{→}\AgdaSpace{}%
\AgdaDatatype{MutualOrd}\<%
\end{code}}
\newcommand{\MutualOrd}{
\begin{code}%
\>[0]\AgdaKeyword{data}\AgdaSpace{}%
\AgdaDatatype{MutualOrd}\AgdaSpace{}%
\AgdaKeyword{where}\<%
\\
\>[0][@{}l@{\AgdaIndent{0}}]%
\>[1]\AgdaInductiveConstructor{𝟎}\AgdaSpace{}%
\AgdaSymbol{:}\AgdaSpace{}%
\AgdaDatatype{MutualOrd}\<%
\\
\>[1]\AgdaOperator{\AgdaInductiveConstructor{ω\textasciicircum{}\AgdaUnderscore{}+\AgdaUnderscore{}[\AgdaUnderscore{}]}}\AgdaSpace{}%
\AgdaSymbol{:}\AgdaSpace{}%
\AgdaSymbol{(}\AgdaBound{a}\AgdaSpace{}%
\AgdaBound{b}\AgdaSpace{}%
\AgdaSymbol{:}\AgdaSpace{}%
\AgdaDatatype{MutualOrd}\AgdaSymbol{)}\AgdaSpace{}%
\AgdaSymbol{→}\AgdaSpace{}%
\AgdaBound{a}\AgdaSpace{}%
\AgdaOperator{\AgdaFunction{≥}}\AgdaSpace{}%
\AgdaFunction{fst}\AgdaSpace{}%
\AgdaBound{b}\AgdaSpace{}%
\AgdaSymbol{→}\AgdaSpace{}%
\AgdaDatatype{MutualOrd}\<%
\end{code}}
\newcommand{\MOrdLt}{
\begin{code}%
\>[0]\AgdaKeyword{data}\AgdaSpace{}%
\AgdaOperator{\AgdaDatatype{\AgdaUnderscore{}<\AgdaUnderscore{}}}\AgdaSpace{}%
\AgdaKeyword{where}\<%
\\
\>[0][@{}l@{\AgdaIndent{0}}]%
\>[1]\AgdaInductiveConstructor{<₁}\AgdaSpace{}%
\AgdaSymbol{:}\AgdaSpace{}%
\AgdaInductiveConstructor{𝟎}\AgdaSpace{}%
\AgdaOperator{\AgdaDatatype{<}}\AgdaSpace{}%
\AgdaOperator{\AgdaInductiveConstructor{ω\textasciicircum{}}}\AgdaSpace{}%
\AgdaGeneralizable{a}\AgdaSpace{}%
\AgdaOperator{\AgdaInductiveConstructor{+}}\AgdaSpace{}%
\AgdaGeneralizable{b}\AgdaSpace{}%
\AgdaOperator{\AgdaInductiveConstructor{[}}\AgdaSpace{}%
\AgdaGeneralizable{r}\AgdaSpace{}%
\AgdaOperator{\AgdaInductiveConstructor{]}}\<%
\\
\>[1]\AgdaInductiveConstructor{<₂}\AgdaSpace{}%
\AgdaSymbol{:}\AgdaSpace{}%
\AgdaGeneralizable{a}\AgdaSpace{}%
\AgdaOperator{\AgdaDatatype{<}}\AgdaSpace{}%
\AgdaGeneralizable{c}\AgdaSpace{}%
\AgdaSymbol{→}\AgdaSpace{}%
\AgdaOperator{\AgdaInductiveConstructor{ω\textasciicircum{}}}\AgdaSpace{}%
\AgdaGeneralizable{a}\AgdaSpace{}%
\AgdaOperator{\AgdaInductiveConstructor{+}}\AgdaSpace{}%
\AgdaGeneralizable{b}\AgdaSpace{}%
\AgdaOperator{\AgdaInductiveConstructor{[}}\AgdaSpace{}%
\AgdaGeneralizable{r}\AgdaSpace{}%
\AgdaOperator{\AgdaInductiveConstructor{]}}\AgdaSpace{}%
\AgdaOperator{\AgdaDatatype{<}}\AgdaSpace{}%
\AgdaOperator{\AgdaInductiveConstructor{ω\textasciicircum{}}}\AgdaSpace{}%
\AgdaGeneralizable{c}\AgdaSpace{}%
\AgdaOperator{\AgdaInductiveConstructor{+}}\AgdaSpace{}%
\AgdaGeneralizable{d}\AgdaSpace{}%
\AgdaOperator{\AgdaInductiveConstructor{[}}\AgdaSpace{}%
\AgdaGeneralizable{s}\AgdaSpace{}%
\AgdaOperator{\AgdaInductiveConstructor{]}}\<%
\\
\>[1]\AgdaInductiveConstructor{<₃}\AgdaSpace{}%
\AgdaSymbol{:}\AgdaSpace{}%
\AgdaGeneralizable{a}\AgdaSpace{}%
\AgdaOperator{\AgdaFunction{≡}}\AgdaSpace{}%
\AgdaGeneralizable{c}\AgdaSpace{}%
\AgdaSymbol{→}\AgdaSpace{}%
\AgdaGeneralizable{b}\AgdaSpace{}%
\AgdaOperator{\AgdaDatatype{<}}\AgdaSpace{}%
\AgdaGeneralizable{d}\AgdaSpace{}%
\AgdaSymbol{→}\AgdaSpace{}%
\AgdaOperator{\AgdaInductiveConstructor{ω\textasciicircum{}}}\AgdaSpace{}%
\AgdaGeneralizable{a}\AgdaSpace{}%
\AgdaOperator{\AgdaInductiveConstructor{+}}\AgdaSpace{}%
\AgdaGeneralizable{b}\AgdaSpace{}%
\AgdaOperator{\AgdaInductiveConstructor{[}}\AgdaSpace{}%
\AgdaGeneralizable{r}\AgdaSpace{}%
\AgdaOperator{\AgdaInductiveConstructor{]}}\AgdaSpace{}%
\AgdaOperator{\AgdaDatatype{<}}\AgdaSpace{}%
\AgdaOperator{\AgdaInductiveConstructor{ω\textasciicircum{}}}\AgdaSpace{}%
\AgdaGeneralizable{c}\AgdaSpace{}%
\AgdaOperator{\AgdaInductiveConstructor{+}}\AgdaSpace{}%
\AgdaGeneralizable{d}\AgdaSpace{}%
\AgdaOperator{\AgdaInductiveConstructor{[}}\AgdaSpace{}%
\AgdaGeneralizable{s}\AgdaSpace{}%
\AgdaOperator{\AgdaInductiveConstructor{]}}\<%
\end{code}}
\newcommand{\MOrdFst}{
\begin{code}%
\>[0]\AgdaFunction{fst}%
\>[5]\AgdaInductiveConstructor{𝟎}%
\>[21]\AgdaSymbol{=}\AgdaSpace{}%
\AgdaInductiveConstructor{𝟎}\<%
\\
\>[0]\AgdaFunction{fst}\AgdaSpace{}%
\AgdaSymbol{(}\AgdaOperator{\AgdaInductiveConstructor{ω\textasciicircum{}}}\AgdaSpace{}%
\AgdaBound{a}\AgdaSpace{}%
\AgdaOperator{\AgdaInductiveConstructor{+}}\AgdaSpace{}%
\AgdaSymbol{\AgdaUnderscore{}}\AgdaSpace{}%
\AgdaOperator{\AgdaInductiveConstructor{[}}\AgdaSpace{}%
\AgdaSymbol{\AgdaUnderscore{}}\AgdaSpace{}%
\AgdaOperator{\AgdaInductiveConstructor{]}}\AgdaSymbol{)}\AgdaSpace{}%
\AgdaSymbol{=}\AgdaSpace{}%
\AgdaBound{a}\<%
\end{code}}
\newcommand{\Mfacts}{
\begin{code}%
\>[0]\AgdaFunction{MutualOrdIsSet}\AgdaSpace{}%
\AgdaSymbol{:}\AgdaSpace{}%
\AgdaFunction{isSet}\AgdaSpace{}%
\AgdaDatatype{MutualOrd}\<%
\\
\>[0]\AgdaFunction{<IsPropValued}\AgdaSpace{}%
\AgdaSymbol{:}\AgdaSpace{}%
\AgdaFunction{isProp}\AgdaSpace{}%
\AgdaSymbol{(}\AgdaGeneralizable{a}\AgdaSpace{}%
\AgdaOperator{\AgdaDatatype{<}}\AgdaSpace{}%
\AgdaGeneralizable{b}\AgdaSymbol{)}\<%
\\
\>[0]\AgdaFunction{MutualOrd⁼}%
\>[384I]\AgdaSymbol{:}\AgdaSpace{}%
\AgdaSymbol{\{}\AgdaBound{r}\AgdaSpace{}%
\AgdaSymbol{:}\AgdaSpace{}%
\AgdaGeneralizable{a}\AgdaSpace{}%
\AgdaOperator{\AgdaFunction{≥}}\AgdaSpace{}%
\AgdaFunction{fst}\AgdaSpace{}%
\AgdaGeneralizable{b}\AgdaSymbol{\}}\AgdaSpace{}%
\AgdaSymbol{\{}\AgdaBound{s}\AgdaSpace{}%
\AgdaSymbol{:}\AgdaSpace{}%
\AgdaGeneralizable{c}\AgdaSpace{}%
\AgdaOperator{\AgdaFunction{≥}}\AgdaSpace{}%
\AgdaFunction{fst}\AgdaSpace{}%
\AgdaGeneralizable{d}\AgdaSymbol{\}}\AgdaSpace{}%
\AgdaSymbol{→}\AgdaSpace{}%
\AgdaGeneralizable{a}\AgdaSpace{}%
\AgdaOperator{\AgdaFunction{≡}}\AgdaSpace{}%
\AgdaGeneralizable{c}\AgdaSpace{}%
\AgdaSymbol{→}\AgdaSpace{}%
\AgdaGeneralizable{b}\AgdaSpace{}%
\AgdaOperator{\AgdaFunction{≡}}\AgdaSpace{}%
\AgdaGeneralizable{d}\<%
\\
\>[.][@{}l@{}]\<[384I]%
\>[11]\AgdaSymbol{→}\AgdaSpace{}%
\AgdaOperator{\AgdaInductiveConstructor{ω\textasciicircum{}}}\AgdaSpace{}%
\AgdaGeneralizable{a}\AgdaSpace{}%
\AgdaOperator{\AgdaInductiveConstructor{+}}\AgdaSpace{}%
\AgdaGeneralizable{b}\AgdaSpace{}%
\AgdaOperator{\AgdaInductiveConstructor{[}}\AgdaSpace{}%
\AgdaBound{r}\AgdaSpace{}%
\AgdaOperator{\AgdaInductiveConstructor{]}}\AgdaSpace{}%
\AgdaOperator{\AgdaFunction{≡}}\AgdaSpace{}%
\AgdaOperator{\AgdaInductiveConstructor{ω\textasciicircum{}}}\AgdaSpace{}%
\AgdaGeneralizable{c}\AgdaSpace{}%
\AgdaOperator{\AgdaInductiveConstructor{+}}\AgdaSpace{}%
\AgdaGeneralizable{d}\AgdaSpace{}%
\AgdaOperator{\AgdaInductiveConstructor{[}}\AgdaSpace{}%
\AgdaBound{s}\AgdaSpace{}%
\AgdaOperator{\AgdaInductiveConstructor{]}}\<%
\end{code}}
\newcommand{\Mtri}{
\begin{code}%
\>[0]\AgdaFunction{<-tri}\AgdaSpace{}%
\AgdaSymbol{:}\AgdaSpace{}%
\AgdaSymbol{(}\AgdaBound{a}\AgdaSpace{}%
\AgdaBound{b}\AgdaSpace{}%
\AgdaSymbol{:}\AgdaSpace{}%
\AgdaDatatype{MutualOrd}\AgdaSymbol{)}\AgdaSpace{}%
\AgdaSymbol{→}\AgdaSpace{}%
\AgdaBound{a}\AgdaSpace{}%
\AgdaOperator{\AgdaDatatype{<}}\AgdaSpace{}%
\AgdaBound{b}\AgdaSpace{}%
\AgdaOperator{\AgdaDatatype{⊎}}\AgdaSpace{}%
\AgdaBound{a}\AgdaSpace{}%
\AgdaOperator{\AgdaFunction{≥}}\AgdaSpace{}%
\AgdaBound{b}\<%
\end{code}}
\newcommand{\importOrds}{
\begin{code}%
\>[0]\AgdaKeyword{open}\AgdaSpace{}%
\AgdaKeyword{import}\AgdaSpace{}%
\AgdaModule{SigmaOrd}%
\>[23]\AgdaSymbol{as}\AgdaSpace{}%
\AgdaModule{S}\<%
\\
\>[0]\AgdaKeyword{open}\AgdaSpace{}%
\AgdaKeyword{import}\AgdaSpace{}%
\AgdaModule{MutualOrd}%
\>[23]\AgdaSymbol{as}\AgdaSpace{}%
\AgdaModule{M}\<%
\\
\>[0]\AgdaKeyword{open}\AgdaSpace{}%
\AgdaKeyword{import}\AgdaSpace{}%
\AgdaModule{HITOrd}%
\>[23]\AgdaSymbol{as}\AgdaSpace{}%
\AgdaModule{H}\<%
\end{code}}
\newcommand{\TtoMsig}{
\begin{code}%
\>[0]\AgdaFunction{T2M}\AgdaSpace{}%
\AgdaSymbol{:}\AgdaSpace{}%
\AgdaSymbol{\{}\AgdaBound{a}\AgdaSpace{}%
\AgdaSymbol{:}\AgdaSpace{}%
\AgdaDatatype{Tree}\AgdaSymbol{\}}\AgdaSpace{}%
\AgdaSymbol{→}\AgdaSpace{}%
\AgdaDatatype{isCNF}\AgdaSpace{}%
\AgdaBound{a}\AgdaSpace{}%
\AgdaSymbol{→}\AgdaSpace{}%
\AgdaDatatype{MutualOrd}\<%
\\
\>[0]\AgdaFunction{T2M[<]}%
\>[59I]\AgdaSymbol{:}\AgdaSpace{}%
\AgdaSymbol{\{}\AgdaBound{a}\AgdaSpace{}%
\AgdaBound{b}\AgdaSpace{}%
\AgdaSymbol{:}\AgdaSpace{}%
\AgdaDatatype{Tree}\AgdaSymbol{\}}\AgdaSpace{}%
\AgdaSymbol{\{}\AgdaBound{p}\AgdaSpace{}%
\AgdaSymbol{:}\AgdaSpace{}%
\AgdaDatatype{isCNF}\AgdaSpace{}%
\AgdaBound{a}\AgdaSymbol{\}}\AgdaSpace{}%
\AgdaSymbol{\{}\AgdaBound{q}\AgdaSpace{}%
\AgdaSymbol{:}\AgdaSpace{}%
\AgdaDatatype{isCNF}\AgdaSpace{}%
\AgdaBound{b}\AgdaSymbol{\}}\<%
\\
\>[59I][@{}l@{\AgdaIndent{0}}]%
\>[8]\AgdaSymbol{→}\AgdaSpace{}%
\AgdaBound{a}\AgdaSpace{}%
\AgdaOperator{\AgdaDatatype{S.<}}\AgdaSpace{}%
\AgdaBound{b}\AgdaSpace{}%
\AgdaSymbol{→}\AgdaSpace{}%
\AgdaFunction{T2M}\AgdaSpace{}%
\AgdaBound{p}\AgdaSpace{}%
\AgdaOperator{\AgdaDatatype{M.<}}\AgdaSpace{}%
\AgdaFunction{T2M}\AgdaSpace{}%
\AgdaBound{q}\<%
\\
\>[0]\AgdaFunction{T2M[≥fst]}%
\>[81I]\AgdaSymbol{:}\AgdaSpace{}%
\AgdaSymbol{\{}\AgdaBound{a}\AgdaSpace{}%
\AgdaBound{b}\AgdaSpace{}%
\AgdaSymbol{:}\AgdaSpace{}%
\AgdaDatatype{Tree}\AgdaSymbol{\}}\AgdaSpace{}%
\AgdaSymbol{\{}\AgdaBound{p}\AgdaSpace{}%
\AgdaSymbol{:}\AgdaSpace{}%
\AgdaDatatype{isCNF}\AgdaSpace{}%
\AgdaBound{a}\AgdaSymbol{\}}\AgdaSpace{}%
\AgdaSymbol{(}\AgdaBound{q}\AgdaSpace{}%
\AgdaSymbol{:}\AgdaSpace{}%
\AgdaDatatype{isCNF}\AgdaSpace{}%
\AgdaBound{b}\AgdaSymbol{)}\<%
\\
\>[.][@{}l@{}]\<[81I]%
\>[10]\AgdaSymbol{→}\AgdaSpace{}%
\AgdaBound{a}\AgdaSpace{}%
\AgdaOperator{\AgdaFunction{S.≥}}\AgdaSpace{}%
\AgdaFunction{S.fst}\AgdaSpace{}%
\AgdaBound{b}\AgdaSpace{}%
\AgdaSymbol{→}\AgdaSpace{}%
\AgdaFunction{T2M}\AgdaSpace{}%
\AgdaBound{p}\AgdaSpace{}%
\AgdaOperator{\AgdaFunction{M.≥}}\AgdaSpace{}%
\AgdaFunction{M.fst}\AgdaSpace{}%
\AgdaSymbol{(}\AgdaFunction{T2M}\AgdaSpace{}%
\AgdaBound{q}\AgdaSymbol{)}\<%
\end{code}}
\newcommand{\TtoMEqsig}{
\begin{code}%
\>[0]\AgdaFunction{T2M[≡]}%
\>[105I]\AgdaSymbol{:}\AgdaSpace{}%
\AgdaSymbol{\{}\AgdaBound{a}\AgdaSpace{}%
\AgdaBound{b}\AgdaSpace{}%
\AgdaSymbol{:}\AgdaSpace{}%
\AgdaDatatype{Tree}\AgdaSymbol{\}}\AgdaSpace{}%
\AgdaSymbol{\{}\AgdaBound{p}\AgdaSpace{}%
\AgdaSymbol{:}\AgdaSpace{}%
\AgdaDatatype{isCNF}\AgdaSpace{}%
\AgdaBound{a}\AgdaSymbol{\}}\AgdaSpace{}%
\AgdaSymbol{\{}\AgdaBound{q}\AgdaSpace{}%
\AgdaSymbol{:}\AgdaSpace{}%
\AgdaDatatype{isCNF}\AgdaSpace{}%
\AgdaBound{b}\AgdaSymbol{\}}\<%
\\
\>[.][@{}l@{}]\<[105I]%
\>[7]\AgdaSymbol{→}\AgdaSpace{}%
\AgdaBound{a}\AgdaSpace{}%
\AgdaOperator{\AgdaFunction{≡}}\AgdaSpace{}%
\AgdaBound{b}\AgdaSpace{}%
\AgdaSymbol{→}\AgdaSpace{}%
\AgdaFunction{T2M}\AgdaSpace{}%
\AgdaBound{p}\AgdaSpace{}%
\AgdaOperator{\AgdaFunction{≡}}\AgdaSpace{}%
\AgdaFunction{T2M}\AgdaSpace{}%
\AgdaBound{q}\<%
\\
\>[0]\AgdaFunction{T2M[≡]}\AgdaSpace{}%
\AgdaBound{a=b}\AgdaSpace{}%
\AgdaKeyword{with}\AgdaSpace{}%
\AgdaFunction{PropEqfromPath}\AgdaSpace{}%
\AgdaBound{a=b}\<%
\\
\>[0]\AgdaFunction{T2M[≡]}\AgdaSpace{}%
\AgdaBound{a=b}\AgdaSpace{}%
\AgdaSymbol{|}\AgdaSpace{}%
\AgdaInductiveConstructor{P.refl}\AgdaSpace{}%
\AgdaSymbol{=}\AgdaSpace{}%
\AgdaFunction{cong}\AgdaSpace{}%
\AgdaFunction{T2M}\AgdaSpace{}%
\AgdaSymbol{(}\AgdaFunction{isCNFIsPropValued}\AgdaSpace{}%
\AgdaSymbol{\AgdaUnderscore{}}\AgdaSpace{}%
\AgdaSymbol{\AgdaUnderscore{})}\<%
\end{code}}
\newcommand{\TtoMimp}{
\begin{code}%
\>[0]\AgdaFunction{T2M}\AgdaSpace{}%
\AgdaInductiveConstructor{𝟎IsCNF}\AgdaSpace{}%
\AgdaSymbol{=}\AgdaSpace{}%
\AgdaInductiveConstructor{𝟎}\<%
\\
\>[0]\AgdaFunction{T2M}\AgdaSpace{}%
\AgdaSymbol{(}\AgdaInductiveConstructor{ω\textasciicircum{}+IsCNF}\AgdaSpace{}%
\AgdaBound{p}\AgdaSpace{}%
\AgdaBound{q}\AgdaSpace{}%
\AgdaBound{r}\AgdaSymbol{)}\AgdaSpace{}%
\AgdaSymbol{=}\<%
\\
\>[0][@{}l@{\AgdaIndent{0}}]%
\>[2]\AgdaOperator{\AgdaInductiveConstructor{ω\textasciicircum{}}}\AgdaSpace{}%
\AgdaSymbol{(}\AgdaFunction{T2M}\AgdaSpace{}%
\AgdaBound{p}\AgdaSymbol{)}\AgdaSpace{}%
\AgdaOperator{\AgdaInductiveConstructor{+}}\AgdaSpace{}%
\AgdaSymbol{(}\AgdaFunction{T2M}\AgdaSpace{}%
\AgdaBound{q}\AgdaSymbol{)}\AgdaSpace{}%
\AgdaOperator{\AgdaInductiveConstructor{[}}\AgdaSpace{}%
\AgdaFunction{T2M[≥fst]}\AgdaSpace{}%
\AgdaBound{q}\AgdaSpace{}%
\AgdaBound{r}\AgdaSpace{}%
\AgdaOperator{\AgdaInductiveConstructor{]}}\<%
\end{code}}
\newcommand{\StoM}{
\begin{code}%
\>[0]\AgdaFunction{S2M}\AgdaSpace{}%
\AgdaSymbol{:}\AgdaSpace{}%
\AgdaFunction{SigmaOrd}\AgdaSpace{}%
\AgdaSymbol{→}\AgdaSpace{}%
\AgdaDatatype{MutualOrd}\<%
\\
\>[0]\AgdaFunction{S2M}\AgdaSpace{}%
\AgdaSymbol{(}\AgdaBound{a}\AgdaSpace{}%
\AgdaOperator{\AgdaInductiveConstructor{,}}\AgdaSpace{}%
\AgdaBound{p}\AgdaSymbol{)}\AgdaSpace{}%
\AgdaSymbol{=}\AgdaSpace{}%
\AgdaFunction{T2M}\AgdaSpace{}%
\AgdaBound{p}\<%
\end{code}}
\newcommand{\MtoT}{
\begin{code}%
\>[0]\AgdaFunction{M2T}\AgdaSpace{}%
\AgdaSymbol{:}\AgdaSpace{}%
\AgdaDatatype{MutualOrd}\AgdaSpace{}%
\AgdaSymbol{→}\AgdaSpace{}%
\AgdaDatatype{Tree}\<%
\\
\>[0]\AgdaFunction{M2T}%
\>[5]\AgdaInductiveConstructor{𝟎}%
\>[21]\AgdaSymbol{=}\AgdaSpace{}%
\AgdaInductiveConstructor{𝟎}\<%
\\
\>[0]\AgdaFunction{M2T}\AgdaSpace{}%
\AgdaSymbol{(}\AgdaOperator{\AgdaInductiveConstructor{ω\textasciicircum{}}}\AgdaSpace{}%
\AgdaBound{a}\AgdaSpace{}%
\AgdaOperator{\AgdaInductiveConstructor{+}}\AgdaSpace{}%
\AgdaBound{b}\AgdaSpace{}%
\AgdaOperator{\AgdaInductiveConstructor{[}}\AgdaSpace{}%
\AgdaSymbol{\AgdaUnderscore{}}\AgdaSpace{}%
\AgdaOperator{\AgdaInductiveConstructor{]}}\AgdaSymbol{)}\AgdaSpace{}%
\AgdaSymbol{=}\AgdaSpace{}%
\AgdaOperator{\AgdaInductiveConstructor{ω\textasciicircum{}}}\AgdaSpace{}%
\AgdaSymbol{(}\AgdaFunction{M2T}\AgdaSpace{}%
\AgdaBound{a}\AgdaSymbol{)}\AgdaSpace{}%
\AgdaOperator{\AgdaInductiveConstructor{+}}\AgdaSpace{}%
\AgdaSymbol{(}\AgdaFunction{M2T}\AgdaSpace{}%
\AgdaBound{b}\AgdaSymbol{)}\<%
\end{code}}
\newcommand{\MtoSLemmas}{
\begin{code}%
\>[0]\AgdaFunction{M2T[<]}%
\>[256I]\AgdaSymbol{:}\AgdaSpace{}%
\AgdaSymbol{\{}\AgdaBound{a}\AgdaSpace{}%
\AgdaBound{b}\AgdaSpace{}%
\AgdaSymbol{:}\AgdaSpace{}%
\AgdaDatatype{MutualOrd}\AgdaSymbol{\}}\<%
\\
\>[.][@{}l@{}]\<[256I]%
\>[7]\AgdaSymbol{→}\AgdaSpace{}%
\AgdaBound{a}\AgdaSpace{}%
\AgdaOperator{\AgdaDatatype{M.<}}\AgdaSpace{}%
\AgdaBound{b}\AgdaSpace{}%
\AgdaSymbol{→}\AgdaSpace{}%
\AgdaFunction{M2T}\AgdaSpace{}%
\AgdaBound{a}\AgdaSpace{}%
\AgdaOperator{\AgdaDatatype{S.<}}\AgdaSpace{}%
\AgdaFunction{M2T}\AgdaSpace{}%
\AgdaBound{b}\<%
\\
\>[0]\AgdaFunction{M2T[≥fst]}%
\>[270I]\AgdaSymbol{:}\AgdaSpace{}%
\AgdaSymbol{\{}\AgdaBound{a}\AgdaSpace{}%
\AgdaSymbol{:}\AgdaSpace{}%
\AgdaDatatype{MutualOrd}\AgdaSymbol{\}}\AgdaSpace{}%
\AgdaSymbol{(}\AgdaBound{b}\AgdaSpace{}%
\AgdaSymbol{:}\AgdaSpace{}%
\AgdaDatatype{MutualOrd}\AgdaSymbol{)}\<%
\\
\>[.][@{}l@{}]\<[270I]%
\>[10]\AgdaSymbol{→}\AgdaSpace{}%
\AgdaBound{a}\AgdaSpace{}%
\AgdaOperator{\AgdaFunction{M.≥}}\AgdaSpace{}%
\AgdaFunction{M.fst}\AgdaSpace{}%
\AgdaBound{b}\AgdaSpace{}%
\AgdaSymbol{→}\AgdaSpace{}%
\AgdaFunction{M2T}\AgdaSpace{}%
\AgdaBound{a}\AgdaSpace{}%
\AgdaOperator{\AgdaFunction{S.≥}}\AgdaSpace{}%
\AgdaFunction{S.fst}\AgdaSpace{}%
\AgdaSymbol{(}\AgdaFunction{M2T}\AgdaSpace{}%
\AgdaBound{b}\AgdaSymbol{)}\<%
\end{code}}
\newcommand{\isCNFMtoT}{
\begin{code}%
\>[0]\AgdaFunction{isCNF[M2T]}\AgdaSpace{}%
\AgdaSymbol{:}\AgdaSpace{}%
\AgdaSymbol{(}\AgdaBound{a}\AgdaSpace{}%
\AgdaSymbol{:}\AgdaSpace{}%
\AgdaDatatype{MutualOrd}\AgdaSymbol{)}\AgdaSpace{}%
\AgdaSymbol{→}\AgdaSpace{}%
\AgdaDatatype{isCNF}\AgdaSpace{}%
\AgdaSymbol{(}\AgdaFunction{M2T}\AgdaSpace{}%
\AgdaBound{a}\AgdaSymbol{)}\<%
\\
\>[0]\AgdaFunction{isCNF[M2T]}\AgdaSpace{}%
\AgdaInductiveConstructor{𝟎}\AgdaSpace{}%
\AgdaSymbol{=}\AgdaSpace{}%
\AgdaInductiveConstructor{𝟎IsCNF}\<%
\\
\>[0]\AgdaFunction{isCNF[M2T]}\AgdaSpace{}%
\AgdaSymbol{(}\AgdaOperator{\AgdaInductiveConstructor{ω\textasciicircum{}}}\AgdaSpace{}%
\AgdaBound{a}\AgdaSpace{}%
\AgdaOperator{\AgdaInductiveConstructor{+}}\AgdaSpace{}%
\AgdaBound{b}\AgdaSpace{}%
\AgdaOperator{\AgdaInductiveConstructor{[}}\AgdaSpace{}%
\AgdaBound{r}\AgdaSpace{}%
\AgdaOperator{\AgdaInductiveConstructor{]}}\AgdaSymbol{)}\AgdaSpace{}%
\AgdaSymbol{=}\<%
\\
\>[0][@{}l@{\AgdaIndent{0}}]%
\>[1]\AgdaInductiveConstructor{ω\textasciicircum{}+IsCNF}%
\>[357I]\AgdaSymbol{(}\AgdaFunction{isCNF[M2T]}\AgdaSpace{}%
\AgdaBound{a}\AgdaSymbol{)}\AgdaSpace{}%
\AgdaSymbol{(}\AgdaFunction{isCNF[M2T]}\AgdaSpace{}%
\AgdaBound{b}\AgdaSymbol{)}\<%
\\
\>[.][@{}l@{}]\<[357I]%
\>[10]\AgdaSymbol{(}\AgdaFunction{M2T[≥fst]}\AgdaSpace{}%
\AgdaBound{b}\AgdaSpace{}%
\AgdaBound{r}\AgdaSymbol{)}\<%
\end{code}}
\newcommand{\MtoS}{
\begin{code}%
\>[0]\AgdaFunction{M2S}\AgdaSpace{}%
\AgdaSymbol{:}\AgdaSpace{}%
\AgdaDatatype{MutualOrd}\AgdaSpace{}%
\AgdaSymbol{→}\AgdaSpace{}%
\AgdaFunction{SigmaOrd}\<%
\\
\>[0]\AgdaFunction{M2S}\AgdaSpace{}%
\AgdaBound{a}\AgdaSpace{}%
\AgdaSymbol{=}\AgdaSpace{}%
\AgdaSymbol{(}\AgdaFunction{M2T}\AgdaSpace{}%
\AgdaBound{a}\AgdaSpace{}%
\AgdaOperator{\AgdaInductiveConstructor{,}}\AgdaSpace{}%
\AgdaFunction{isCNF[M2T]}\AgdaSpace{}%
\AgdaBound{a}\AgdaSymbol{)}\<%
\end{code}}
\newcommand{\SMids}{
\begin{code}%
\>[0]\AgdaFunction{S2M2T=pr₁}\AgdaSpace{}%
\AgdaSymbol{:}\AgdaSpace{}%
\AgdaSymbol{(}\AgdaBound{a}\AgdaSpace{}%
\AgdaSymbol{:}\AgdaSpace{}%
\AgdaFunction{SigmaOrd}\AgdaSymbol{)}\AgdaSpace{}%
\AgdaSymbol{→}\AgdaSpace{}%
\AgdaFunction{M2T}\AgdaSpace{}%
\AgdaSymbol{(}\AgdaFunction{S2M}\AgdaSpace{}%
\AgdaBound{a}\AgdaSymbol{)}\AgdaSpace{}%
\AgdaOperator{\AgdaFunction{≡}}\AgdaSpace{}%
\AgdaField{pr₁}\AgdaSpace{}%
\AgdaBound{a}\<%
\\
\>[0]\AgdaFunction{S2M2T=pr₁}\AgdaSpace{}%
\AgdaSymbol{(}\AgdaInductiveConstructor{𝟎}\AgdaSpace{}%
\AgdaOperator{\AgdaInductiveConstructor{,}}\AgdaSpace{}%
\AgdaInductiveConstructor{𝟎IsCNF}\AgdaSymbol{)}\AgdaSpace{}%
\AgdaSymbol{=}\AgdaSpace{}%
\AgdaFunction{refl}\<%
\\
\>[0]\AgdaFunction{S2M2T=pr₁}\AgdaSpace{}%
\AgdaSymbol{(}\AgdaOperator{\AgdaInductiveConstructor{ω\textasciicircum{}}}\AgdaSpace{}%
\AgdaBound{a}\AgdaSpace{}%
\AgdaOperator{\AgdaInductiveConstructor{+}}\AgdaSpace{}%
\AgdaBound{b}\AgdaSpace{}%
\AgdaOperator{\AgdaInductiveConstructor{,}}\AgdaSpace{}%
\AgdaInductiveConstructor{ω\textasciicircum{}+IsCNF}\AgdaSpace{}%
\AgdaBound{p}\AgdaSpace{}%
\AgdaBound{q}\AgdaSpace{}%
\AgdaBound{r}\AgdaSymbol{)}\AgdaSpace{}%
\AgdaSymbol{=}\<%
\\
\>[0][@{}l@{\AgdaIndent{0}}]%
\>[5]\AgdaFunction{cong₂}\AgdaSpace{}%
\AgdaOperator{\AgdaInductiveConstructor{ω\textasciicircum{}\AgdaUnderscore{}+\AgdaUnderscore{}}}\AgdaSpace{}%
\AgdaSymbol{(}\AgdaFunction{S2M2T=pr₁}\AgdaSpace{}%
\AgdaSymbol{(}\AgdaBound{a}\AgdaSpace{}%
\AgdaOperator{\AgdaInductiveConstructor{,}}\AgdaSpace{}%
\AgdaBound{p}\AgdaSymbol{))}\AgdaSpace{}%
\AgdaSymbol{(}\AgdaFunction{S2M2T=pr₁}\AgdaSpace{}%
\AgdaSymbol{(}\AgdaBound{b}\AgdaSpace{}%
\AgdaOperator{\AgdaInductiveConstructor{,}}\AgdaSpace{}%
\AgdaBound{q}\AgdaSymbol{))}\<%
\\
\\[\AgdaEmptyExtraSkip]%
\>[0]\AgdaFunction{S2M2S=id}\AgdaSpace{}%
\AgdaSymbol{:}\AgdaSpace{}%
\AgdaSymbol{(}\AgdaBound{a}\AgdaSpace{}%
\AgdaSymbol{:}\AgdaSpace{}%
\AgdaFunction{SigmaOrd}\AgdaSymbol{)}\AgdaSpace{}%
\AgdaSymbol{→}\AgdaSpace{}%
\AgdaFunction{M2S}\AgdaSpace{}%
\AgdaSymbol{(}\AgdaFunction{S2M}\AgdaSpace{}%
\AgdaBound{a}\AgdaSymbol{)}\AgdaSpace{}%
\AgdaOperator{\AgdaFunction{≡}}\AgdaSpace{}%
\AgdaBound{a}\<%
\\
\>[0]\AgdaFunction{S2M2S=id}\AgdaSpace{}%
\AgdaBound{a}\AgdaSpace{}%
\AgdaSymbol{=}\AgdaSpace{}%
\AgdaFunction{SigmaOrd⁼}\AgdaSpace{}%
\AgdaSymbol{(}\AgdaFunction{S2M2T=pr₁}\AgdaSpace{}%
\AgdaBound{a}\AgdaSymbol{)}\<%
\\
\\[\AgdaEmptyExtraSkip]%
\>[0]\AgdaFunction{M2S2M=id}\AgdaSpace{}%
\AgdaSymbol{:}\AgdaSpace{}%
\AgdaSymbol{(}\AgdaBound{a}\AgdaSpace{}%
\AgdaSymbol{:}\AgdaSpace{}%
\AgdaDatatype{MutualOrd}\AgdaSymbol{)}\AgdaSpace{}%
\AgdaSymbol{→}\AgdaSpace{}%
\AgdaFunction{S2M}\AgdaSpace{}%
\AgdaSymbol{(}\AgdaFunction{M2S}\AgdaSpace{}%
\AgdaBound{a}\AgdaSymbol{)}\AgdaSpace{}%
\AgdaOperator{\AgdaFunction{≡}}\AgdaSpace{}%
\AgdaBound{a}\<%
\\
\>[0]\AgdaFunction{M2S2M=id}\AgdaSpace{}%
\AgdaInductiveConstructor{𝟎}\AgdaSpace{}%
\AgdaSymbol{=}\AgdaSpace{}%
\AgdaFunction{refl}\<%
\\
\>[0]\AgdaFunction{M2S2M=id}\AgdaSpace{}%
\AgdaSymbol{(}\AgdaOperator{\AgdaInductiveConstructor{ω\textasciicircum{}}}\AgdaSpace{}%
\AgdaBound{a}\AgdaSpace{}%
\AgdaOperator{\AgdaInductiveConstructor{+}}\AgdaSpace{}%
\AgdaBound{b}\AgdaSpace{}%
\AgdaOperator{\AgdaInductiveConstructor{[}}\AgdaSpace{}%
\AgdaSymbol{\AgdaUnderscore{}}\AgdaSpace{}%
\AgdaOperator{\AgdaInductiveConstructor{]}}\AgdaSymbol{)}\AgdaSpace{}%
\AgdaSymbol{=}\<%
\\
\>[0][@{}l@{\AgdaIndent{0}}]%
\>[5]\AgdaFunction{MutualOrd⁼}\AgdaSpace{}%
\AgdaSymbol{(}\AgdaFunction{M2S2M=id}\AgdaSpace{}%
\AgdaBound{a}\AgdaSymbol{)}\AgdaSpace{}%
\AgdaSymbol{(}\AgdaFunction{M2S2M=id}\AgdaSpace{}%
\AgdaBound{b}\AgdaSymbol{)}\<%
\end{code}}
\newcommand{\eqSM}{
\begin{code}[inline]%
\>[0]\AgdaFunction{S≃M}\AgdaSpace{}%
\AgdaSymbol{:}\AgdaSpace{}%
\AgdaFunction{SigmaOrd}\AgdaSpace{}%
\AgdaOperator{\AgdaFunction{≃}}\AgdaSpace{}%
\AgdaDatatype{MutualOrd}\<%
\end{code}}
\newcommand{\pathSM}{
\begin{code}[inline]%
\>[0]\AgdaFunction{S≡M}\AgdaSpace{}%
\AgdaSymbol{:}\AgdaSpace{}%
\AgdaFunction{SigmaOrd}\AgdaSpace{}%
\AgdaOperator{\AgdaFunction{≡}}\AgdaSpace{}%
\AgdaDatatype{MutualOrd}\<%
\end{code}}
\newcommand{\MtoH}{
\begin{code}%
\>[0]\AgdaFunction{M2H}\AgdaSpace{}%
\AgdaSymbol{:}\AgdaSpace{}%
\AgdaDatatype{MutualOrd}\AgdaSpace{}%
\AgdaSymbol{→}\AgdaSpace{}%
\AgdaDatatype{HITOrd}\<%
\\
\>[0]\AgdaFunction{M2H}\AgdaSpace{}%
\AgdaInductiveConstructor{𝟎}\AgdaSpace{}%
\AgdaSymbol{=}\AgdaSpace{}%
\AgdaInductiveConstructor{𝟎}\<%
\\
\>[0]\AgdaFunction{M2H}\AgdaSpace{}%
\AgdaSymbol{(}\AgdaOperator{\AgdaInductiveConstructor{ω\textasciicircum{}}}\AgdaSpace{}%
\AgdaBound{a}\AgdaSpace{}%
\AgdaOperator{\AgdaInductiveConstructor{+}}\AgdaSpace{}%
\AgdaBound{b}\AgdaSpace{}%
\AgdaOperator{\AgdaInductiveConstructor{[}}\AgdaSpace{}%
\AgdaSymbol{\AgdaUnderscore{}}\AgdaSpace{}%
\AgdaOperator{\AgdaInductiveConstructor{]}}\AgdaSymbol{)}\AgdaSpace{}%
\AgdaSymbol{=}\AgdaSpace{}%
\AgdaOperator{\AgdaInductiveConstructor{ω\textasciicircum{}}}\AgdaSpace{}%
\AgdaSymbol{(}\AgdaFunction{M2H}\AgdaSpace{}%
\AgdaBound{a}\AgdaSymbol{)}\AgdaSpace{}%
\AgdaOperator{\AgdaInductiveConstructor{⊕}}\AgdaSpace{}%
\AgdaSymbol{(}\AgdaFunction{M2H}\AgdaSpace{}%
\AgdaBound{b}\AgdaSymbol{)}\<%
\end{code}}
\newcommand{\insertType}{
\begin{code}%
\>[0]\AgdaFunction{insert}\AgdaSpace{}%
\AgdaSymbol{:}\AgdaSpace{}%
\AgdaDatatype{MutualOrd}\AgdaSpace{}%
\AgdaSymbol{→}\AgdaSpace{}%
\AgdaDatatype{MutualOrd}\AgdaSpace{}%
\AgdaSymbol{→}\AgdaSpace{}%
\AgdaDatatype{MutualOrd}\<%
\\
\>[0]\AgdaFunction{≥fst-insert}%
\>[494I]\AgdaSymbol{:}\AgdaSpace{}%
\AgdaSymbol{\{}\AgdaBound{a}\AgdaSpace{}%
\AgdaBound{b}\AgdaSpace{}%
\AgdaSymbol{:}\AgdaSpace{}%
\AgdaDatatype{MutualOrd}\AgdaSymbol{\}}\AgdaSpace{}%
\AgdaSymbol{(}\AgdaBound{c}\AgdaSpace{}%
\AgdaSymbol{:}\AgdaSpace{}%
\AgdaDatatype{MutualOrd}\AgdaSymbol{)}\<%
\\
\>[.][@{}l@{}]\<[494I]%
\>[12]\AgdaSymbol{→}\AgdaSpace{}%
\AgdaBound{b}\AgdaSpace{}%
\AgdaOperator{\AgdaFunction{M.≥}}\AgdaSpace{}%
\AgdaFunction{M.fst}\AgdaSpace{}%
\AgdaBound{c}\AgdaSpace{}%
\AgdaSymbol{→}\AgdaSpace{}%
\AgdaBound{a}\AgdaSpace{}%
\AgdaOperator{\AgdaDatatype{M.<}}\AgdaSpace{}%
\AgdaBound{b}\<%
\\
\>[12]\AgdaSymbol{→}\AgdaSpace{}%
\AgdaBound{b}\AgdaSpace{}%
\AgdaOperator{\AgdaFunction{M.≥}}\AgdaSpace{}%
\AgdaFunction{M.fst}\AgdaSpace{}%
\AgdaSymbol{(}\AgdaFunction{insert}\AgdaSpace{}%
\AgdaBound{a}\AgdaSpace{}%
\AgdaBound{c}\AgdaSymbol{)}\<%
\end{code}}
\newcommand{\insertDef}{
\begin{code}%
\>[0]\AgdaFunction{insert}\AgdaSpace{}%
\AgdaBound{a}\AgdaSpace{}%
\AgdaInductiveConstructor{𝟎}\AgdaSpace{}%
\AgdaSymbol{=}\AgdaSpace{}%
\AgdaOperator{\AgdaInductiveConstructor{ω\textasciicircum{}}}\AgdaSpace{}%
\AgdaBound{a}\AgdaSpace{}%
\AgdaOperator{\AgdaInductiveConstructor{+}}\AgdaSpace{}%
\AgdaInductiveConstructor{𝟎}\AgdaSpace{}%
\AgdaOperator{\AgdaInductiveConstructor{[}}\AgdaSpace{}%
\AgdaFunction{M.≥𝟎}\AgdaSpace{}%
\AgdaOperator{\AgdaInductiveConstructor{]}}\<%
\\
\>[0]\AgdaFunction{insert}\AgdaSpace{}%
\AgdaBound{a}\AgdaSpace{}%
\AgdaSymbol{(}\AgdaOperator{\AgdaInductiveConstructor{ω\textasciicircum{}}}\AgdaSpace{}%
\AgdaBound{b}\AgdaSpace{}%
\AgdaOperator{\AgdaInductiveConstructor{+}}\AgdaSpace{}%
\AgdaBound{c}\AgdaSpace{}%
\AgdaOperator{\AgdaInductiveConstructor{[}}\AgdaSpace{}%
\AgdaBound{r}\AgdaSpace{}%
\AgdaOperator{\AgdaInductiveConstructor{]}}\AgdaSymbol{)}\AgdaSpace{}%
\AgdaKeyword{with}\AgdaSpace{}%
\AgdaFunction{<-tri}\AgdaSpace{}%
\AgdaBound{a}\AgdaSpace{}%
\AgdaBound{b}\<%
\\
\>[0]\AgdaSymbol{...}\AgdaSpace{}%
\AgdaSymbol{|}\AgdaSpace{}%
\AgdaInductiveConstructor{inj₁}\AgdaSpace{}%
\AgdaBound{a<b}%
\>[16]\AgdaSymbol{=}\AgdaSpace{}%
\AgdaOperator{\AgdaInductiveConstructor{ω\textasciicircum{}}}\AgdaSpace{}%
\AgdaBound{b}\AgdaSpace{}%
\AgdaOperator{\AgdaInductiveConstructor{+}}\AgdaSpace{}%
\AgdaFunction{insert}\AgdaSpace{}%
\AgdaBound{a}\AgdaSpace{}%
\AgdaBound{c}\AgdaSpace{}%
\AgdaOperator{\AgdaInductiveConstructor{[}}\AgdaSpace{}%
\AgdaFunction{≥fst-insert}\AgdaSpace{}%
\AgdaBound{c}\AgdaSpace{}%
\AgdaBound{r}\AgdaSpace{}%
\AgdaBound{a<b}\AgdaSpace{}%
\AgdaOperator{\AgdaInductiveConstructor{]}}\<%
\\
\>[0]\AgdaSymbol{...}\AgdaSpace{}%
\AgdaSymbol{|}\AgdaSpace{}%
\AgdaInductiveConstructor{inj₂}\AgdaSpace{}%
\AgdaBound{a≥b}%
\>[16]\AgdaSymbol{=}\AgdaSpace{}%
\AgdaOperator{\AgdaInductiveConstructor{ω\textasciicircum{}}}\AgdaSpace{}%
\AgdaBound{a}\AgdaSpace{}%
\AgdaOperator{\AgdaInductiveConstructor{+}}\AgdaSpace{}%
\AgdaOperator{\AgdaInductiveConstructor{ω\textasciicircum{}}}\AgdaSpace{}%
\AgdaBound{b}\AgdaSpace{}%
\AgdaOperator{\AgdaInductiveConstructor{+}}\AgdaSpace{}%
\AgdaBound{c}\AgdaSpace{}%
\AgdaOperator{\AgdaInductiveConstructor{[}}\AgdaSpace{}%
\AgdaBound{r}\AgdaSpace{}%
\AgdaOperator{\AgdaInductiveConstructor{]}}\AgdaSpace{}%
\AgdaOperator{\AgdaInductiveConstructor{[}}\AgdaSpace{}%
\AgdaBound{a≥b}\AgdaSpace{}%
\AgdaOperator{\AgdaInductiveConstructor{]}}\<%
\end{code}}
\newcommand{\fstInsert}{
\begin{code}%
\>[0]\AgdaFunction{≥fst-insert}\AgdaSpace{}%
\AgdaSymbol{\{}\AgdaBound{a}\AgdaSymbol{\}}\AgdaSpace{}%
\AgdaInductiveConstructor{𝟎}\AgdaSpace{}%
\AgdaSymbol{\AgdaUnderscore{}}\AgdaSpace{}%
\AgdaBound{a<b}\AgdaSpace{}%
\AgdaSymbol{=}\AgdaSpace{}%
\AgdaInductiveConstructor{inj₁}\AgdaSpace{}%
\AgdaBound{a<b}\<%
\\
\>[0]\AgdaFunction{≥fst-insert}\AgdaSpace{}%
\AgdaSymbol{\{}\AgdaBound{a}\AgdaSymbol{\}}\AgdaSpace{}%
\AgdaSymbol{(}\AgdaOperator{\AgdaInductiveConstructor{ω\textasciicircum{}}}\AgdaSpace{}%
\AgdaBound{c}\AgdaSpace{}%
\AgdaOperator{\AgdaInductiveConstructor{+}}\AgdaSpace{}%
\AgdaBound{d}\AgdaSpace{}%
\AgdaOperator{\AgdaInductiveConstructor{[}}\AgdaSpace{}%
\AgdaSymbol{\AgdaUnderscore{}}\AgdaSpace{}%
\AgdaOperator{\AgdaInductiveConstructor{]}}\AgdaSymbol{)}\AgdaSpace{}%
\AgdaBound{b≥c}\AgdaSpace{}%
\AgdaBound{a<b}\AgdaSpace{}%
\AgdaKeyword{with}\AgdaSpace{}%
\AgdaFunction{<-tri}\AgdaSpace{}%
\AgdaBound{a}\AgdaSpace{}%
\AgdaBound{c}\<%
\\
\>[0]\AgdaSymbol{...}\AgdaSpace{}%
\AgdaSymbol{|}\AgdaSpace{}%
\AgdaInductiveConstructor{inj₁}\AgdaSpace{}%
\AgdaBound{a<c}%
\>[16]\AgdaSymbol{=}\AgdaSpace{}%
\AgdaBound{b≥c}\<%
\\
\>[0]\AgdaSymbol{...}\AgdaSpace{}%
\AgdaSymbol{|}\AgdaSpace{}%
\AgdaInductiveConstructor{inj₂}\AgdaSpace{}%
\AgdaBound{a≥c}%
\>[16]\AgdaSymbol{=}\AgdaSpace{}%
\AgdaInductiveConstructor{inj₁}\AgdaSpace{}%
\AgdaBound{a<b}\<%
\end{code}}
\newcommand{\insertSwap}{
\begin{code}%
\>[0]\AgdaFunction{insert-swap}%
\>[599I]\AgdaSymbol{:}\AgdaSpace{}%
\AgdaSymbol{(}\AgdaBound{x}\AgdaSpace{}%
\AgdaBound{y}\AgdaSpace{}%
\AgdaBound{z}\AgdaSpace{}%
\AgdaSymbol{:}\AgdaSpace{}%
\AgdaDatatype{MutualOrd}\AgdaSymbol{)}\<%
\\
\>[.][@{}l@{}]\<[599I]%
\>[12]\AgdaSymbol{→}\AgdaSpace{}%
\AgdaFunction{insert}\AgdaSpace{}%
\AgdaBound{x}\AgdaSpace{}%
\AgdaSymbol{(}\AgdaFunction{insert}\AgdaSpace{}%
\AgdaBound{y}\AgdaSpace{}%
\AgdaBound{z}\AgdaSymbol{)}\AgdaSpace{}%
\AgdaOperator{\AgdaFunction{≡}}\AgdaSpace{}%
\AgdaFunction{insert}\AgdaSpace{}%
\AgdaBound{y}\AgdaSpace{}%
\AgdaSymbol{(}\AgdaFunction{insert}\AgdaSpace{}%
\AgdaBound{x}\AgdaSpace{}%
\AgdaBound{z}\AgdaSymbol{)}\<%
\end{code}}
\newcommand{\HtoM}{
\begin{code}%
\>[0]\AgdaFunction{H2M}\AgdaSpace{}%
\AgdaSymbol{:}\AgdaSpace{}%
\AgdaDatatype{HITOrd}\AgdaSpace{}%
\AgdaSymbol{→}\AgdaSpace{}%
\AgdaDatatype{MutualOrd}\<%
\\
\>[0]\AgdaFunction{H2M}\AgdaSpace{}%
\AgdaSymbol{=}\AgdaSpace{}%
\AgdaFunction{rec}\AgdaSpace{}%
\AgdaFunction{MutualOrdIsSet}\AgdaSpace{}%
\AgdaInductiveConstructor{𝟎}\AgdaSpace{}%
\AgdaFunction{insert}\AgdaSpace{}%
\AgdaFunction{insert-swap}\<%
\end{code}}
\newcommand{\insertPlus}{
\begin{code}%
\>[0]\AgdaFunction{insert-+}%
\>[1407I]\AgdaSymbol{:}\AgdaSpace{}%
\AgdaSymbol{(}\AgdaBound{a}\AgdaSpace{}%
\AgdaBound{b}\AgdaSpace{}%
\AgdaSymbol{:}\AgdaSpace{}%
\AgdaDatatype{MutualOrd}\AgdaSymbol{)}\AgdaSpace{}%
\AgdaSymbol{(}\AgdaBound{r}\AgdaSpace{}%
\AgdaSymbol{:}\AgdaSpace{}%
\AgdaBound{a}\AgdaSpace{}%
\AgdaOperator{\AgdaFunction{M.≥}}\AgdaSpace{}%
\AgdaFunction{M.fst}\AgdaSpace{}%
\AgdaBound{b}\AgdaSymbol{)}\<%
\\
\>[.][@{}l@{}]\<[1407I]%
\>[9]\AgdaSymbol{→}\AgdaSpace{}%
\AgdaFunction{insert}\AgdaSpace{}%
\AgdaBound{a}\AgdaSpace{}%
\AgdaBound{b}\AgdaSpace{}%
\AgdaOperator{\AgdaFunction{≡}}\AgdaSpace{}%
\AgdaOperator{\AgdaInductiveConstructor{ω\textasciicircum{}}}\AgdaSpace{}%
\AgdaBound{a}\AgdaSpace{}%
\AgdaOperator{\AgdaInductiveConstructor{+}}\AgdaSpace{}%
\AgdaBound{b}\AgdaSpace{}%
\AgdaOperator{\AgdaInductiveConstructor{[}}\AgdaSpace{}%
\AgdaBound{r}\AgdaSpace{}%
\AgdaOperator{\AgdaInductiveConstructor{]}}\<%
\end{code}}
\newcommand{\MtoHtoM}{
\begin{code}%
\>[0]\AgdaFunction{M2H2M=id}\AgdaSpace{}%
\AgdaSymbol{:}\AgdaSpace{}%
\AgdaSymbol{(}\AgdaBound{a}\AgdaSpace{}%
\AgdaSymbol{:}\AgdaSpace{}%
\AgdaDatatype{MutualOrd}\AgdaSymbol{)}\AgdaSpace{}%
\AgdaSymbol{→}\AgdaSpace{}%
\AgdaFunction{H2M}\AgdaSpace{}%
\AgdaSymbol{(}\AgdaFunction{M2H}\AgdaSpace{}%
\AgdaBound{a}\AgdaSymbol{)}\AgdaSpace{}%
\AgdaOperator{\AgdaFunction{≡}}\AgdaSpace{}%
\AgdaBound{a}\<%
\\
\>[0]\AgdaFunction{M2H2M=id}\AgdaSpace{}%
\AgdaInductiveConstructor{𝟎}\AgdaSpace{}%
\AgdaSymbol{=}\AgdaSpace{}%
\AgdaFunction{refl}\<%
\\
\>[0]\AgdaFunction{M2H2M=id}\AgdaSpace{}%
\AgdaSymbol{(}\AgdaOperator{\AgdaInductiveConstructor{ω\textasciicircum{}}}\AgdaSpace{}%
\AgdaBound{a}\AgdaSpace{}%
\AgdaOperator{\AgdaInductiveConstructor{+}}\AgdaSpace{}%
\AgdaBound{b}\AgdaSpace{}%
\AgdaOperator{\AgdaInductiveConstructor{[}}\AgdaSpace{}%
\AgdaBound{r}\AgdaSpace{}%
\AgdaOperator{\AgdaInductiveConstructor{]}}\AgdaSymbol{)}\AgdaSpace{}%
\AgdaSymbol{=}\AgdaSpace{}%
\AgdaOperator{\AgdaFunction{begin}}\<%
\\
\>[0][@{}l@{\AgdaIndent{0}}]%
\>[4]\AgdaFunction{H2M}\AgdaSpace{}%
\AgdaSymbol{(}\AgdaFunction{M2H}\AgdaSpace{}%
\AgdaSymbol{(}\AgdaOperator{\AgdaInductiveConstructor{ω\textasciicircum{}}}\AgdaSpace{}%
\AgdaBound{a}\AgdaSpace{}%
\AgdaOperator{\AgdaInductiveConstructor{+}}\AgdaSpace{}%
\AgdaBound{b}\AgdaSpace{}%
\AgdaOperator{\AgdaInductiveConstructor{[}}\AgdaSpace{}%
\AgdaBound{r}\AgdaSpace{}%
\AgdaOperator{\AgdaInductiveConstructor{]}}\AgdaSymbol{))}\<%
\\
\>[0][@{}l@{\AgdaIndent{0}}]%
\>[2]\AgdaOperator{\AgdaFunction{≡⟨}}\AgdaSpace{}%
\AgdaFunction{refl}\AgdaSpace{}%
\AgdaOperator{\AgdaFunction{⟩}}\<%
\\
\>[2][@{}l@{\AgdaIndent{0}}]%
\>[4]\AgdaFunction{H2M}\AgdaSpace{}%
\AgdaSymbol{(}\AgdaOperator{\AgdaInductiveConstructor{ω\textasciicircum{}}}\AgdaSpace{}%
\AgdaSymbol{(}\AgdaFunction{M2H}\AgdaSpace{}%
\AgdaBound{a}\AgdaSymbol{)}\AgdaSpace{}%
\AgdaOperator{\AgdaInductiveConstructor{⊕}}\AgdaSpace{}%
\AgdaSymbol{(}\AgdaFunction{M2H}\AgdaSpace{}%
\AgdaBound{b}\AgdaSymbol{))}\<%
\\
\>[2]\AgdaOperator{\AgdaFunction{≡⟨}}\AgdaSpace{}%
\AgdaFunction{refl}\AgdaSpace{}%
\AgdaOperator{\AgdaFunction{⟩}}\<%
\\
\>[2][@{}l@{\AgdaIndent{0}}]%
\>[4]\AgdaFunction{insert}\AgdaSpace{}%
\AgdaSymbol{(}\AgdaFunction{H2M}\AgdaSpace{}%
\AgdaSymbol{(}\AgdaFunction{M2H}\AgdaSpace{}%
\AgdaBound{a}\AgdaSymbol{))}\AgdaSpace{}%
\AgdaSymbol{(}\AgdaFunction{H2M}\AgdaSpace{}%
\AgdaSymbol{(}\AgdaFunction{M2H}\AgdaSpace{}%
\AgdaBound{b}\AgdaSymbol{))}\<%
\\
\>[2]\AgdaOperator{\AgdaFunction{≡⟨}}\AgdaSpace{}%
\AgdaFunction{cong₂}\AgdaSpace{}%
\AgdaFunction{insert}\AgdaSpace{}%
\AgdaSymbol{(}\AgdaFunction{M2H2M=id}\AgdaSpace{}%
\AgdaBound{a}\AgdaSymbol{)}\AgdaSpace{}%
\AgdaSymbol{(}\AgdaFunction{M2H2M=id}\AgdaSpace{}%
\AgdaBound{b}\AgdaSymbol{)}\AgdaSpace{}%
\AgdaOperator{\AgdaFunction{⟩}}\<%
\\
\>[2][@{}l@{\AgdaIndent{0}}]%
\>[4]\AgdaFunction{insert}\AgdaSpace{}%
\AgdaBound{a}\AgdaSpace{}%
\AgdaBound{b}\<%
\\
\>[2]\AgdaOperator{\AgdaFunction{≡⟨}}\AgdaSpace{}%
\AgdaFunction{insert-+}\AgdaSpace{}%
\AgdaBound{a}\AgdaSpace{}%
\AgdaBound{b}\AgdaSpace{}%
\AgdaBound{r}\AgdaSpace{}%
\AgdaOperator{\AgdaFunction{⟩}}\<%
\\
\>[2][@{}l@{\AgdaIndent{0}}]%
\>[4]\AgdaOperator{\AgdaInductiveConstructor{ω\textasciicircum{}}}\AgdaSpace{}%
\AgdaBound{a}\AgdaSpace{}%
\AgdaOperator{\AgdaInductiveConstructor{+}}\AgdaSpace{}%
\AgdaBound{b}\AgdaSpace{}%
\AgdaOperator{\AgdaInductiveConstructor{[}}\AgdaSpace{}%
\AgdaBound{r}\AgdaSpace{}%
\AgdaOperator{\AgdaInductiveConstructor{]}}%
\>[20]\AgdaOperator{\AgdaFunction{∎}}\<%
\end{code}}
\newcommand{\insertHash}{
\begin{code}%
\>[0]\AgdaFunction{insert-⊕}%
\>[1549I]\AgdaSymbol{:}\AgdaSpace{}%
\AgdaSymbol{(}\AgdaBound{a}\AgdaSpace{}%
\AgdaBound{b}\AgdaSpace{}%
\AgdaSymbol{:}\AgdaSpace{}%
\AgdaDatatype{MutualOrd}\AgdaSymbol{)}\<%
\\
\>[.][@{}l@{}]\<[1549I]%
\>[9]\AgdaSymbol{→}\AgdaSpace{}%
\AgdaFunction{M2H}\AgdaSpace{}%
\AgdaSymbol{(}\AgdaFunction{insert}\AgdaSpace{}%
\AgdaBound{a}\AgdaSpace{}%
\AgdaBound{b}\AgdaSymbol{)}\AgdaSpace{}%
\AgdaOperator{\AgdaFunction{≡}}\AgdaSpace{}%
\AgdaOperator{\AgdaInductiveConstructor{ω\textasciicircum{}}}\AgdaSpace{}%
\AgdaSymbol{(}\AgdaFunction{M2H}\AgdaSpace{}%
\AgdaBound{a}\AgdaSymbol{)}\AgdaSpace{}%
\AgdaOperator{\AgdaInductiveConstructor{⊕}}\AgdaSpace{}%
\AgdaSymbol{(}\AgdaFunction{M2H}\AgdaSpace{}%
\AgdaBound{b}\AgdaSymbol{)}\<%
\end{code}}
\newcommand{\HtoMtoH}{
\begin{code}%
\>[0]\AgdaFunction{H2M2H=id}\AgdaSpace{}%
\AgdaSymbol{:}\AgdaSpace{}%
\AgdaSymbol{(}\AgdaBound{a}\AgdaSpace{}%
\AgdaSymbol{:}\AgdaSpace{}%
\AgdaDatatype{HITOrd}\AgdaSymbol{)}\AgdaSpace{}%
\AgdaSymbol{→}\AgdaSpace{}%
\AgdaFunction{M2H}\AgdaSpace{}%
\AgdaSymbol{(}\AgdaFunction{H2M}\AgdaSpace{}%
\AgdaBound{a}\AgdaSymbol{)}\AgdaSpace{}%
\AgdaOperator{\AgdaFunction{≡}}\AgdaSpace{}%
\AgdaBound{a}\<%
\\
\>[0]\AgdaFunction{H2M2H=id}\AgdaSpace{}%
\AgdaSymbol{=}\AgdaSpace{}%
\AgdaFunction{indProp}\AgdaSpace{}%
\AgdaFunction{P}\AgdaSpace{}%
\AgdaInductiveConstructor{trunc}\AgdaSpace{}%
\AgdaFunction{base}\AgdaSpace{}%
\AgdaFunction{step}\<%
\\
\>[0][@{}l@{\AgdaIndent{0}}]%
\>[1]\AgdaKeyword{where}\<%
\\
\>[1][@{}l@{\AgdaIndent{0}}]%
\>[2]\AgdaFunction{P}\AgdaSpace{}%
\AgdaSymbol{:}\AgdaSpace{}%
\AgdaDatatype{HITOrd}\AgdaSpace{}%
\AgdaSymbol{→}\AgdaSpace{}%
\AgdaFunction{Type₀}\<%
\\
\>[2]\AgdaFunction{P}\AgdaSpace{}%
\AgdaBound{x}\AgdaSpace{}%
\AgdaSymbol{=}\AgdaSpace{}%
\AgdaFunction{M2H}\AgdaSpace{}%
\AgdaSymbol{(}\AgdaFunction{H2M}\AgdaSpace{}%
\AgdaBound{x}\AgdaSymbol{)}\AgdaSpace{}%
\AgdaOperator{\AgdaFunction{≡}}\AgdaSpace{}%
\AgdaBound{x}\<%
\\
\>[2]\AgdaFunction{base}\AgdaSpace{}%
\AgdaSymbol{:}\AgdaSpace{}%
\AgdaFunction{P}\AgdaSpace{}%
\AgdaInductiveConstructor{𝟎}\<%
\\
\>[2]\AgdaFunction{base}\AgdaSpace{}%
\AgdaSymbol{=}\AgdaSpace{}%
\AgdaFunction{refl}\<%
\\
\>[2]\AgdaFunction{step}\AgdaSpace{}%
\AgdaSymbol{:}\AgdaSpace{}%
\AgdaSymbol{∀}\AgdaSpace{}%
\AgdaSymbol{\{}\AgdaBound{x}\AgdaSpace{}%
\AgdaBound{y}\AgdaSymbol{\}}\AgdaSpace{}%
\AgdaSymbol{→}\AgdaSpace{}%
\AgdaFunction{P}\AgdaSpace{}%
\AgdaBound{x}\AgdaSpace{}%
\AgdaSymbol{→}\AgdaSpace{}%
\AgdaFunction{P}\AgdaSpace{}%
\AgdaBound{y}\AgdaSpace{}%
\AgdaSymbol{→}\AgdaSpace{}%
\AgdaFunction{P}\AgdaSpace{}%
\AgdaSymbol{(}\AgdaOperator{\AgdaInductiveConstructor{ω\textasciicircum{}}}\AgdaSpace{}%
\AgdaBound{x}\AgdaSpace{}%
\AgdaOperator{\AgdaInductiveConstructor{⊕}}\AgdaSpace{}%
\AgdaBound{y}\AgdaSymbol{)}\<%
\\
\>[2]\AgdaFunction{step}\AgdaSpace{}%
\AgdaSymbol{\{}\AgdaBound{x}\AgdaSymbol{\}}\AgdaSpace{}%
\AgdaSymbol{\{}\AgdaBound{y}\AgdaSymbol{\}}\AgdaSpace{}%
\AgdaBound{p}\AgdaSpace{}%
\AgdaBound{q}\AgdaSpace{}%
\AgdaSymbol{=}\AgdaSpace{}%
\AgdaOperator{\AgdaFunction{begin}}\<%
\\
\>[2][@{}l@{\AgdaIndent{0}}]%
\>[5]\AgdaFunction{M2H}\AgdaSpace{}%
\AgdaSymbol{(}\AgdaFunction{H2M}\AgdaSpace{}%
\AgdaSymbol{(}\AgdaOperator{\AgdaInductiveConstructor{ω\textasciicircum{}}}\AgdaSpace{}%
\AgdaBound{x}\AgdaSpace{}%
\AgdaOperator{\AgdaInductiveConstructor{⊕}}\AgdaSpace{}%
\AgdaBound{y}\AgdaSymbol{))}\<%
\\
\>[2][@{}l@{\AgdaIndent{0}}]%
\>[3]\AgdaOperator{\AgdaFunction{≡⟨}}\AgdaSpace{}%
\AgdaFunction{insert-⊕}\AgdaSpace{}%
\AgdaSymbol{(}\AgdaFunction{H2M}\AgdaSpace{}%
\AgdaBound{x}\AgdaSymbol{)}\AgdaSpace{}%
\AgdaSymbol{(}\AgdaFunction{H2M}\AgdaSpace{}%
\AgdaBound{y}\AgdaSymbol{)}\AgdaSpace{}%
\AgdaOperator{\AgdaFunction{⟩}}\<%
\\
\>[3][@{}l@{\AgdaIndent{0}}]%
\>[5]\AgdaOperator{\AgdaInductiveConstructor{ω\textasciicircum{}}}\AgdaSpace{}%
\AgdaFunction{M2H}\AgdaSpace{}%
\AgdaSymbol{(}\AgdaFunction{H2M}\AgdaSpace{}%
\AgdaBound{x}\AgdaSymbol{)}\AgdaSpace{}%
\AgdaOperator{\AgdaInductiveConstructor{⊕}}\AgdaSpace{}%
\AgdaFunction{M2H}\AgdaSpace{}%
\AgdaSymbol{(}\AgdaFunction{H2M}\AgdaSpace{}%
\AgdaBound{y}\AgdaSymbol{)}\<%
\\
\>[3]\AgdaOperator{\AgdaFunction{≡⟨}}\AgdaSpace{}%
\AgdaFunction{cong₂}\AgdaSpace{}%
\AgdaOperator{\AgdaInductiveConstructor{ω\textasciicircum{}\AgdaUnderscore{}⊕\AgdaUnderscore{}}}\AgdaSpace{}%
\AgdaBound{p}\AgdaSpace{}%
\AgdaBound{q}\AgdaSpace{}%
\AgdaOperator{\AgdaFunction{⟩}}\<%
\\
\>[3][@{}l@{\AgdaIndent{0}}]%
\>[5]\AgdaOperator{\AgdaInductiveConstructor{ω\textasciicircum{}}}\AgdaSpace{}%
\AgdaBound{x}\AgdaSpace{}%
\AgdaOperator{\AgdaInductiveConstructor{⊕}}\AgdaSpace{}%
\AgdaBound{y}%
\>[15]\AgdaOperator{\AgdaFunction{∎}}\<%
\end{code}}
\newcommand{\eqMH}{
\begin{code}[inline]%
\>[0]\AgdaFunction{M≃H}\AgdaSpace{}%
\AgdaSymbol{:}\AgdaSpace{}%
\AgdaDatatype{MutualOrd}\AgdaSpace{}%
\AgdaOperator{\AgdaFunction{≃}}\AgdaSpace{}%
\AgdaDatatype{HITOrd}\<%
\end{code}}
\newcommand{\pathMH}{
\begin{code}[inline]%
\>[0]\AgdaFunction{M≡H}\AgdaSpace{}%
\AgdaSymbol{:}\AgdaSpace{}%
\AgdaDatatype{MutualOrd}\AgdaSpace{}%
\AgdaOperator{\AgdaFunction{≡}}\AgdaSpace{}%
\AgdaDatatype{HITOrd}\<%
\end{code}}
\newcommand{\TItype}{
\begin{code}%
\>[0]\AgdaFunction{TI}\AgdaSpace{}%
\AgdaSymbol{:}%
\>[60I]\AgdaSymbol{(}\AgdaBound{A}\AgdaSpace{}%
\AgdaSymbol{:}\AgdaSpace{}%
\AgdaFunction{Type}\AgdaSpace{}%
\AgdaGeneralizable{ℓ}\AgdaSymbol{)}\AgdaSpace{}%
\AgdaSymbol{→}\AgdaSpace{}%
\AgdaSymbol{(}\AgdaBound{A}\AgdaSpace{}%
\AgdaSymbol{→}\AgdaSpace{}%
\AgdaBound{A}\AgdaSpace{}%
\AgdaSymbol{→}\AgdaSpace{}%
\AgdaFunction{Type}\AgdaSpace{}%
\AgdaGeneralizable{ℓ'}\AgdaSymbol{)}\AgdaSpace{}%
\AgdaSymbol{→}\<%
\\
\>[.][@{}l@{}]\<[60I]%
\>[5]\AgdaSymbol{∀}\AgdaSpace{}%
\AgdaBound{ℓ''}\AgdaSpace{}%
\AgdaSymbol{→}\AgdaSpace{}%
\AgdaFunction{Type}\AgdaSpace{}%
\AgdaSymbol{(}\AgdaGeneralizable{ℓ}\AgdaSpace{}%
\AgdaOperator{\AgdaPrimitive{⊔}}\AgdaSpace{}%
\AgdaGeneralizable{ℓ'}\AgdaSpace{}%
\AgdaOperator{\AgdaPrimitive{⊔}}\AgdaSpace{}%
\AgdaPrimitive{lsuc}\AgdaSpace{}%
\AgdaBound{ℓ''}\AgdaSymbol{)}\<%
\\
\>[0]\AgdaFunction{TI}\AgdaSpace{}%
\AgdaBound{A}\AgdaSpace{}%
\AgdaOperator{\AgdaBound{\AgdaUnderscore{}<\AgdaUnderscore{}}}\AgdaSpace{}%
\AgdaBound{ℓ''}%
\>[14]\AgdaSymbol{=}\AgdaSpace{}%
\AgdaSymbol{(}\AgdaBound{P}\AgdaSpace{}%
\AgdaSymbol{:}\AgdaSpace{}%
\AgdaBound{A}\AgdaSpace{}%
\AgdaSymbol{→}\AgdaSpace{}%
\AgdaFunction{Type}\AgdaSpace{}%
\AgdaBound{ℓ''}\AgdaSymbol{)}\<%
\\
\>[14]\AgdaSymbol{→}\AgdaSpace{}%
\AgdaSymbol{(∀}\AgdaSpace{}%
\AgdaBound{x}\AgdaSpace{}%
\AgdaSymbol{→}\AgdaSpace{}%
\AgdaSymbol{(∀}\AgdaSpace{}%
\AgdaBound{y}\AgdaSpace{}%
\AgdaSymbol{→}\AgdaSpace{}%
\AgdaBound{y}\AgdaSpace{}%
\AgdaOperator{\AgdaBound{<}}\AgdaSpace{}%
\AgdaBound{x}\AgdaSpace{}%
\AgdaSymbol{→}\AgdaSpace{}%
\AgdaBound{P}\AgdaSpace{}%
\AgdaBound{y}\AgdaSymbol{)}\AgdaSpace{}%
\AgdaSymbol{→}\AgdaSpace{}%
\AgdaBound{P}\AgdaSpace{}%
\AgdaBound{x}\AgdaSymbol{)}\<%
\\
\>[14]\AgdaSymbol{→}\AgdaSpace{}%
\AgdaSymbol{∀}\AgdaSpace{}%
\AgdaBound{x}\AgdaSpace{}%
\AgdaSymbol{→}\AgdaSpace{}%
\AgdaBound{P}\AgdaSpace{}%
\AgdaBound{x}\<%
\end{code}}
\newcommand{\Acc}{
\begin{code}%
\>[0]\AgdaKeyword{module}\AgdaSpace{}%
\AgdaModule{Acc}\AgdaSpace{}%
\AgdaSymbol{(}\AgdaBound{A}\AgdaSpace{}%
\AgdaSymbol{:}\AgdaSpace{}%
\AgdaFunction{Type}\AgdaSpace{}%
\AgdaGeneralizable{ℓ}\AgdaSymbol{)}\AgdaSpace{}%
\AgdaSymbol{(}\AgdaOperator{\AgdaBound{\AgdaUnderscore{}<\AgdaUnderscore{}}}\AgdaSpace{}%
\AgdaSymbol{:}\AgdaSpace{}%
\AgdaBound{A}\AgdaSpace{}%
\AgdaSymbol{→}\AgdaSpace{}%
\AgdaBound{A}\AgdaSpace{}%
\AgdaSymbol{→}\AgdaSpace{}%
\AgdaFunction{Type}\AgdaSpace{}%
\AgdaGeneralizable{ℓ'}\AgdaSymbol{)}\AgdaSpace{}%
\AgdaKeyword{where}\<%
\\
\\[\AgdaEmptyExtraSkip]%
\>[0][@{}l@{\AgdaIndent{0}}]%
\>[1]\AgdaKeyword{data}\AgdaSpace{}%
\AgdaDatatype{isAccessible}\AgdaSpace{}%
\AgdaSymbol{(}\AgdaBound{x}\AgdaSpace{}%
\AgdaSymbol{:}\AgdaSpace{}%
\AgdaBound{A}\AgdaSymbol{)}\AgdaSpace{}%
\AgdaSymbol{:}\AgdaSpace{}%
\AgdaFunction{Type}\AgdaSpace{}%
\AgdaSymbol{(}\AgdaBound{ℓ}\AgdaSpace{}%
\AgdaOperator{\AgdaPrimitive{⊔}}\AgdaSpace{}%
\AgdaBound{ℓ'}\AgdaSymbol{)}\AgdaSpace{}%
\AgdaKeyword{where}\<%
\\
\>[1][@{}l@{\AgdaIndent{0}}]%
\>[2]\AgdaInductiveConstructor{next}\AgdaSpace{}%
\AgdaSymbol{:}\AgdaSpace{}%
\AgdaSymbol{(∀}\AgdaSpace{}%
\AgdaBound{y}\AgdaSpace{}%
\AgdaSymbol{→}\AgdaSpace{}%
\AgdaBound{y}\AgdaSpace{}%
\AgdaOperator{\AgdaBound{<}}\AgdaSpace{}%
\AgdaBound{x}\AgdaSpace{}%
\AgdaSymbol{→}\AgdaSpace{}%
\AgdaDatatype{isAccessible}\AgdaSpace{}%
\AgdaBound{y}\AgdaSymbol{)}\AgdaSpace{}%
\AgdaSymbol{→}\AgdaSpace{}%
\AgdaDatatype{isAccessible}\AgdaSpace{}%
\AgdaBound{x}\<%
\\
\\[\AgdaEmptyExtraSkip]%
\>[1]\AgdaFunction{accInd}%
\>[147I]\AgdaSymbol{:}\AgdaSpace{}%
\AgdaSymbol{(}\AgdaBound{P}\AgdaSpace{}%
\AgdaSymbol{:}\AgdaSpace{}%
\AgdaBound{A}\AgdaSpace{}%
\AgdaSymbol{→}\AgdaSpace{}%
\AgdaFunction{Type}\AgdaSpace{}%
\AgdaGeneralizable{ℓ''}\AgdaSymbol{)}\<%
\\
\>[.][@{}l@{}]\<[147I]%
\>[8]\AgdaSymbol{→}\AgdaSpace{}%
\AgdaSymbol{(∀}\AgdaSpace{}%
\AgdaBound{x}\AgdaSpace{}%
\AgdaSymbol{→}\AgdaSpace{}%
\AgdaSymbol{(∀}\AgdaSpace{}%
\AgdaBound{y}\AgdaSpace{}%
\AgdaSymbol{→}\AgdaSpace{}%
\AgdaBound{y}\AgdaSpace{}%
\AgdaOperator{\AgdaBound{<}}\AgdaSpace{}%
\AgdaBound{x}\AgdaSpace{}%
\AgdaSymbol{→}\AgdaSpace{}%
\AgdaBound{P}\AgdaSpace{}%
\AgdaBound{y}\AgdaSymbol{)}\AgdaSpace{}%
\AgdaSymbol{→}\AgdaSpace{}%
\AgdaBound{P}\AgdaSpace{}%
\AgdaBound{x}\AgdaSymbol{)}\<%
\\
\>[8]\AgdaSymbol{→}\AgdaSpace{}%
\AgdaSymbol{∀}\AgdaSpace{}%
\AgdaBound{x}\AgdaSpace{}%
\AgdaSymbol{→}\AgdaSpace{}%
\AgdaDatatype{isAccessible}\AgdaSpace{}%
\AgdaBound{x}\AgdaSpace{}%
\AgdaSymbol{→}\AgdaSpace{}%
\AgdaBound{P}\AgdaSpace{}%
\AgdaBound{x}\<%
\\
\>[1]\AgdaFunction{accInd}\AgdaSpace{}%
\AgdaBound{P}\AgdaSpace{}%
\AgdaBound{step}\AgdaSpace{}%
\AgdaBound{x}\AgdaSpace{}%
\AgdaSymbol{(}\AgdaInductiveConstructor{next}\AgdaSpace{}%
\AgdaBound{δ}\AgdaSymbol{)}\AgdaSpace{}%
\AgdaSymbol{=}\<%
\\
\>[1][@{}l@{\AgdaIndent{0}}]%
\>[3]\AgdaBound{step}\AgdaSpace{}%
\AgdaBound{x}\AgdaSpace{}%
\AgdaSymbol{(λ}\AgdaSpace{}%
\AgdaBound{y}\AgdaSpace{}%
\AgdaBound{r}\AgdaSpace{}%
\AgdaSymbol{→}\AgdaSpace{}%
\AgdaFunction{accInd}\AgdaSpace{}%
\AgdaBound{P}\AgdaSpace{}%
\AgdaBound{step}\AgdaSpace{}%
\AgdaBound{y}\AgdaSpace{}%
\AgdaSymbol{(}\AgdaBound{δ}\AgdaSpace{}%
\AgdaBound{y}\AgdaSpace{}%
\AgdaBound{r}\AgdaSymbol{))}\<%
\\
\\[\AgdaEmptyExtraSkip]%
\>[0]\AgdaKeyword{open}\AgdaSpace{}%
\AgdaModule{Acc}\AgdaSpace{}%
\AgdaDatatype{MutualOrd}\AgdaSpace{}%
\AgdaOperator{\AgdaDatatype{\AgdaUnderscore{}<\AgdaUnderscore{}}}\<%
\end{code}}
\newcommand{\zeroAcc}{
\begin{code}[inline]%
\>[0]\AgdaFunction{𝟎Acc}\AgdaSpace{}%
\AgdaSymbol{:}\AgdaSpace{}%
\AgdaDatatype{isAccessible}\AgdaSpace{}%
\AgdaInductiveConstructor{𝟎}\<%
\end{code}}
\newcommand{\WFlemmas}{
\begin{code}%
\>[0]\AgdaFunction{fstAcc}\AgdaSpace{}%
\AgdaSymbol{:}\AgdaSpace{}%
\AgdaSymbol{∀}\AgdaSpace{}%
\AgdaSymbol{\{}\AgdaBound{a}\AgdaSpace{}%
\AgdaBound{b}\AgdaSpace{}%
\AgdaBound{x}\AgdaSymbol{\}}\AgdaSpace{}%
\AgdaSymbol{→}\AgdaSpace{}%
\AgdaDatatype{isAccessible}\AgdaSpace{}%
\AgdaBound{a}\AgdaSpace{}%
\AgdaSymbol{→}\AgdaSpace{}%
\AgdaDatatype{isAccessible}\AgdaSpace{}%
\AgdaBound{b}\<%
\\
\>[0][@{}l@{\AgdaIndent{0}}]%
\>[2]\AgdaSymbol{→}\AgdaSpace{}%
\AgdaBound{x}\AgdaSpace{}%
\AgdaOperator{\AgdaDatatype{<}}\AgdaSpace{}%
\AgdaBound{a}\AgdaSpace{}%
\AgdaSymbol{→}\AgdaSpace{}%
\AgdaSymbol{(}\AgdaBound{r}\AgdaSpace{}%
\AgdaSymbol{:}\AgdaSpace{}%
\AgdaBound{x}\AgdaSpace{}%
\AgdaOperator{\AgdaFunction{≥}}\AgdaSpace{}%
\AgdaFunction{fst}\AgdaSpace{}%
\AgdaBound{b}\AgdaSymbol{)}\<%
\\
\>[2]\AgdaSymbol{→}\AgdaSpace{}%
\AgdaDatatype{isAccessible}\AgdaSpace{}%
\AgdaSymbol{(}\AgdaOperator{\AgdaInductiveConstructor{ω\textasciicircum{}}}\AgdaSpace{}%
\AgdaBound{x}\AgdaSpace{}%
\AgdaOperator{\AgdaInductiveConstructor{+}}\AgdaSpace{}%
\AgdaBound{b}\AgdaSpace{}%
\AgdaOperator{\AgdaInductiveConstructor{[}}\AgdaSpace{}%
\AgdaBound{r}\AgdaSpace{}%
\AgdaOperator{\AgdaInductiveConstructor{]}}\AgdaSymbol{)}\<%
\\
\>[0]\AgdaFunction{sndAcc}\AgdaSpace{}%
\AgdaSymbol{:}\AgdaSpace{}%
\AgdaSymbol{∀}\AgdaSpace{}%
\AgdaSymbol{\{}\AgdaBound{a}\AgdaSpace{}%
\AgdaBound{b}\AgdaSpace{}%
\AgdaBound{y}\AgdaSymbol{\}}\AgdaSpace{}%
\AgdaSymbol{→}\AgdaSpace{}%
\AgdaDatatype{isAccessible}\AgdaSpace{}%
\AgdaBound{a}\AgdaSpace{}%
\AgdaSymbol{→}\AgdaSpace{}%
\AgdaDatatype{isAccessible}\AgdaSpace{}%
\AgdaBound{b}\<%
\\
\>[0][@{}l@{\AgdaIndent{0}}]%
\>[2]\AgdaSymbol{→}\AgdaSpace{}%
\AgdaBound{y}\AgdaSpace{}%
\AgdaOperator{\AgdaDatatype{<}}\AgdaSpace{}%
\AgdaBound{b}\AgdaSpace{}%
\AgdaSymbol{→}\AgdaSpace{}%
\AgdaSymbol{(}\AgdaBound{r}\AgdaSpace{}%
\AgdaSymbol{:}\AgdaSpace{}%
\AgdaBound{a}\AgdaSpace{}%
\AgdaOperator{\AgdaFunction{≥}}\AgdaSpace{}%
\AgdaFunction{fst}\AgdaSpace{}%
\AgdaBound{y}\AgdaSymbol{)}\<%
\\
\>[2]\AgdaSymbol{→}\AgdaSpace{}%
\AgdaDatatype{isAccessible}\AgdaSpace{}%
\AgdaSymbol{(}\AgdaOperator{\AgdaInductiveConstructor{ω\textasciicircum{}}}\AgdaSpace{}%
\AgdaBound{a}\AgdaSpace{}%
\AgdaOperator{\AgdaInductiveConstructor{+}}\AgdaSpace{}%
\AgdaBound{y}\AgdaSpace{}%
\AgdaOperator{\AgdaInductiveConstructor{[}}\AgdaSpace{}%
\AgdaBound{r}\AgdaSpace{}%
\AgdaOperator{\AgdaInductiveConstructor{]}}\AgdaSymbol{)}\<%
\end{code}}
\newcommand{\plusAcc}{
\begin{code}%
\>[0]\AgdaFunction{ω+Acc}\AgdaSpace{}%
\AgdaSymbol{:}\AgdaSpace{}%
\AgdaSymbol{(}\AgdaBound{a}\AgdaSpace{}%
\AgdaBound{b}\AgdaSpace{}%
\AgdaSymbol{:}\AgdaSpace{}%
\AgdaDatatype{MutualOrd}\AgdaSymbol{)}\AgdaSpace{}%
\AgdaSymbol{(}\AgdaBound{r}\AgdaSpace{}%
\AgdaSymbol{:}\AgdaSpace{}%
\AgdaBound{a}\AgdaSpace{}%
\AgdaOperator{\AgdaFunction{≥}}\AgdaSpace{}%
\AgdaFunction{fst}\AgdaSpace{}%
\AgdaBound{b}\AgdaSymbol{)}\<%
\\
\>[0][@{}l@{\AgdaIndent{0}}]%
\>[5]\AgdaSymbol{→}\AgdaSpace{}%
\AgdaDatatype{isAccessible}\AgdaSpace{}%
\AgdaBound{a}\AgdaSpace{}%
\AgdaSymbol{→}\AgdaSpace{}%
\AgdaDatatype{isAccessible}\AgdaSpace{}%
\AgdaBound{b}\<%
\\
\>[5]\AgdaSymbol{→}\AgdaSpace{}%
\AgdaDatatype{isAccessible}\AgdaSpace{}%
\AgdaSymbol{(}\AgdaOperator{\AgdaInductiveConstructor{ω\textasciicircum{}}}\AgdaSpace{}%
\AgdaBound{a}\AgdaSpace{}%
\AgdaOperator{\AgdaInductiveConstructor{+}}\AgdaSpace{}%
\AgdaBound{b}\AgdaSpace{}%
\AgdaOperator{\AgdaInductiveConstructor{[}}\AgdaSpace{}%
\AgdaBound{r}\AgdaSpace{}%
\AgdaOperator{\AgdaInductiveConstructor{]}}\AgdaSymbol{)}\<%
\end{code}}
\newcommand{\wellfoundedness}{
\begin{code}%
\>[0]\AgdaFunction{WF}\AgdaSpace{}%
\AgdaSymbol{:}\AgdaSpace{}%
\AgdaSymbol{(}\AgdaBound{x}\AgdaSpace{}%
\AgdaSymbol{:}\AgdaSpace{}%
\AgdaDatatype{MutualOrd}\AgdaSymbol{)}\AgdaSpace{}%
\AgdaSymbol{→}\AgdaSpace{}%
\AgdaDatatype{isAccessible}\AgdaSpace{}%
\AgdaBound{x}\<%
\\
\>[0]\AgdaFunction{WF}\AgdaSpace{}%
\AgdaInductiveConstructor{𝟎}\AgdaSpace{}%
\AgdaSymbol{=}\AgdaSpace{}%
\AgdaFunction{𝟎Acc}\<%
\\
\>[0]\AgdaFunction{WF}\AgdaSpace{}%
\AgdaSymbol{(}\AgdaOperator{\AgdaInductiveConstructor{ω\textasciicircum{}}}\AgdaSpace{}%
\AgdaBound{a}\AgdaSpace{}%
\AgdaOperator{\AgdaInductiveConstructor{+}}\AgdaSpace{}%
\AgdaBound{b}\AgdaSpace{}%
\AgdaOperator{\AgdaInductiveConstructor{[}}\AgdaSpace{}%
\AgdaBound{r}\AgdaSpace{}%
\AgdaOperator{\AgdaInductiveConstructor{]}}\AgdaSymbol{)}\AgdaSpace{}%
\AgdaSymbol{=}\AgdaSpace{}%
\AgdaFunction{ω+Acc}\AgdaSpace{}%
\AgdaBound{a}\AgdaSpace{}%
\AgdaBound{b}\AgdaSpace{}%
\AgdaBound{r}\AgdaSpace{}%
\AgdaSymbol{(}\AgdaFunction{WF}\AgdaSpace{}%
\AgdaBound{a}\AgdaSymbol{)}\AgdaSpace{}%
\AgdaSymbol{(}\AgdaFunction{WF}\AgdaSpace{}%
\AgdaBound{b}\AgdaSymbol{)}\<%
\end{code}}
\newcommand{\MTI}{
\begin{code}[inline]%
\>[0]\AgdaFunction{MTI}\AgdaSpace{}%
\AgdaSymbol{:}\AgdaSpace{}%
\AgdaFunction{TI}\AgdaSpace{}%
\AgdaDatatype{MutualOrd}\AgdaSpace{}%
\AgdaOperator{\AgdaDatatype{\AgdaUnderscore{}<\AgdaUnderscore{}}}\AgdaSpace{}%
\AgdaGeneralizable{ℓ}\<%
\end{code}}
\newcommand{\psdDes}{
\begin{code}%
\>[0]\AgdaFunction{pseudo-descending}\AgdaSpace{}%
\AgdaSymbol{:}\AgdaSpace{}%
\AgdaSymbol{(}\AgdaDatatype{ℕ}\AgdaSpace{}%
\AgdaSymbol{→}\AgdaSpace{}%
\AgdaDatatype{MutualOrd}\AgdaSymbol{)}\AgdaSpace{}%
\AgdaSymbol{→}\AgdaSpace{}%
\AgdaFunction{Type₀}\<%
\\
\>[0]\AgdaFunction{pseudo-descending}\AgdaSpace{}%
\AgdaBound{f}\AgdaSpace{}%
\AgdaSymbol{=}\<%
\\
\>[0][@{}l@{\AgdaIndent{0}}]%
\>[2]\AgdaSymbol{∀}\AgdaSpace{}%
\AgdaBound{i}\AgdaSpace{}%
\AgdaSymbol{→}\AgdaSpace{}%
\AgdaBound{f}\AgdaSpace{}%
\AgdaBound{i}\AgdaSpace{}%
\AgdaOperator{\AgdaFunction{>}}\AgdaSpace{}%
\AgdaBound{f}\AgdaSpace{}%
\AgdaSymbol{(}\AgdaInductiveConstructor{suc}\AgdaSpace{}%
\AgdaBound{i}\AgdaSymbol{)}\AgdaSpace{}%
\AgdaOperator{\AgdaDatatype{⊎}}\AgdaSpace{}%
\AgdaSymbol{(}\AgdaBound{f}\AgdaSpace{}%
\AgdaBound{i}\AgdaSpace{}%
\AgdaOperator{\AgdaFunction{≡}}\AgdaSpace{}%
\AgdaInductiveConstructor{𝟎}\AgdaSpace{}%
\AgdaOperator{\AgdaFunction{×}}\AgdaSpace{}%
\AgdaBound{f}\AgdaSpace{}%
\AgdaSymbol{(}\AgdaInductiveConstructor{suc}\AgdaSpace{}%
\AgdaBound{i}\AgdaSymbol{)}\AgdaSpace{}%
\AgdaOperator{\AgdaFunction{≡}}\AgdaSpace{}%
\AgdaInductiveConstructor{𝟎}\AgdaSymbol{)}\<%
\end{code}}
\newcommand{\psdDesFacts}{
\begin{code}%
\>[0]\AgdaFunction{zeroPoint}\AgdaSpace{}%
\AgdaSymbol{:}\AgdaSpace{}%
\AgdaSymbol{∀}\AgdaSpace{}%
\AgdaSymbol{\{}\AgdaBound{f}\AgdaSymbol{\}}\AgdaSpace{}%
\AgdaSymbol{→}\AgdaSpace{}%
\AgdaFunction{pseudo-descending}\AgdaSpace{}%
\AgdaBound{f}\<%
\\
\>[0][@{}l@{\AgdaIndent{0}}]%
\>[2]\AgdaSymbol{→}\AgdaSpace{}%
\AgdaSymbol{∀}\AgdaSpace{}%
\AgdaSymbol{\{}\AgdaBound{i}\AgdaSymbol{\}}\AgdaSpace{}%
\AgdaSymbol{→}\AgdaSpace{}%
\AgdaBound{f}\AgdaSpace{}%
\AgdaBound{i}\AgdaSpace{}%
\AgdaOperator{\AgdaFunction{≡}}\AgdaSpace{}%
\AgdaInductiveConstructor{𝟎}\AgdaSpace{}%
\AgdaSymbol{→}\AgdaSpace{}%
\AgdaSymbol{∀}\AgdaSpace{}%
\AgdaBound{j}\AgdaSpace{}%
\AgdaSymbol{→}\AgdaSpace{}%
\AgdaBound{j}\AgdaSpace{}%
\AgdaOperator{\AgdaFunction{≥ᴺ}}\AgdaSpace{}%
\AgdaBound{i}\AgdaSpace{}%
\AgdaSymbol{→}\AgdaSpace{}%
\AgdaBound{f}\AgdaSpace{}%
\AgdaBound{j}\AgdaSpace{}%
\AgdaOperator{\AgdaFunction{≡}}\AgdaSpace{}%
\AgdaInductiveConstructor{𝟎}\<%
\\
\>[0]\AgdaFunction{nonzeroPoint}\AgdaSpace{}%
\AgdaSymbol{:}\AgdaSpace{}%
\AgdaSymbol{∀}\AgdaSpace{}%
\AgdaSymbol{\{}\AgdaBound{f}\AgdaSymbol{\}}\AgdaSpace{}%
\AgdaSymbol{→}\AgdaSpace{}%
\AgdaFunction{pseudo-descending}\AgdaSpace{}%
\AgdaBound{f}\<%
\\
\>[0][@{}l@{\AgdaIndent{0}}]%
\>[2]\AgdaSymbol{→}\AgdaSpace{}%
\AgdaSymbol{∀}\AgdaSpace{}%
\AgdaSymbol{\{}\AgdaBound{i}\AgdaSymbol{\}}\AgdaSpace{}%
\AgdaSymbol{→}\AgdaSpace{}%
\AgdaBound{f}\AgdaSpace{}%
\AgdaBound{i}\AgdaSpace{}%
\AgdaOperator{\AgdaFunction{>}}\AgdaSpace{}%
\AgdaInductiveConstructor{𝟎}\AgdaSpace{}%
\AgdaSymbol{→}\AgdaSpace{}%
\AgdaBound{f}\AgdaSpace{}%
\AgdaBound{i}\AgdaSpace{}%
\AgdaOperator{\AgdaFunction{>}}\AgdaSpace{}%
\AgdaBound{f}\AgdaSpace{}%
\AgdaSymbol{(}\AgdaInductiveConstructor{suc}\AgdaSpace{}%
\AgdaBound{i}\AgdaSymbol{)}\<%
\end{code}}
\newcommand{\evZero}{
\begin{code}%
\>[0]\AgdaFunction{eventually-zero}\AgdaSpace{}%
\AgdaSymbol{:}\AgdaSpace{}%
\AgdaSymbol{(}\AgdaDatatype{ℕ}\AgdaSpace{}%
\AgdaSymbol{→}\AgdaSpace{}%
\AgdaDatatype{MutualOrd}\AgdaSymbol{)}\AgdaSpace{}%
\AgdaSymbol{→}\AgdaSpace{}%
\AgdaFunction{Type₀}\<%
\\
\>[0]\AgdaFunction{eventually-zero}\AgdaSpace{}%
\AgdaBound{f}\AgdaSpace{}%
\AgdaSymbol{=}\AgdaSpace{}%
\AgdaRecord{Σ}\AgdaSpace{}%
\AgdaSymbol{\textbackslash{}(}\AgdaBound{n}\AgdaSpace{}%
\AgdaSymbol{:}\AgdaSpace{}%
\AgdaDatatype{ℕ}\AgdaSymbol{)}\AgdaSpace{}%
\AgdaSymbol{→}\AgdaSpace{}%
\AgdaSymbol{∀}\AgdaSpace{}%
\AgdaBound{i}\AgdaSpace{}%
\AgdaSymbol{→}\AgdaSpace{}%
\AgdaBound{i}\AgdaSpace{}%
\AgdaOperator{\AgdaFunction{≥ᴺ}}\AgdaSpace{}%
\AgdaBound{n}\AgdaSpace{}%
\AgdaSymbol{→}\AgdaSpace{}%
\AgdaBound{f}\AgdaSpace{}%
\AgdaBound{i}\AgdaSpace{}%
\AgdaOperator{\AgdaFunction{≡}}\AgdaSpace{}%
\AgdaInductiveConstructor{𝟎}\<%
\end{code}}
\newcommand{\evZeroFact}{
\begin{code}%
\>[0]\AgdaFunction{eventually-zero-cons}\AgdaSpace{}%
\AgdaSymbol{:}\<%
\\
\>[0][@{}l@{\AgdaIndent{0}}]%
\>[2]\AgdaSymbol{∀}\AgdaSpace{}%
\AgdaBound{f}\AgdaSpace{}%
\AgdaSymbol{→}\AgdaSpace{}%
\AgdaFunction{eventually-zero}\AgdaSpace{}%
\AgdaSymbol{(}\AgdaBound{f}\AgdaSpace{}%
\AgdaOperator{\AgdaFunction{∘}}\AgdaSpace{}%
\AgdaInductiveConstructor{suc}\AgdaSymbol{)}\AgdaSpace{}%
\AgdaSymbol{→}\AgdaSpace{}%
\AgdaFunction{eventually-zero}\AgdaSpace{}%
\AgdaBound{f}\<%
\end{code}}
\newcommand{\PDtoEZPred}{
\begin{code}%
\>[0]\AgdaFunction{P}\AgdaSpace{}%
\AgdaSymbol{:}\AgdaSpace{}%
\AgdaDatatype{MutualOrd}\AgdaSpace{}%
\AgdaSymbol{→}\AgdaSpace{}%
\AgdaFunction{Type₀}\<%
\\
\>[0]\AgdaFunction{P}\AgdaSpace{}%
\AgdaBound{a}%
\>[892I]\AgdaSymbol{=}\AgdaSpace{}%
\AgdaSymbol{∀}\AgdaSpace{}%
\AgdaBound{f}\AgdaSpace{}%
\AgdaSymbol{→}\AgdaSpace{}%
\AgdaFunction{pseudo-descending}\AgdaSpace{}%
\AgdaBound{f}\AgdaSpace{}%
\AgdaSymbol{→}\AgdaSpace{}%
\AgdaBound{f}\AgdaSpace{}%
\AgdaNumber{0}\AgdaSpace{}%
\AgdaOperator{\AgdaFunction{≡}}\AgdaSpace{}%
\AgdaBound{a}\<%
\\
\>[.][@{}l@{}]\<[892I]%
\>[4]\AgdaSymbol{→}\AgdaSpace{}%
\AgdaFunction{eventually-zero}\AgdaSpace{}%
\AgdaBound{f}\<%
\end{code}}
\newcommand{\PDtoEZStep}{
\begin{code}%
\>[0]\AgdaFunction{step}\AgdaSpace{}%
\AgdaSymbol{:}\AgdaSpace{}%
\AgdaSymbol{∀}\AgdaSpace{}%
\AgdaBound{x}\AgdaSpace{}%
\AgdaSymbol{→}\AgdaSpace{}%
\AgdaSymbol{(∀}\AgdaSpace{}%
\AgdaBound{y}\AgdaSpace{}%
\AgdaSymbol{→}\AgdaSpace{}%
\AgdaBound{y}\AgdaSpace{}%
\AgdaOperator{\AgdaDatatype{<}}\AgdaSpace{}%
\AgdaBound{x}\AgdaSpace{}%
\AgdaSymbol{→}\AgdaSpace{}%
\AgdaFunction{P}\AgdaSpace{}%
\AgdaBound{y}\AgdaSymbol{)}\AgdaSpace{}%
\AgdaSymbol{→}\AgdaSpace{}%
\AgdaFunction{P}\AgdaSpace{}%
\AgdaBound{x}\<%
\\
\>[0]\AgdaFunction{step}\AgdaSpace{}%
\AgdaBound{x}\AgdaSpace{}%
\AgdaBound{h}\AgdaSpace{}%
\AgdaBound{f}\AgdaSpace{}%
\AgdaBound{df}\AgdaSpace{}%
\AgdaBound{f0=x}\AgdaSpace{}%
\AgdaKeyword{with}\AgdaSpace{}%
\AgdaFunction{≥𝟎}\AgdaSpace{}%
\AgdaSymbol{\{}\AgdaBound{f}\AgdaSpace{}%
\AgdaNumber{0}\AgdaSymbol{\}}\<%
\\
\>[0]\AgdaFunction{step}\AgdaSpace{}%
\AgdaBound{x}\AgdaSpace{}%
\AgdaBound{h}\AgdaSpace{}%
\AgdaBound{f}\AgdaSpace{}%
\AgdaBound{df}\AgdaSpace{}%
\AgdaBound{f0=x}\AgdaSpace{}%
\AgdaSymbol{|}\AgdaSpace{}%
\AgdaInductiveConstructor{inj₁}\AgdaSpace{}%
\AgdaBound{f0>0}\AgdaSpace{}%
\AgdaSymbol{=}\AgdaSpace{}%
\AgdaFunction{goal}\<%
\\
\>[0][@{}l@{\AgdaIndent{0}}]%
\>[1]\AgdaKeyword{where}\<%
\\
\>[1][@{}l@{\AgdaIndent{0}}]%
\>[2]\AgdaFunction{f1<x}\AgdaSpace{}%
\AgdaSymbol{:}\AgdaSpace{}%
\AgdaBound{f}\AgdaSpace{}%
\AgdaNumber{1}\AgdaSpace{}%
\AgdaOperator{\AgdaDatatype{<}}\AgdaSpace{}%
\AgdaBound{x}\<%
\\
\>[2]\AgdaFunction{f1<x}\AgdaSpace{}%
\AgdaSymbol{=}\AgdaSpace{}%
\AgdaFunction{subst}\AgdaSpace{}%
\AgdaSymbol{(}\AgdaBound{f}\AgdaSpace{}%
\AgdaNumber{1}\AgdaSpace{}%
\AgdaOperator{\AgdaDatatype{<\AgdaUnderscore{}}}\AgdaSymbol{)}\AgdaSpace{}%
\AgdaBound{f0=x}\AgdaSpace{}%
\AgdaSymbol{(}\AgdaFunction{nonzeroPoint}\AgdaSpace{}%
\AgdaBound{df}\AgdaSpace{}%
\AgdaBound{f0>0}\AgdaSymbol{)}\<%
\\
\>[2]\AgdaFunction{ezfs}\AgdaSpace{}%
\AgdaSymbol{:}\AgdaSpace{}%
\AgdaFunction{eventually-zero}\AgdaSpace{}%
\AgdaSymbol{(}\AgdaBound{f}\AgdaSpace{}%
\AgdaOperator{\AgdaFunction{∘}}\AgdaSpace{}%
\AgdaInductiveConstructor{suc}\AgdaSymbol{)}\<%
\\
\>[2]\AgdaFunction{ezfs}\AgdaSpace{}%
\AgdaSymbol{=}\AgdaSpace{}%
\AgdaBound{h}\AgdaSpace{}%
\AgdaSymbol{(}\AgdaBound{f}\AgdaSpace{}%
\AgdaNumber{1}\AgdaSymbol{)}\AgdaSpace{}%
\AgdaFunction{f1<x}\AgdaSpace{}%
\AgdaSymbol{(}\AgdaBound{f}\AgdaSpace{}%
\AgdaOperator{\AgdaFunction{∘}}\AgdaSpace{}%
\AgdaInductiveConstructor{suc}\AgdaSymbol{)}\AgdaSpace{}%
\AgdaSymbol{(}\AgdaBound{df}\AgdaSpace{}%
\AgdaOperator{\AgdaFunction{∘}}\AgdaSpace{}%
\AgdaInductiveConstructor{suc}\AgdaSymbol{)}\AgdaSpace{}%
\AgdaFunction{refl}\<%
\\
\>[2]\AgdaFunction{goal}\AgdaSpace{}%
\AgdaSymbol{:}\AgdaSpace{}%
\AgdaFunction{eventually-zero}\AgdaSpace{}%
\AgdaBound{f}\<%
\\
\>[2]\AgdaFunction{goal}\AgdaSpace{}%
\AgdaSymbol{=}\AgdaSpace{}%
\AgdaFunction{eventually-zero-cons}\AgdaSpace{}%
\AgdaBound{f}\AgdaSpace{}%
\AgdaFunction{ezfs}\<%
\\
\>[0]\AgdaFunction{step}\AgdaSpace{}%
\AgdaBound{x}\AgdaSpace{}%
\AgdaBound{h}\AgdaSpace{}%
\AgdaBound{f}\AgdaSpace{}%
\AgdaBound{df}\AgdaSpace{}%
\AgdaBound{f0=x}\AgdaSpace{}%
\AgdaSymbol{|}\AgdaSpace{}%
\AgdaInductiveConstructor{inj₂}\AgdaSpace{}%
\AgdaBound{f0=0}\AgdaSpace{}%
\AgdaSymbol{=}\AgdaSpace{}%
\AgdaFunction{goal}\<%
\\
\>[0][@{}l@{\AgdaIndent{0}}]%
\>[1]\AgdaKeyword{where}\<%
\\
\>[1][@{}l@{\AgdaIndent{0}}]%
\>[2]\AgdaFunction{fi=0}\AgdaSpace{}%
\AgdaSymbol{:}\AgdaSpace{}%
\AgdaSymbol{∀}\AgdaSpace{}%
\AgdaBound{i}\AgdaSpace{}%
\AgdaSymbol{→}\AgdaSpace{}%
\AgdaBound{f}\AgdaSpace{}%
\AgdaBound{i}\AgdaSpace{}%
\AgdaOperator{\AgdaFunction{≡}}\AgdaSpace{}%
\AgdaInductiveConstructor{𝟎}\<%
\\
\>[2]\AgdaFunction{fi=0}\AgdaSpace{}%
\AgdaBound{i}\AgdaSpace{}%
\AgdaSymbol{=}\AgdaSpace{}%
\AgdaFunction{zeroPoint}\AgdaSpace{}%
\AgdaBound{df}\AgdaSpace{}%
\AgdaBound{f0=0}\AgdaSpace{}%
\AgdaBound{i}\AgdaSpace{}%
\AgdaInductiveConstructor{z≤n}\<%
\\
\>[2]\AgdaFunction{goal}\AgdaSpace{}%
\AgdaSymbol{:}\AgdaSpace{}%
\AgdaFunction{eventually-zero}\AgdaSpace{}%
\AgdaBound{f}\<%
\\
\>[2]\AgdaFunction{goal}\AgdaSpace{}%
\AgdaSymbol{=}\AgdaSpace{}%
\AgdaNumber{0}\AgdaSpace{}%
\AgdaOperator{\AgdaInductiveConstructor{,}}\AgdaSpace{}%
\AgdaSymbol{λ}\AgdaSpace{}%
\AgdaBound{i}\AgdaSpace{}%
\AgdaBound{\AgdaUnderscore{}}\AgdaSpace{}%
\AgdaSymbol{→}\AgdaSpace{}%
\AgdaFunction{fi=0}\AgdaSpace{}%
\AgdaBound{i}\<%
\end{code}}
\newcommand{\PDtoEZ}{
\begin{code}[inline]%
\>[0][@{}l@{\AgdaIndent{1}}]%
\>[1]\AgdaSymbol{∀}\AgdaSpace{}%
\AgdaBound{f}\AgdaSpace{}%
\AgdaSymbol{→}\AgdaSpace{}%
\AgdaFunction{pseudo-descending}\AgdaSpace{}%
\AgdaBound{f}\AgdaSpace{}%
\AgdaSymbol{→}\AgdaSpace{}%
\AgdaFunction{eventually-zero}\AgdaSpace{}%
\AgdaBound{f}\<%
\end{code}}
\newcommand{\PDtoEZProof}{
\begin{code}[inline]%
\>[0]\AgdaFunction{PD2EZ}\AgdaSpace{}%
\AgdaBound{f}\AgdaSpace{}%
\AgdaBound{df}\AgdaSpace{}%
\AgdaSymbol{=}\AgdaSpace{}%
\AgdaFunction{MTI}\AgdaSpace{}%
\AgdaFunction{P}\AgdaSpace{}%
\AgdaFunction{step}\AgdaSpace{}%
\AgdaSymbol{(}\AgdaBound{f}\AgdaSpace{}%
\AgdaNumber{0}\AgdaSymbol{)}\AgdaSpace{}%
\AgdaBound{f}\AgdaSpace{}%
\AgdaBound{df}\AgdaSpace{}%
\AgdaFunction{refl}\<%
\end{code}}
\newcommand{\strDes}{
\begin{code}%
\>[0]\AgdaFunction{strictly-descending}\AgdaSpace{}%
\AgdaSymbol{:}\AgdaSpace{}%
\AgdaSymbol{(}\AgdaDatatype{ℕ}\AgdaSpace{}%
\AgdaSymbol{→}\AgdaSpace{}%
\AgdaDatatype{MutualOrd}\AgdaSymbol{)}\AgdaSpace{}%
\AgdaSymbol{→}\AgdaSpace{}%
\AgdaFunction{Type₀}\<%
\\
\>[0]\AgdaFunction{strictly-descending}\AgdaSpace{}%
\AgdaBound{f}\AgdaSpace{}%
\AgdaSymbol{=}\AgdaSpace{}%
\AgdaSymbol{∀}\AgdaSpace{}%
\AgdaBound{i}\AgdaSpace{}%
\AgdaSymbol{→}\AgdaSpace{}%
\AgdaBound{f}\AgdaSpace{}%
\AgdaBound{i}\AgdaSpace{}%
\AgdaOperator{\AgdaFunction{>}}\AgdaSpace{}%
\AgdaBound{f}\AgdaSpace{}%
\AgdaSymbol{(}\AgdaInductiveConstructor{suc}\AgdaSpace{}%
\AgdaBound{i}\AgdaSymbol{)}\<%
\end{code}}
\newcommand{\NSDS}{
\begin{code}[inline]%
\>[0]\AgdaFunction{NSDS}\AgdaSpace{}%
\AgdaSymbol{:}\AgdaSpace{}%
\AgdaSymbol{∀}\AgdaSpace{}%
\AgdaBound{f}\AgdaSpace{}%
\AgdaSymbol{→}\AgdaSpace{}%
\AgdaFunction{strictly-descending}\AgdaSpace{}%
\AgdaBound{f}\AgdaSpace{}%
\AgdaSymbol{→}\AgdaSpace{}%
\AgdaDatatype{⊥}\<%
\end{code}}
\newcommand{\HITOrdLt}{
\begin{code}%
\>[0]\AgdaOperator{\AgdaFunction{\AgdaUnderscore{}<ᴴ\AgdaUnderscore{}}}\AgdaSpace{}%
\AgdaSymbol{:}\AgdaSpace{}%
\AgdaDatatype{HITOrd}\AgdaSpace{}%
\AgdaSymbol{→}\AgdaSpace{}%
\AgdaDatatype{HITOrd}\AgdaSpace{}%
\AgdaSymbol{→}\AgdaSpace{}%
\AgdaFunction{Type₀}\<%
\\
\>[0]\AgdaOperator{\AgdaFunction{\AgdaUnderscore{}<ᴴ\AgdaUnderscore{}}}\AgdaSpace{}%
\AgdaSymbol{=}\AgdaSpace{}%
\AgdaFunction{transport}\AgdaSpace{}%
\AgdaSymbol{(λ}\AgdaSpace{}%
\AgdaBound{i}\AgdaSpace{}%
\AgdaSymbol{→}\AgdaSpace{}%
\AgdaFunction{M≡H}\AgdaSpace{}%
\AgdaBound{i}\AgdaSpace{}%
\AgdaSymbol{→}\AgdaSpace{}%
\AgdaFunction{M≡H}\AgdaSpace{}%
\AgdaBound{i}\AgdaSpace{}%
\AgdaSymbol{→}\AgdaSpace{}%
\AgdaFunction{Type₀}\AgdaSymbol{)}\AgdaSpace{}%
\AgdaOperator{\AgdaDatatype{\AgdaUnderscore{}<\AgdaUnderscore{}}}\<%
\end{code}}
\newcommand{\HITOrdLtPath}{
\begin{code}%
\>[0]\AgdaFunction{<Path}\AgdaSpace{}%
\AgdaSymbol{:}\AgdaSpace{}%
\AgdaPostulate{PathP}\AgdaSpace{}%
\AgdaSymbol{(λ}\AgdaSpace{}%
\AgdaBound{i}\AgdaSpace{}%
\AgdaSymbol{→}\AgdaSpace{}%
\AgdaFunction{M≡H}\AgdaSpace{}%
\AgdaBound{i}\AgdaSpace{}%
\AgdaSymbol{→}\AgdaSpace{}%
\AgdaFunction{M≡H}\AgdaSpace{}%
\AgdaBound{i}\AgdaSpace{}%
\AgdaSymbol{→}\AgdaSpace{}%
\AgdaFunction{Type₀}\AgdaSymbol{)}\AgdaSpace{}%
\AgdaOperator{\AgdaDatatype{\AgdaUnderscore{}<\AgdaUnderscore{}}}\AgdaSpace{}%
\AgdaOperator{\AgdaFunction{\AgdaUnderscore{}<ᴴ\AgdaUnderscore{}}}\<%
\end{code}}
\newcommand{\HTI}{
\begin{code}[inline]%
\>[0]\AgdaFunction{HTI}\AgdaSpace{}%
\AgdaSymbol{:}\AgdaSpace{}%
\AgdaFunction{TI}\AgdaSpace{}%
\AgdaDatatype{HITOrd}\AgdaSpace{}%
\AgdaOperator{\AgdaFunction{\AgdaUnderscore{}<ᴴ\AgdaUnderscore{}}}\AgdaSpace{}%
\AgdaGeneralizable{ℓ}\<%
\end{code}}
\newcommand{\DEC}{
\begin{code}%
\>[0]\AgdaFunction{Dec}\AgdaSpace{}%
\AgdaSymbol{:}\AgdaSpace{}%
\AgdaSymbol{(}\AgdaBound{A}\AgdaSpace{}%
\AgdaSymbol{:}\AgdaSpace{}%
\AgdaFunction{Type}\AgdaSpace{}%
\AgdaGeneralizable{ℓ}\AgdaSymbol{)}\AgdaSpace{}%
\AgdaSymbol{→}\AgdaSpace{}%
\AgdaSymbol{(}\AgdaBound{A}\AgdaSpace{}%
\AgdaSymbol{→}\AgdaSpace{}%
\AgdaBound{A}\AgdaSpace{}%
\AgdaSymbol{→}\AgdaSpace{}%
\AgdaFunction{Type}\AgdaSpace{}%
\AgdaGeneralizable{ℓ'}\AgdaSymbol{)}\AgdaSpace{}%
\AgdaSymbol{→}\AgdaSpace{}%
\AgdaFunction{Type}\AgdaSpace{}%
\AgdaSymbol{(}\AgdaGeneralizable{ℓ}\AgdaSpace{}%
\AgdaOperator{\AgdaPrimitive{⊔}}\AgdaSpace{}%
\AgdaGeneralizable{ℓ'}\AgdaSymbol{)}\<%
\\
\>[0]\AgdaFunction{Dec}\AgdaSpace{}%
\AgdaBound{A}\AgdaSpace{}%
\AgdaOperator{\AgdaBound{\AgdaUnderscore{}<\AgdaUnderscore{}}}\AgdaSpace{}%
\AgdaSymbol{=}\AgdaSpace{}%
\AgdaSymbol{(}\AgdaBound{x}\AgdaSpace{}%
\AgdaBound{y}\AgdaSpace{}%
\AgdaSymbol{:}\AgdaSpace{}%
\AgdaBound{A}\AgdaSymbol{)}\AgdaSpace{}%
\AgdaSymbol{→}\AgdaSpace{}%
\AgdaBound{x}\AgdaSpace{}%
\AgdaOperator{\AgdaBound{<}}\AgdaSpace{}%
\AgdaBound{y}\AgdaSpace{}%
\AgdaOperator{\AgdaDatatype{⊎}}\AgdaSpace{}%
\AgdaOperator{\AgdaFunction{¬}}\AgdaSpace{}%
\AgdaBound{x}\AgdaSpace{}%
\AgdaOperator{\AgdaBound{<}}\AgdaSpace{}%
\AgdaBound{y}\<%
\end{code}}
\newcommand{\MOLtDec}{
\begin{code}%
\>[0]\AgdaFunction{<-dec}\AgdaSpace{}%
\AgdaSymbol{:}\AgdaSpace{}%
\AgdaFunction{Dec}\AgdaSpace{}%
\AgdaDatatype{MutualOrd}\AgdaSpace{}%
\AgdaOperator{\AgdaDatatype{\AgdaUnderscore{}<\AgdaUnderscore{}}}\<%
\end{code}}
\newcommand{\HOLtDec}{
\begin{code}%
\>[0]\AgdaFunction{<ᴴ-dec}\AgdaSpace{}%
\AgdaSymbol{:}\AgdaSpace{}%
\AgdaFunction{Dec}\AgdaSpace{}%
\AgdaDatatype{HITOrd}\AgdaSpace{}%
\AgdaOperator{\AgdaFunction{\AgdaUnderscore{}<ᴴ\AgdaUnderscore{}}}\<%
\\
\>[0]\AgdaFunction{<ᴴ-dec}\AgdaSpace{}%
\AgdaSymbol{=}\AgdaSpace{}%
\AgdaFunction{transport}\AgdaSpace{}%
\AgdaSymbol{(λ}\AgdaSpace{}%
\AgdaBound{i}\AgdaSpace{}%
\AgdaSymbol{→}\AgdaSpace{}%
\AgdaFunction{Dec}\AgdaSpace{}%
\AgdaSymbol{(}\AgdaFunction{M≡H}\AgdaSpace{}%
\AgdaBound{i}\AgdaSymbol{)}\AgdaSpace{}%
\AgdaSymbol{(}\AgdaFunction{<Path}\AgdaSpace{}%
\AgdaBound{i}\AgdaSymbol{))}\AgdaSpace{}%
\AgdaFunction{<-dec}\<%
\end{code}}
\newcommand{\HOLtBool}{
\begin{code}%
\>[0]\AgdaFunction{lt}\AgdaSpace{}%
\AgdaSymbol{:}\AgdaSpace{}%
\AgdaDatatype{HITOrd}\AgdaSpace{}%
\AgdaSymbol{→}\AgdaSpace{}%
\AgdaDatatype{HITOrd}\AgdaSpace{}%
\AgdaSymbol{→}\AgdaSpace{}%
\AgdaDatatype{Bool}\<%
\\
\>[0]\AgdaFunction{lt}\AgdaSpace{}%
\AgdaBound{a}\AgdaSpace{}%
\AgdaBound{b}\AgdaSpace{}%
\AgdaSymbol{=}\AgdaSpace{}%
\AgdaFunction{isLeft}\AgdaSpace{}%
\AgdaSymbol{(}\AgdaFunction{<ᴴ-dec}\AgdaSpace{}%
\AgdaBound{a}\AgdaSpace{}%
\AgdaBound{b}\AgdaSymbol{)}\<%
\end{code}}
\newcommand{\ExLtComp}{
\begin{code}%
\>[0]\AgdaFunction{Ex[<ᴴ-decComp]}\AgdaSpace{}%
\AgdaSymbol{:}\<%
\\
\>[0][@{}l@{\AgdaIndent{0}}]%
\>[4]\AgdaFunction{lt}\AgdaSpace{}%
\AgdaInductiveConstructor{𝟎}\AgdaSpace{}%
\AgdaInductiveConstructor{𝟎}\AgdaSpace{}%
\AgdaOperator{\AgdaFunction{≡}}\AgdaSpace{}%
\AgdaInductiveConstructor{false}\<%
\\
\>[0][@{}l@{\AgdaIndent{0}}]%
\>[1]\AgdaOperator{\AgdaFunction{×}}%
\>[4]\AgdaFunction{lt}\AgdaSpace{}%
\AgdaFunction{H.ω}\AgdaSpace{}%
\AgdaSymbol{((}\AgdaFunction{H.𝟏}\AgdaSpace{}%
\AgdaOperator{\AgdaFunction{⊕}}\AgdaSpace{}%
\AgdaFunction{H.𝟏}\AgdaSymbol{)}\AgdaSpace{}%
\AgdaOperator{\AgdaFunction{⊗}}\AgdaSpace{}%
\AgdaFunction{H.ω}\AgdaSymbol{)}\AgdaSpace{}%
\AgdaOperator{\AgdaFunction{≡}}\AgdaSpace{}%
\AgdaInductiveConstructor{true}\<%
\\
\>[1]\AgdaOperator{\AgdaFunction{×}}%
\>[4]\AgdaFunction{lt}\AgdaSpace{}%
\AgdaSymbol{(}\AgdaOperator{\AgdaFunction{H.ω\textasciicircum{}⟨}}\AgdaSpace{}%
\AgdaFunction{H.ω}\AgdaSpace{}%
\AgdaOperator{\AgdaFunction{⟩}}\AgdaSymbol{)}\AgdaSpace{}%
\AgdaSymbol{(}\AgdaOperator{\AgdaFunction{H.ω\textasciicircum{}⟨}}\AgdaSpace{}%
\AgdaFunction{H.𝟏}\AgdaSpace{}%
\AgdaOperator{\AgdaFunction{+ᴴ}}\AgdaSpace{}%
\AgdaFunction{H.ω}\AgdaSpace{}%
\AgdaOperator{\AgdaFunction{⟩}}\AgdaSymbol{)}\AgdaSpace{}%
\AgdaOperator{\AgdaFunction{≡}}\AgdaSpace{}%
\AgdaInductiveConstructor{false}\<%
\\
\>[1]\AgdaOperator{\AgdaFunction{×}}%
\>[4]\AgdaFunction{lt}\AgdaSpace{}%
\AgdaSymbol{(}\AgdaOperator{\AgdaFunction{H.ω\textasciicircum{}⟨}}\AgdaSpace{}%
\AgdaFunction{H.ω}\AgdaSpace{}%
\AgdaOperator{\AgdaFunction{⟩}}\AgdaSymbol{)}\AgdaSpace{}%
\AgdaSymbol{(}\AgdaOperator{\AgdaFunction{H.ω\textasciicircum{}⟨}}\AgdaSpace{}%
\AgdaFunction{H.𝟏}\AgdaSpace{}%
\AgdaOperator{\AgdaFunction{⊕}}\AgdaSpace{}%
\AgdaFunction{H.ω}\AgdaSpace{}%
\AgdaOperator{\AgdaFunction{⟩}}\AgdaSymbol{)}\AgdaSpace{}%
\AgdaOperator{\AgdaFunction{≡}}\AgdaSpace{}%
\AgdaInductiveConstructor{true}\<%
\\
\>[0]\AgdaFunction{Ex[<ᴴ-decComp]}\AgdaSpace{}%
\AgdaSymbol{=}\AgdaSpace{}%
\AgdaSymbol{(}\AgdaFunction{refl}\AgdaSpace{}%
\AgdaOperator{\AgdaInductiveConstructor{,}}\AgdaSpace{}%
\AgdaFunction{refl}\AgdaSpace{}%
\AgdaOperator{\AgdaInductiveConstructor{,}}\AgdaSpace{}%
\AgdaFunction{refl}\AgdaSpace{}%
\AgdaOperator{\AgdaInductiveConstructor{,}}\AgdaSpace{}%
\AgdaFunction{refl}\AgdaSymbol{)}\<%
\end{code}}
\newcommand{\AssocType}{
\begin{code}%
\>[0]\AgdaFunction{Assoc}\AgdaSpace{}%
\AgdaSymbol{:}\AgdaSpace{}%
\AgdaSymbol{(}\AgdaBound{A}\AgdaSpace{}%
\AgdaSymbol{:}\AgdaSpace{}%
\AgdaFunction{Type₀}\AgdaSymbol{)}\AgdaSpace{}%
\AgdaSymbol{→}\AgdaSpace{}%
\AgdaSymbol{(}\AgdaBound{A}\AgdaSpace{}%
\AgdaSymbol{→}\AgdaSpace{}%
\AgdaBound{A}\AgdaSpace{}%
\AgdaSymbol{→}\AgdaSpace{}%
\AgdaBound{A}\AgdaSymbol{)}\AgdaSpace{}%
\AgdaSymbol{→}\AgdaSpace{}%
\AgdaFunction{Type₀}\<%
\\
\>[0]\AgdaFunction{Assoc}\AgdaSpace{}%
\AgdaBound{A}\AgdaSpace{}%
\AgdaOperator{\AgdaBound{\AgdaUnderscore{}⋆\AgdaUnderscore{}}}\AgdaSpace{}%
\AgdaSymbol{=}\AgdaSpace{}%
\AgdaSymbol{∀}\AgdaSpace{}%
\AgdaBound{a}\AgdaSpace{}%
\AgdaBound{b}\AgdaSpace{}%
\AgdaBound{c}\AgdaSpace{}%
\AgdaSymbol{→}\AgdaSpace{}%
\AgdaBound{a}\AgdaSpace{}%
\AgdaOperator{\AgdaBound{⋆}}\AgdaSpace{}%
\AgdaSymbol{(}\AgdaBound{b}\AgdaSpace{}%
\AgdaOperator{\AgdaBound{⋆}}\AgdaSpace{}%
\AgdaBound{c}\AgdaSymbol{)}\AgdaSpace{}%
\AgdaOperator{\AgdaFunction{≡}}\AgdaSpace{}%
\AgdaSymbol{(}\AgdaBound{a}\AgdaSpace{}%
\AgdaOperator{\AgdaBound{⋆}}\AgdaSpace{}%
\AgdaBound{b}\AgdaSymbol{)}\AgdaSpace{}%
\AgdaOperator{\AgdaBound{⋆}}\AgdaSpace{}%
\AgdaBound{c}\<%
\end{code}}
\newcommand{\CommType}{
\begin{code}%
\>[0]\AgdaFunction{Comm}\AgdaSpace{}%
\AgdaSymbol{:}\AgdaSpace{}%
\AgdaSymbol{(}\AgdaBound{A}\AgdaSpace{}%
\AgdaSymbol{:}\AgdaSpace{}%
\AgdaFunction{Type₀}\AgdaSymbol{)}\AgdaSpace{}%
\AgdaSymbol{→}\AgdaSpace{}%
\AgdaSymbol{(}\AgdaBound{A}\AgdaSpace{}%
\AgdaSymbol{→}\AgdaSpace{}%
\AgdaBound{A}\AgdaSpace{}%
\AgdaSymbol{→}\AgdaSpace{}%
\AgdaBound{A}\AgdaSymbol{)}\AgdaSpace{}%
\AgdaSymbol{→}\AgdaSpace{}%
\AgdaFunction{Type₀}\<%
\\
\>[0]\AgdaFunction{Comm}\AgdaSpace{}%
\AgdaBound{A}\AgdaSpace{}%
\AgdaOperator{\AgdaBound{\AgdaUnderscore{}⋆\AgdaUnderscore{}}}\AgdaSpace{}%
\AgdaSymbol{=}\AgdaSpace{}%
\AgdaSymbol{∀}\AgdaSpace{}%
\AgdaBound{a}\AgdaSpace{}%
\AgdaBound{b}\AgdaSpace{}%
\AgdaSymbol{→}\AgdaSpace{}%
\AgdaBound{a}\AgdaSpace{}%
\AgdaOperator{\AgdaBound{⋆}}\AgdaSpace{}%
\AgdaBound{b}\AgdaSpace{}%
\AgdaOperator{\AgdaFunction{≡}}\AgdaSpace{}%
\AgdaBound{b}\AgdaSpace{}%
\AgdaOperator{\AgdaBound{⋆}}\AgdaSpace{}%
\AgdaBound{a}\<%
\end{code}}
\newcommand{\Hsum}{
\begin{code}%
\>[0]\AgdaOperator{\AgdaFunction{\AgdaUnderscore{}⊕\AgdaUnderscore{}}}\AgdaSpace{}%
\AgdaSymbol{:}\AgdaSpace{}%
\AgdaDatatype{HITOrd}\AgdaSpace{}%
\AgdaSymbol{→}\AgdaSpace{}%
\AgdaDatatype{HITOrd}\AgdaSpace{}%
\AgdaSymbol{→}\AgdaSpace{}%
\AgdaDatatype{HITOrd}\<%
\\
\>[0]\AgdaInductiveConstructor{𝟎}%
\>[17]\AgdaOperator{\AgdaFunction{⊕}}\AgdaSpace{}%
\AgdaBound{y}\AgdaSpace{}%
\AgdaSymbol{=}\AgdaSpace{}%
\AgdaBound{y}\<%
\\
\>[0]\AgdaSymbol{(}\AgdaOperator{\AgdaInductiveConstructor{ω\textasciicircum{}}}\AgdaSpace{}%
\AgdaBound{a}\AgdaSpace{}%
\AgdaOperator{\AgdaInductiveConstructor{⊕}}\AgdaSpace{}%
\AgdaBound{b}\AgdaSymbol{)}%
\>[17]\AgdaOperator{\AgdaFunction{⊕}}\AgdaSpace{}%
\AgdaBound{y}\AgdaSpace{}%
\AgdaSymbol{=}\AgdaSpace{}%
\AgdaOperator{\AgdaInductiveConstructor{ω\textasciicircum{}}}\AgdaSpace{}%
\AgdaBound{a}\AgdaSpace{}%
\AgdaOperator{\AgdaInductiveConstructor{⊕}}\AgdaSpace{}%
\AgdaSymbol{(}\AgdaBound{b}\AgdaSpace{}%
\AgdaOperator{\AgdaFunction{⊕}}\AgdaSpace{}%
\AgdaBound{y}\AgdaSymbol{)}\<%
\\
\>[0]\AgdaSymbol{(}\AgdaInductiveConstructor{swap}\AgdaSpace{}%
\AgdaBound{a}\AgdaSpace{}%
\AgdaBound{b}\AgdaSpace{}%
\AgdaBound{c}\AgdaSpace{}%
\AgdaBound{i}\AgdaSymbol{)}%
\>[17]\AgdaOperator{\AgdaFunction{⊕}}\AgdaSpace{}%
\AgdaBound{y}\AgdaSpace{}%
\AgdaSymbol{=}\AgdaSpace{}%
\AgdaInductiveConstructor{swap}\AgdaSpace{}%
\AgdaBound{a}\AgdaSpace{}%
\AgdaBound{b}\AgdaSpace{}%
\AgdaSymbol{(}\AgdaBound{c}\AgdaSpace{}%
\AgdaOperator{\AgdaFunction{⊕}}\AgdaSpace{}%
\AgdaBound{y}\AgdaSymbol{)}\AgdaSpace{}%
\AgdaBound{i}\<%
\\
\>[0]\AgdaSymbol{(}\AgdaInductiveConstructor{trunc}\AgdaSpace{}%
\AgdaBound{p}\AgdaSpace{}%
\AgdaBound{q}\AgdaSpace{}%
\AgdaBound{i}\AgdaSpace{}%
\AgdaBound{j}\AgdaSymbol{)}%
\>[17]\AgdaOperator{\AgdaFunction{⊕}}\AgdaSpace{}%
\AgdaBound{y}\AgdaSpace{}%
\AgdaSymbol{=}\AgdaSpace{}%
\AgdaInductiveConstructor{trunc}\AgdaSpace{}%
\AgdaSymbol{(}\AgdaFunction{cong}\AgdaSpace{}%
\AgdaSymbol{(}\AgdaOperator{\AgdaFunction{\AgdaUnderscore{}⊕}}\AgdaSpace{}%
\AgdaBound{y}\AgdaSymbol{)}\AgdaSpace{}%
\AgdaBound{p}\AgdaSymbol{)}\AgdaSpace{}%
\AgdaSymbol{(}\AgdaFunction{cong}\AgdaSpace{}%
\AgdaSymbol{(}\AgdaOperator{\AgdaFunction{\AgdaUnderscore{}⊕}}\AgdaSpace{}%
\AgdaBound{y}\AgdaSymbol{)}\AgdaSpace{}%
\AgdaBound{q}\AgdaSymbol{)}\AgdaSpace{}%
\AgdaBound{i}\AgdaSpace{}%
\AgdaBound{j}\<%
\end{code}}
\newcommand{\HsumLemmaunitR}{
\begin{code}%
\>[0]\AgdaFunction{⊕unitr}\AgdaSpace{}%
\AgdaSymbol{:}\AgdaSpace{}%
\AgdaSymbol{(}\AgdaBound{a}\AgdaSpace{}%
\AgdaSymbol{:}\AgdaSpace{}%
\AgdaDatatype{HITOrd}\AgdaSymbol{)}\AgdaSpace{}%
\AgdaSymbol{→}\AgdaSpace{}%
\AgdaBound{a}\AgdaSpace{}%
\AgdaOperator{\AgdaFunction{⊕}}\AgdaSpace{}%
\AgdaInductiveConstructor{𝟎}\AgdaSpace{}%
\AgdaOperator{\AgdaFunction{≡}}\AgdaSpace{}%
\AgdaBound{a}\<%
\end{code}}
\newcommand{\HsumLemmas}{
\begin{code}%
\>[0]\AgdaFunction{⊕assoc}\AgdaSpace{}%
\AgdaSymbol{:}\AgdaSpace{}%
\AgdaFunction{Assoc}\AgdaSpace{}%
\AgdaDatatype{HITOrd}\AgdaSpace{}%
\AgdaOperator{\AgdaFunction{\AgdaUnderscore{}⊕\AgdaUnderscore{}}}\<%
\\
\>[0]\AgdaFunction{ω\textasciicircum{}⊕=⊕ω\textasciicircum{}}\AgdaSpace{}%
\AgdaSymbol{:}\AgdaSpace{}%
\AgdaSymbol{(}\AgdaBound{a}\AgdaSpace{}%
\AgdaBound{b}\AgdaSpace{}%
\AgdaSymbol{:}\AgdaSpace{}%
\AgdaDatatype{HITOrd}\AgdaSymbol{)}\AgdaSpace{}%
\AgdaSymbol{→}\AgdaSpace{}%
\AgdaSymbol{(}\AgdaOperator{\AgdaInductiveConstructor{ω\textasciicircum{}}}\AgdaSpace{}%
\AgdaBound{a}\AgdaSpace{}%
\AgdaOperator{\AgdaInductiveConstructor{⊕}}\AgdaSpace{}%
\AgdaBound{b}\AgdaSymbol{)}\AgdaSpace{}%
\AgdaOperator{\AgdaFunction{≡}}\AgdaSpace{}%
\AgdaBound{b}\AgdaSpace{}%
\AgdaOperator{\AgdaFunction{⊕}}\AgdaSpace{}%
\AgdaOperator{\AgdaFunction{H.ω\textasciicircum{}⟨}}\AgdaSpace{}%
\AgdaBound{a}\AgdaSpace{}%
\AgdaOperator{\AgdaFunction{⟩}}\<%
\end{code}}
\newcommand{\HsumComm}{
\begin{code}%
\>[0]\AgdaFunction{⊕comm}\AgdaSpace{}%
\AgdaSymbol{:}\AgdaSpace{}%
\AgdaFunction{Comm}\AgdaSpace{}%
\AgdaDatatype{HITOrd}\AgdaSpace{}%
\AgdaOperator{\AgdaFunction{\AgdaUnderscore{}⊕\AgdaUnderscore{}}}\<%
\\
\>[0]\AgdaFunction{⊕comm}\AgdaSpace{}%
\AgdaBound{a}\AgdaSpace{}%
\AgdaSymbol{=}\AgdaSpace{}%
\AgdaFunction{indProp}\AgdaSpace{}%
\AgdaFunction{P}\AgdaSpace{}%
\AgdaInductiveConstructor{trunc}\AgdaSpace{}%
\AgdaFunction{base}\AgdaSpace{}%
\AgdaFunction{step}\<%
\\
\>[0][@{}l@{\AgdaIndent{0}}]%
\>[1]\AgdaKeyword{where}\<%
\\
\>[1][@{}l@{\AgdaIndent{0}}]%
\>[2]\AgdaFunction{P}\AgdaSpace{}%
\AgdaSymbol{:}\AgdaSpace{}%
\AgdaDatatype{HITOrd}\AgdaSpace{}%
\AgdaSymbol{→}\AgdaSpace{}%
\AgdaFunction{Type₀}\<%
\\
\>[2]\AgdaFunction{P}\AgdaSpace{}%
\AgdaBound{b}\AgdaSpace{}%
\AgdaSymbol{=}\AgdaSpace{}%
\AgdaBound{a}\AgdaSpace{}%
\AgdaOperator{\AgdaFunction{⊕}}\AgdaSpace{}%
\AgdaBound{b}\AgdaSpace{}%
\AgdaOperator{\AgdaFunction{≡}}\AgdaSpace{}%
\AgdaBound{b}\AgdaSpace{}%
\AgdaOperator{\AgdaFunction{⊕}}\AgdaSpace{}%
\AgdaBound{a}\<%
\\
\>[2]\AgdaFunction{base}\AgdaSpace{}%
\AgdaSymbol{:}\AgdaSpace{}%
\AgdaFunction{P}\AgdaSpace{}%
\AgdaInductiveConstructor{𝟎}\<%
\\
\>[2]\AgdaFunction{base}\AgdaSpace{}%
\AgdaSymbol{=}\AgdaSpace{}%
\AgdaFunction{⊕unitr}\AgdaSpace{}%
\AgdaBound{a}\<%
\\
\>[2]\AgdaFunction{step}\AgdaSpace{}%
\AgdaSymbol{:}\AgdaSpace{}%
\AgdaSymbol{∀}\AgdaSpace{}%
\AgdaSymbol{\{}\AgdaBound{x}\AgdaSpace{}%
\AgdaBound{y}\AgdaSymbol{\}}\AgdaSpace{}%
\AgdaSymbol{→}\AgdaSpace{}%
\AgdaFunction{P}\AgdaSpace{}%
\AgdaBound{x}\AgdaSpace{}%
\AgdaSymbol{→}\AgdaSpace{}%
\AgdaFunction{P}\AgdaSpace{}%
\AgdaBound{y}\AgdaSpace{}%
\AgdaSymbol{→}\AgdaSpace{}%
\AgdaFunction{P}\AgdaSpace{}%
\AgdaSymbol{(}\AgdaOperator{\AgdaInductiveConstructor{ω\textasciicircum{}}}\AgdaSpace{}%
\AgdaBound{x}\AgdaSpace{}%
\AgdaOperator{\AgdaInductiveConstructor{⊕}}\AgdaSpace{}%
\AgdaBound{y}\AgdaSymbol{)}\<%
\\
\>[2]\AgdaFunction{step}\AgdaSpace{}%
\AgdaSymbol{\{}\AgdaBound{x}\AgdaSymbol{\}}\AgdaSpace{}%
\AgdaSymbol{\{}\AgdaBound{y}\AgdaSymbol{\}}\AgdaSpace{}%
\AgdaSymbol{\AgdaUnderscore{}}\AgdaSpace{}%
\AgdaBound{p}\AgdaSpace{}%
\AgdaSymbol{=}\AgdaSpace{}%
\AgdaOperator{\AgdaFunction{begin}}\<%
\\
\>[2][@{}l@{\AgdaIndent{0}}]%
\>[6]\AgdaBound{a}\AgdaSpace{}%
\AgdaOperator{\AgdaFunction{⊕}}\AgdaSpace{}%
\AgdaSymbol{(}\AgdaOperator{\AgdaInductiveConstructor{ω\textasciicircum{}}}\AgdaSpace{}%
\AgdaBound{x}\AgdaSpace{}%
\AgdaOperator{\AgdaInductiveConstructor{⊕}}\AgdaSpace{}%
\AgdaBound{y}\AgdaSymbol{)}\<%
\\
\>[2][@{}l@{\AgdaIndent{0}}]%
\>[4]\AgdaOperator{\AgdaFunction{≡⟨}}\AgdaSpace{}%
\AgdaFunction{cong}\AgdaSpace{}%
\AgdaSymbol{(}\AgdaBound{a}\AgdaSpace{}%
\AgdaOperator{\AgdaFunction{⊕\AgdaUnderscore{}}}\AgdaSymbol{)}\AgdaSpace{}%
\AgdaSymbol{(}\AgdaFunction{ω\textasciicircum{}⊕=⊕ω\textasciicircum{}}\AgdaSpace{}%
\AgdaBound{x}\AgdaSpace{}%
\AgdaBound{y}\AgdaSymbol{)}\AgdaSpace{}%
\AgdaOperator{\AgdaFunction{⟩}}\<%
\\
\>[4][@{}l@{\AgdaIndent{0}}]%
\>[6]\AgdaBound{a}\AgdaSpace{}%
\AgdaOperator{\AgdaFunction{⊕}}\AgdaSpace{}%
\AgdaSymbol{(}\AgdaBound{y}\AgdaSpace{}%
\AgdaOperator{\AgdaFunction{⊕}}\AgdaSpace{}%
\AgdaOperator{\AgdaFunction{H.ω\textasciicircum{}⟨}}\AgdaSpace{}%
\AgdaBound{x}\AgdaSpace{}%
\AgdaOperator{\AgdaFunction{⟩}}\AgdaSymbol{)}\<%
\\
\>[4]\AgdaOperator{\AgdaFunction{≡⟨}}\AgdaSpace{}%
\AgdaFunction{⊕assoc}\AgdaSpace{}%
\AgdaBound{a}\AgdaSpace{}%
\AgdaBound{y}\AgdaSpace{}%
\AgdaOperator{\AgdaFunction{H.ω\textasciicircum{}⟨}}\AgdaSpace{}%
\AgdaBound{x}\AgdaSpace{}%
\AgdaOperator{\AgdaFunction{⟩}}\AgdaSpace{}%
\AgdaOperator{\AgdaFunction{⟩}}\<%
\\
\>[4][@{}l@{\AgdaIndent{0}}]%
\>[6]\AgdaSymbol{(}\AgdaBound{a}\AgdaSpace{}%
\AgdaOperator{\AgdaFunction{⊕}}\AgdaSpace{}%
\AgdaBound{y}\AgdaSymbol{)}\AgdaSpace{}%
\AgdaOperator{\AgdaFunction{⊕}}\AgdaSpace{}%
\AgdaOperator{\AgdaFunction{H.ω\textasciicircum{}⟨}}\AgdaSpace{}%
\AgdaBound{x}\AgdaSpace{}%
\AgdaOperator{\AgdaFunction{⟩}}\<%
\\
\>[4]\AgdaOperator{\AgdaFunction{≡⟨}}\AgdaSpace{}%
\AgdaFunction{cong}\AgdaSpace{}%
\AgdaSymbol{(}\AgdaOperator{\AgdaFunction{\AgdaUnderscore{}⊕}}\AgdaSpace{}%
\AgdaOperator{\AgdaFunction{H.ω\textasciicircum{}⟨}}\AgdaSpace{}%
\AgdaBound{x}\AgdaSpace{}%
\AgdaOperator{\AgdaFunction{⟩}}\AgdaSymbol{)}\AgdaSpace{}%
\AgdaBound{p}\AgdaSpace{}%
\AgdaOperator{\AgdaFunction{⟩}}\<%
\\
\>[4][@{}l@{\AgdaIndent{0}}]%
\>[6]\AgdaSymbol{(}\AgdaBound{y}\AgdaSpace{}%
\AgdaOperator{\AgdaFunction{⊕}}\AgdaSpace{}%
\AgdaBound{a}\AgdaSymbol{)}\AgdaSpace{}%
\AgdaOperator{\AgdaFunction{⊕}}\AgdaSpace{}%
\AgdaOperator{\AgdaFunction{H.ω\textasciicircum{}⟨}}\AgdaSpace{}%
\AgdaBound{x}\AgdaSpace{}%
\AgdaOperator{\AgdaFunction{⟩}}\<%
\\
\>[4]\AgdaOperator{\AgdaFunction{≡⟨}}\AgdaSpace{}%
\AgdaSymbol{(}\AgdaFunction{ω\textasciicircum{}⊕=⊕ω\textasciicircum{}}\AgdaSpace{}%
\AgdaBound{x}\AgdaSpace{}%
\AgdaSymbol{(}\AgdaBound{y}\AgdaSpace{}%
\AgdaOperator{\AgdaFunction{⊕}}\AgdaSpace{}%
\AgdaBound{a}\AgdaSymbol{))}\AgdaSpace{}%
\AgdaOperator{\AgdaFunction{⁻¹}}\AgdaSpace{}%
\AgdaOperator{\AgdaFunction{⟩}}\<%
\\
\>[4][@{}l@{\AgdaIndent{0}}]%
\>[6]\AgdaSymbol{(}\AgdaOperator{\AgdaInductiveConstructor{ω\textasciicircum{}}}\AgdaSpace{}%
\AgdaBound{x}\AgdaSpace{}%
\AgdaOperator{\AgdaInductiveConstructor{⊕}}\AgdaSpace{}%
\AgdaBound{y}\AgdaSymbol{)}\AgdaSpace{}%
\AgdaOperator{\AgdaFunction{⊕}}\AgdaSpace{}%
\AgdaBound{a}%
\>[22]\AgdaOperator{\AgdaFunction{∎}}\<%
\end{code}}
\newcommand{\pathHM}{
\begin{code}%
\>[0]\AgdaFunction{H≡M}\AgdaSpace{}%
\AgdaSymbol{:}\AgdaSpace{}%
\AgdaDatatype{HITOrd}\AgdaSpace{}%
\AgdaOperator{\AgdaFunction{≡}}\AgdaSpace{}%
\AgdaDatatype{MutualOrd}\<%
\\
\>[0]\AgdaFunction{H≡M}\AgdaSpace{}%
\AgdaBound{i}\AgdaSpace{}%
\AgdaSymbol{=}\AgdaSpace{}%
\AgdaFunction{M≡H}\AgdaSpace{}%
\AgdaSymbol{(}\AgdaOperator{\AgdaPrimitive{\textasciitilde{}}}\AgdaSpace{}%
\AgdaBound{i}\AgdaSymbol{)}\<%
\end{code}}
\newcommand{\MHSum}{
\begin{code}%
\>[0]\AgdaOperator{\AgdaFunction{\AgdaUnderscore{}⊕ᴹ\AgdaUnderscore{}}}\AgdaSpace{}%
\AgdaSymbol{:}\AgdaSpace{}%
\AgdaDatatype{MutualOrd}\AgdaSpace{}%
\AgdaSymbol{→}\AgdaSpace{}%
\AgdaDatatype{MutualOrd}\AgdaSpace{}%
\AgdaSymbol{→}\AgdaSpace{}%
\AgdaDatatype{MutualOrd}\<%
\\
\>[0]\AgdaOperator{\AgdaFunction{\AgdaUnderscore{}⊕ᴹ\AgdaUnderscore{}}}\AgdaSpace{}%
\AgdaSymbol{=}\AgdaSpace{}%
\AgdaFunction{transport}\AgdaSpace{}%
\AgdaSymbol{(λ}\AgdaSpace{}%
\AgdaBound{i}\AgdaSpace{}%
\AgdaSymbol{→}\AgdaSpace{}%
\AgdaFunction{H≡M}\AgdaSpace{}%
\AgdaBound{i}\AgdaSpace{}%
\AgdaSymbol{→}\AgdaSpace{}%
\AgdaFunction{H≡M}\AgdaSpace{}%
\AgdaBound{i}\AgdaSpace{}%
\AgdaSymbol{→}\AgdaSpace{}%
\AgdaFunction{H≡M}\AgdaSpace{}%
\AgdaBound{i}\AgdaSymbol{)}\AgdaSpace{}%
\AgdaOperator{\AgdaFunction{\AgdaUnderscore{}⊕\AgdaUnderscore{}}}\<%
\end{code}}
\newcommand{\HSumPath}{
\begin{code}%
\>[0]\AgdaFunction{⊕Path}\AgdaSpace{}%
\AgdaSymbol{:}\AgdaSpace{}%
\AgdaPostulate{PathP}\AgdaSpace{}%
\AgdaSymbol{(λ}\AgdaSpace{}%
\AgdaBound{i}\AgdaSpace{}%
\AgdaSymbol{→}\AgdaSpace{}%
\AgdaFunction{H≡M}\AgdaSpace{}%
\AgdaBound{i}\AgdaSpace{}%
\AgdaSymbol{→}\AgdaSpace{}%
\AgdaFunction{H≡M}\AgdaSpace{}%
\AgdaBound{i}\AgdaSpace{}%
\AgdaSymbol{→}\AgdaSpace{}%
\AgdaFunction{H≡M}\AgdaSpace{}%
\AgdaBound{i}\AgdaSymbol{)}\AgdaSpace{}%
\AgdaOperator{\AgdaFunction{\AgdaUnderscore{}⊕\AgdaUnderscore{}}}\AgdaSpace{}%
\AgdaOperator{\AgdaFunction{\AgdaUnderscore{}⊕ᴹ\AgdaUnderscore{}}}\<%
\end{code}}
\newcommand{\MHSumComm}{
\begin{code}%
\>[0]\AgdaFunction{⊕ᴹcomm}\AgdaSpace{}%
\AgdaSymbol{:}\AgdaSpace{}%
\AgdaFunction{Comm}\AgdaSpace{}%
\AgdaDatatype{MutualOrd}\AgdaSpace{}%
\AgdaOperator{\AgdaFunction{\AgdaUnderscore{}⊕ᴹ\AgdaUnderscore{}}}\<%
\\
\>[0]\AgdaFunction{⊕ᴹcomm}\AgdaSpace{}%
\AgdaSymbol{=}\AgdaSpace{}%
\AgdaFunction{transport}%
\>[585I]\AgdaSymbol{(λ}\AgdaSpace{}%
\AgdaBound{i}\AgdaSpace{}%
\AgdaSymbol{→}\AgdaSpace{}%
\AgdaFunction{Comm}\AgdaSpace{}%
\AgdaSymbol{(}\AgdaFunction{H≡M}\AgdaSpace{}%
\AgdaBound{i}\AgdaSymbol{)}\AgdaSpace{}%
\AgdaSymbol{(}\AgdaFunction{⊕Path}\AgdaSpace{}%
\AgdaBound{i}\AgdaSymbol{))}\<%
\\
\>[.][@{}l@{}]\<[585I]%
\>[19]\AgdaFunction{⊕comm}\<%
\end{code}}
\newcommand{\HPointAdd}{
\begin{code}%
\>[0]\AgdaOperator{\AgdaFunction{\AgdaUnderscore{}∔\AgdaUnderscore{}}}\AgdaSpace{}%
\AgdaSymbol{:}\AgdaSpace{}%
\AgdaDatatype{HITOrd}\AgdaSpace{}%
\AgdaSymbol{→}\AgdaSpace{}%
\AgdaDatatype{HITOrd}\AgdaSpace{}%
\AgdaSymbol{→}\AgdaSpace{}%
\AgdaDatatype{HITOrd}\<%
\\
\>[0]\AgdaInductiveConstructor{𝟎}%
\>[17]\AgdaOperator{\AgdaFunction{∔}}\AgdaSpace{}%
\AgdaBound{b}\AgdaSpace{}%
\AgdaSymbol{=}\AgdaSpace{}%
\AgdaInductiveConstructor{𝟎}\<%
\\
\>[0]\AgdaSymbol{(}\AgdaOperator{\AgdaInductiveConstructor{ω\textasciicircum{}}}\AgdaSpace{}%
\AgdaBound{a}\AgdaSpace{}%
\AgdaOperator{\AgdaInductiveConstructor{⊕}}\AgdaSpace{}%
\AgdaBound{c}\AgdaSymbol{)}%
\>[17]\AgdaOperator{\AgdaFunction{∔}}\AgdaSpace{}%
\AgdaBound{b}\AgdaSpace{}%
\AgdaSymbol{=}\AgdaSpace{}%
\AgdaOperator{\AgdaInductiveConstructor{ω\textasciicircum{}}}\AgdaSpace{}%
\AgdaSymbol{(}\AgdaBound{a}\AgdaSpace{}%
\AgdaOperator{\AgdaFunction{⊕}}\AgdaSpace{}%
\AgdaBound{b}\AgdaSymbol{)}\AgdaSpace{}%
\AgdaOperator{\AgdaInductiveConstructor{⊕}}\AgdaSpace{}%
\AgdaSymbol{(}\AgdaBound{c}\AgdaSpace{}%
\AgdaOperator{\AgdaFunction{∔}}\AgdaSpace{}%
\AgdaBound{b}\AgdaSymbol{)}\<%
\\
\>[0]\AgdaSymbol{(}\AgdaInductiveConstructor{swap}\AgdaSpace{}%
\AgdaBound{x}\AgdaSpace{}%
\AgdaBound{y}\AgdaSpace{}%
\AgdaBound{z}\AgdaSpace{}%
\AgdaBound{i}\AgdaSymbol{)}%
\>[17]\AgdaOperator{\AgdaFunction{∔}}\AgdaSpace{}%
\AgdaBound{b}\AgdaSpace{}%
\AgdaSymbol{=}\AgdaSpace{}%
\AgdaInductiveConstructor{swap}\AgdaSpace{}%
\AgdaSymbol{(}\AgdaBound{x}\AgdaSpace{}%
\AgdaOperator{\AgdaFunction{⊕}}\AgdaSpace{}%
\AgdaBound{b}\AgdaSymbol{)}\AgdaSpace{}%
\AgdaSymbol{(}\AgdaBound{y}\AgdaSpace{}%
\AgdaOperator{\AgdaFunction{⊕}}\AgdaSpace{}%
\AgdaBound{b}\AgdaSymbol{)}\AgdaSpace{}%
\AgdaSymbol{(}\AgdaBound{z}\AgdaSpace{}%
\AgdaOperator{\AgdaFunction{∔}}\AgdaSpace{}%
\AgdaBound{b}\AgdaSymbol{)}\AgdaSpace{}%
\AgdaBound{i}\<%
\\
\>[0]\AgdaSymbol{(}\AgdaInductiveConstructor{trunc}\AgdaSpace{}%
\AgdaBound{p}\AgdaSpace{}%
\AgdaBound{q}\AgdaSpace{}%
\AgdaBound{i}\AgdaSpace{}%
\AgdaBound{j}\AgdaSymbol{)}%
\>[17]\AgdaOperator{\AgdaFunction{∔}}\AgdaSpace{}%
\AgdaBound{b}\AgdaSpace{}%
\AgdaSymbol{=}\AgdaSpace{}%
\AgdaInductiveConstructor{trunc}\AgdaSpace{}%
\AgdaSymbol{(}\AgdaFunction{cong}\AgdaSpace{}%
\AgdaSymbol{(}\AgdaOperator{\AgdaFunction{\AgdaUnderscore{}∔}}\AgdaSpace{}%
\AgdaBound{b}\AgdaSymbol{)}\AgdaSpace{}%
\AgdaBound{p}\AgdaSymbol{)}\AgdaSpace{}%
\AgdaSymbol{(}\AgdaFunction{cong}\AgdaSpace{}%
\AgdaSymbol{(}\AgdaOperator{\AgdaFunction{\AgdaUnderscore{}∔}}\AgdaSpace{}%
\AgdaBound{b}\AgdaSymbol{)}\AgdaSpace{}%
\AgdaBound{q}\AgdaSymbol{)}\AgdaSpace{}%
\AgdaBound{i}\AgdaSpace{}%
\AgdaBound{j}\<%
\end{code}}
\newcommand{\HSumSwap}{
\begin{code}%
\>[0]\AgdaFunction{⊕swap}\AgdaSpace{}%
\AgdaSymbol{:}\AgdaSpace{}%
\AgdaSymbol{∀}\AgdaSpace{}%
\AgdaBound{a}\AgdaSpace{}%
\AgdaBound{b}\AgdaSpace{}%
\AgdaBound{c}\AgdaSpace{}%
\AgdaSymbol{→}\AgdaSpace{}%
\AgdaBound{a}\AgdaSpace{}%
\AgdaOperator{\AgdaFunction{⊕}}\AgdaSpace{}%
\AgdaBound{b}\AgdaSpace{}%
\AgdaOperator{\AgdaFunction{⊕}}\AgdaSpace{}%
\AgdaBound{c}\AgdaSpace{}%
\AgdaOperator{\AgdaFunction{≡}}\AgdaSpace{}%
\AgdaBound{b}\AgdaSpace{}%
\AgdaOperator{\AgdaFunction{⊕}}\AgdaSpace{}%
\AgdaBound{a}\AgdaSpace{}%
\AgdaOperator{\AgdaFunction{⊕}}\AgdaSpace{}%
\AgdaBound{c}\<%
\end{code}}
\newcommand{\HProd}{
\begin{code}%
\>[0]\AgdaOperator{\AgdaFunction{\AgdaUnderscore{}⊗\AgdaUnderscore{}}}\AgdaSpace{}%
\AgdaSymbol{:}\AgdaSpace{}%
\AgdaDatatype{HITOrd}\AgdaSpace{}%
\AgdaSymbol{→}\AgdaSpace{}%
\AgdaDatatype{HITOrd}\AgdaSpace{}%
\AgdaSymbol{→}\AgdaSpace{}%
\AgdaDatatype{HITOrd}\<%
\\
\>[0]\AgdaBound{a}\AgdaSpace{}%
\AgdaOperator{\AgdaFunction{⊗}}\AgdaSpace{}%
\AgdaInductiveConstructor{𝟎}%
\>[21]\AgdaSymbol{=}\AgdaSpace{}%
\AgdaInductiveConstructor{𝟎}\<%
\\
\>[0]\AgdaBound{a}\AgdaSpace{}%
\AgdaOperator{\AgdaFunction{⊗}}\AgdaSpace{}%
\AgdaSymbol{(}\AgdaOperator{\AgdaInductiveConstructor{ω\textasciicircum{}}}\AgdaSpace{}%
\AgdaBound{b}\AgdaSpace{}%
\AgdaOperator{\AgdaInductiveConstructor{⊕}}\AgdaSpace{}%
\AgdaBound{c}\AgdaSymbol{)}%
\>[21]\AgdaSymbol{=}\AgdaSpace{}%
\AgdaSymbol{(}\AgdaBound{a}\AgdaSpace{}%
\AgdaOperator{\AgdaFunction{∔}}\AgdaSpace{}%
\AgdaBound{b}\AgdaSymbol{)}\AgdaSpace{}%
\AgdaOperator{\AgdaFunction{⊕}}\AgdaSpace{}%
\AgdaSymbol{(}\AgdaBound{a}\AgdaSpace{}%
\AgdaOperator{\AgdaFunction{⊗}}\AgdaSpace{}%
\AgdaBound{c}\AgdaSymbol{)}\<%
\\
\>[0]\AgdaBound{a}\AgdaSpace{}%
\AgdaOperator{\AgdaFunction{⊗}}\AgdaSpace{}%
\AgdaSymbol{(}\AgdaInductiveConstructor{swap}\AgdaSpace{}%
\AgdaBound{x}\AgdaSpace{}%
\AgdaBound{y}\AgdaSpace{}%
\AgdaBound{z}\AgdaSpace{}%
\AgdaBound{i}\AgdaSymbol{)}%
\>[21]\AgdaSymbol{=}\AgdaSpace{}%
\AgdaFunction{⊕swap}\AgdaSpace{}%
\AgdaSymbol{(}\AgdaBound{a}\AgdaSpace{}%
\AgdaOperator{\AgdaFunction{∔}}\AgdaSpace{}%
\AgdaBound{x}\AgdaSymbol{)}\AgdaSpace{}%
\AgdaSymbol{(}\AgdaBound{a}\AgdaSpace{}%
\AgdaOperator{\AgdaFunction{∔}}\AgdaSpace{}%
\AgdaBound{y}\AgdaSymbol{)}\AgdaSpace{}%
\AgdaSymbol{(}\AgdaBound{a}\AgdaSpace{}%
\AgdaOperator{\AgdaFunction{⊗}}\AgdaSpace{}%
\AgdaBound{z}\AgdaSymbol{)}\AgdaSpace{}%
\AgdaBound{i}\<%
\\
\>[0]\AgdaBound{a}\AgdaSpace{}%
\AgdaOperator{\AgdaFunction{⊗}}\AgdaSpace{}%
\AgdaSymbol{(}\AgdaInductiveConstructor{trunc}\AgdaSpace{}%
\AgdaBound{p}\AgdaSpace{}%
\AgdaBound{q}\AgdaSpace{}%
\AgdaBound{i}\AgdaSpace{}%
\AgdaBound{j}\AgdaSymbol{)}%
\>[21]\AgdaSymbol{=}\AgdaSpace{}%
\AgdaInductiveConstructor{trunc}\AgdaSpace{}%
\AgdaSymbol{(}\AgdaFunction{cong}\AgdaSpace{}%
\AgdaSymbol{(}\AgdaBound{a}\AgdaSpace{}%
\AgdaOperator{\AgdaFunction{⊗\AgdaUnderscore{}}}\AgdaSymbol{)}\AgdaSpace{}%
\AgdaBound{p}\AgdaSymbol{)}\AgdaSpace{}%
\AgdaSymbol{(}\AgdaFunction{cong}\AgdaSpace{}%
\AgdaSymbol{(}\AgdaBound{a}\AgdaSpace{}%
\AgdaOperator{\AgdaFunction{⊗\AgdaUnderscore{}}}\AgdaSymbol{)}\AgdaSpace{}%
\AgdaBound{q}\AgdaSymbol{)}\AgdaSpace{}%
\AgdaBound{i}\AgdaSpace{}%
\AgdaBound{j}\<%
\end{code}}
\newcommand{\MHProd}{
\begin{code}%
\>[0]\AgdaOperator{\AgdaFunction{\AgdaUnderscore{}⊗ᴹ\AgdaUnderscore{}}}\AgdaSpace{}%
\AgdaSymbol{:}\AgdaSpace{}%
\AgdaDatatype{MutualOrd}\AgdaSpace{}%
\AgdaSymbol{→}\AgdaSpace{}%
\AgdaDatatype{MutualOrd}\AgdaSpace{}%
\AgdaSymbol{→}\AgdaSpace{}%
\AgdaDatatype{MutualOrd}\<%
\\
\>[0]\AgdaOperator{\AgdaFunction{\AgdaUnderscore{}⊗ᴹ\AgdaUnderscore{}}}\AgdaSpace{}%
\AgdaSymbol{=}\AgdaSpace{}%
\AgdaFunction{transport}\AgdaSpace{}%
\AgdaSymbol{(λ}\AgdaSpace{}%
\AgdaBound{i}\AgdaSpace{}%
\AgdaSymbol{→}\AgdaSpace{}%
\AgdaFunction{H≡M}\AgdaSpace{}%
\AgdaBound{i}\AgdaSpace{}%
\AgdaSymbol{→}\AgdaSpace{}%
\AgdaFunction{H≡M}\AgdaSpace{}%
\AgdaBound{i}\AgdaSpace{}%
\AgdaSymbol{→}\AgdaSpace{}%
\AgdaFunction{H≡M}\AgdaSpace{}%
\AgdaBound{i}\AgdaSymbol{)}\AgdaSpace{}%
\AgdaOperator{\AgdaFunction{\AgdaUnderscore{}⊗\AgdaUnderscore{}}}\<%
\end{code}}
\newcommand{\MPlusType}{
\begin{code}%
\>[0]\AgdaOperator{\AgdaFunction{\AgdaUnderscore{}+\AgdaUnderscore{}}}%
\>[7]\AgdaSymbol{:}\AgdaSpace{}%
\AgdaDatatype{MutualOrd}\AgdaSpace{}%
\AgdaSymbol{→}\AgdaSpace{}%
\AgdaDatatype{MutualOrd}\AgdaSpace{}%
\AgdaSymbol{→}\AgdaSpace{}%
\AgdaDatatype{MutualOrd}\<%
\\
\>[0]\AgdaFunction{≥fst+}%
\>[7]\AgdaSymbol{:}\AgdaSpace{}%
\AgdaSymbol{\{}\AgdaBound{a}\AgdaSpace{}%
\AgdaSymbol{:}\AgdaSpace{}%
\AgdaDatatype{MutualOrd}\AgdaSymbol{\}}\AgdaSpace{}%
\AgdaSymbol{(}\AgdaBound{b}\AgdaSpace{}%
\AgdaBound{c}\AgdaSpace{}%
\AgdaSymbol{:}\AgdaSpace{}%
\AgdaDatatype{MutualOrd}\AgdaSymbol{)}\<%
\\
\>[7]\AgdaSymbol{→}\AgdaSpace{}%
\AgdaBound{a}\AgdaSpace{}%
\AgdaOperator{\AgdaFunction{≥}}\AgdaSpace{}%
\AgdaFunction{fst}\AgdaSpace{}%
\AgdaBound{b}\AgdaSpace{}%
\AgdaSymbol{→}\AgdaSpace{}%
\AgdaBound{a}\AgdaSpace{}%
\AgdaOperator{\AgdaFunction{≥}}\AgdaSpace{}%
\AgdaFunction{fst}\AgdaSpace{}%
\AgdaBound{c}\AgdaSpace{}%
\AgdaSymbol{→}\AgdaSpace{}%
\AgdaBound{a}\AgdaSpace{}%
\AgdaOperator{\AgdaFunction{≥}}\AgdaSpace{}%
\AgdaFunction{fst}\AgdaSpace{}%
\AgdaSymbol{(}\AgdaBound{b}\AgdaSpace{}%
\AgdaOperator{\AgdaFunction{+}}\AgdaSpace{}%
\AgdaBound{c}\AgdaSymbol{)}\<%
\end{code}}
\newcommand{\MPlusDef}{
\begin{code}%
\>[0]\AgdaInductiveConstructor{𝟎}\AgdaSpace{}%
\AgdaOperator{\AgdaFunction{+}}\AgdaSpace{}%
\AgdaBound{b}\AgdaSpace{}%
\AgdaSymbol{=}\AgdaSpace{}%
\AgdaBound{b}\<%
\\
\>[0]\AgdaCatchallClause{\AgdaBound{a}}\AgdaSpace{}%
\AgdaCatchallClause{\AgdaOperator{\AgdaFunction{+}}}\AgdaSpace{}%
\AgdaCatchallClause{\AgdaInductiveConstructor{𝟎}}\AgdaSpace{}%
\AgdaSymbol{=}\AgdaSpace{}%
\AgdaBound{a}\<%
\\
\>[0]\AgdaSymbol{(}\AgdaOperator{\AgdaInductiveConstructor{ω\textasciicircum{}}}\AgdaSpace{}%
\AgdaBound{a}\AgdaSpace{}%
\AgdaOperator{\AgdaInductiveConstructor{+}}\AgdaSpace{}%
\AgdaBound{c}\AgdaSpace{}%
\AgdaOperator{\AgdaInductiveConstructor{[}}\AgdaSpace{}%
\AgdaBound{r}\AgdaSpace{}%
\AgdaOperator{\AgdaInductiveConstructor{]}}\AgdaSymbol{)}\AgdaSpace{}%
\AgdaOperator{\AgdaFunction{+}}\AgdaSpace{}%
\AgdaSymbol{(}\AgdaOperator{\AgdaInductiveConstructor{ω\textasciicircum{}}}\AgdaSpace{}%
\AgdaBound{b}\AgdaSpace{}%
\AgdaOperator{\AgdaInductiveConstructor{+}}\AgdaSpace{}%
\AgdaBound{d}\AgdaSpace{}%
\AgdaOperator{\AgdaInductiveConstructor{[}}\AgdaSpace{}%
\AgdaBound{s}\AgdaSpace{}%
\AgdaOperator{\AgdaInductiveConstructor{]}}\AgdaSymbol{)}\AgdaSpace{}%
\AgdaKeyword{with}\AgdaSpace{}%
\AgdaFunction{<-tri}\AgdaSpace{}%
\AgdaBound{a}\AgdaSpace{}%
\AgdaBound{b}\<%
\\
\>[0]\AgdaSymbol{...}\AgdaSpace{}%
\AgdaSymbol{|}\AgdaSpace{}%
\AgdaInductiveConstructor{inj₁}\AgdaSpace{}%
\AgdaBound{a<b}%
\>[16]\AgdaSymbol{=}\AgdaSpace{}%
\AgdaOperator{\AgdaInductiveConstructor{ω\textasciicircum{}}}\AgdaSpace{}%
\AgdaBound{b}\AgdaSpace{}%
\AgdaOperator{\AgdaInductiveConstructor{+}}\AgdaSpace{}%
\AgdaBound{d}\AgdaSpace{}%
\AgdaOperator{\AgdaInductiveConstructor{[}}\AgdaSpace{}%
\AgdaBound{s}\AgdaSpace{}%
\AgdaOperator{\AgdaInductiveConstructor{]}}\<%
\\
\>[0]\AgdaSymbol{...}\AgdaSpace{}%
\AgdaSymbol{|}\AgdaSpace{}%
\AgdaInductiveConstructor{inj₂}\AgdaSpace{}%
\AgdaBound{a≥b}%
\>[16]\AgdaSymbol{=}\AgdaSpace{}%
\AgdaOperator{\AgdaInductiveConstructor{ω\textasciicircum{}}}\AgdaSpace{}%
\AgdaBound{a}\AgdaSpace{}%
\AgdaOperator{\AgdaInductiveConstructor{+}}\AgdaSpace{}%
\AgdaSymbol{(}\AgdaBound{c}\AgdaSpace{}%
\AgdaOperator{\AgdaFunction{+}}\AgdaSpace{}%
\AgdaOperator{\AgdaInductiveConstructor{ω\textasciicircum{}}}\AgdaSpace{}%
\AgdaBound{b}\AgdaSpace{}%
\AgdaOperator{\AgdaInductiveConstructor{+}}\AgdaSpace{}%
\AgdaBound{d}\AgdaSpace{}%
\AgdaOperator{\AgdaInductiveConstructor{[}}\AgdaSpace{}%
\AgdaBound{s}\AgdaSpace{}%
\AgdaOperator{\AgdaInductiveConstructor{]}}\AgdaSymbol{)}\AgdaSpace{}%
\AgdaOperator{\AgdaInductiveConstructor{[}}\AgdaSpace{}%
\AgdaFunction{≥fst+}\AgdaSpace{}%
\AgdaBound{c}\AgdaSpace{}%
\AgdaSymbol{\AgdaUnderscore{}}\AgdaSpace{}%
\AgdaBound{r}\AgdaSpace{}%
\AgdaBound{a≥b}\AgdaSpace{}%
\AgdaOperator{\AgdaInductiveConstructor{]}}\<%
\end{code}}
\newcommand{\MPlusProof}{
\begin{code}%
\>[0]\AgdaFunction{≥fst+}\AgdaSpace{}%
\AgdaInductiveConstructor{𝟎}\AgdaSpace{}%
\AgdaSymbol{\AgdaUnderscore{}}\AgdaSpace{}%
\AgdaBound{r}\AgdaSpace{}%
\AgdaBound{s}\AgdaSpace{}%
\AgdaSymbol{=}\AgdaSpace{}%
\AgdaBound{s}\<%
\\
\>[0]\AgdaFunction{≥fst+}\AgdaSpace{}%
\AgdaSymbol{(}\AgdaOperator{\AgdaInductiveConstructor{ω\textasciicircum{}}}\AgdaSpace{}%
\AgdaSymbol{\AgdaUnderscore{}}\AgdaSpace{}%
\AgdaOperator{\AgdaInductiveConstructor{+}}\AgdaSpace{}%
\AgdaSymbol{\AgdaUnderscore{}}\AgdaSpace{}%
\AgdaOperator{\AgdaInductiveConstructor{[}}\AgdaSpace{}%
\AgdaSymbol{\AgdaUnderscore{}}\AgdaSpace{}%
\AgdaOperator{\AgdaInductiveConstructor{]}}\AgdaSymbol{)}\AgdaSpace{}%
\AgdaInductiveConstructor{𝟎}\AgdaSpace{}%
\AgdaBound{r}\AgdaSpace{}%
\AgdaBound{s}\AgdaSpace{}%
\AgdaSymbol{=}\AgdaSpace{}%
\AgdaBound{r}\<%
\\
\>[0]\AgdaFunction{≥fst+}\AgdaSpace{}%
\AgdaSymbol{(}\AgdaOperator{\AgdaInductiveConstructor{ω\textasciicircum{}}}\AgdaSpace{}%
\AgdaBound{b}\AgdaSpace{}%
\AgdaOperator{\AgdaInductiveConstructor{+}}\AgdaSpace{}%
\AgdaSymbol{\AgdaUnderscore{}}\AgdaSpace{}%
\AgdaOperator{\AgdaInductiveConstructor{[}}\AgdaSpace{}%
\AgdaSymbol{\AgdaUnderscore{}}\AgdaSpace{}%
\AgdaOperator{\AgdaInductiveConstructor{]}}\AgdaSymbol{)}\AgdaSpace{}%
\AgdaSymbol{(}\AgdaOperator{\AgdaInductiveConstructor{ω\textasciicircum{}}}\AgdaSpace{}%
\AgdaBound{c}\AgdaSpace{}%
\AgdaOperator{\AgdaInductiveConstructor{+}}\AgdaSpace{}%
\AgdaSymbol{\AgdaUnderscore{}}\AgdaSpace{}%
\AgdaOperator{\AgdaInductiveConstructor{[}}\AgdaSpace{}%
\AgdaSymbol{\AgdaUnderscore{}}\AgdaSpace{}%
\AgdaOperator{\AgdaInductiveConstructor{]}}\AgdaSymbol{)}\AgdaSpace{}%
\AgdaBound{r}\AgdaSpace{}%
\AgdaBound{s}\AgdaSpace{}%
\AgdaKeyword{with}\AgdaSpace{}%
\AgdaFunction{<-tri}\AgdaSpace{}%
\AgdaBound{b}\AgdaSpace{}%
\AgdaBound{c}\<%
\\
\>[0]\AgdaSymbol{...}\AgdaSpace{}%
\AgdaSymbol{|}\AgdaSpace{}%
\AgdaInductiveConstructor{inj₁}\AgdaSpace{}%
\AgdaBound{b<c}%
\>[16]\AgdaSymbol{=}\AgdaSpace{}%
\AgdaBound{s}\<%
\\
\>[0]\AgdaSymbol{...}\AgdaSpace{}%
\AgdaSymbol{|}\AgdaSpace{}%
\AgdaInductiveConstructor{inj₂}\AgdaSpace{}%
\AgdaBound{b≥c}%
\>[16]\AgdaSymbol{=}\AgdaSpace{}%
\AgdaBound{r}\<%
\end{code}}
\newcommand{\MPlusAssoc}{
\begin{code}%
\>[0]\AgdaFunction{+assoc}\AgdaSpace{}%
\AgdaSymbol{:}\AgdaSpace{}%
\AgdaFunction{Assoc}\AgdaSpace{}%
\AgdaDatatype{MutualOrd}\AgdaSpace{}%
\AgdaOperator{\AgdaFunction{\AgdaUnderscore{}+\AgdaUnderscore{}}}\<%
\end{code}}
\newcommand{\HPlus}{
\begin{code}%
\>[0]\AgdaOperator{\AgdaFunction{\AgdaUnderscore{}+ᴴ\AgdaUnderscore{}}}\AgdaSpace{}%
\AgdaSymbol{:}\AgdaSpace{}%
\AgdaDatatype{HITOrd}\AgdaSpace{}%
\AgdaSymbol{→}\AgdaSpace{}%
\AgdaDatatype{HITOrd}\AgdaSpace{}%
\AgdaSymbol{→}\AgdaSpace{}%
\AgdaDatatype{HITOrd}\<%
\\
\>[0]\AgdaOperator{\AgdaFunction{\AgdaUnderscore{}+ᴴ\AgdaUnderscore{}}}\AgdaSpace{}%
\AgdaSymbol{=}\AgdaSpace{}%
\AgdaFunction{transport}\AgdaSpace{}%
\AgdaSymbol{(λ}\AgdaSpace{}%
\AgdaBound{i}\AgdaSpace{}%
\AgdaSymbol{→}\AgdaSpace{}%
\AgdaFunction{M≡H}\AgdaSpace{}%
\AgdaBound{i}\AgdaSpace{}%
\AgdaSymbol{→}\AgdaSpace{}%
\AgdaFunction{M≡H}\AgdaSpace{}%
\AgdaBound{i}\AgdaSpace{}%
\AgdaSymbol{→}\AgdaSpace{}%
\AgdaFunction{M≡H}\AgdaSpace{}%
\AgdaBound{i}\AgdaSymbol{)}\AgdaSpace{}%
\AgdaOperator{\AgdaFunction{\AgdaUnderscore{}+\AgdaUnderscore{}}}\<%
\end{code}}
\newcommand{\PlusPath}{
\begin{code}%
\>[0]\AgdaFunction{+Path}\AgdaSpace{}%
\AgdaSymbol{:}\AgdaSpace{}%
\AgdaPostulate{PathP}\AgdaSpace{}%
\AgdaSymbol{(λ}\AgdaSpace{}%
\AgdaBound{i}\AgdaSpace{}%
\AgdaSymbol{→}\AgdaSpace{}%
\AgdaFunction{M≡H}\AgdaSpace{}%
\AgdaBound{i}\AgdaSpace{}%
\AgdaSymbol{→}\AgdaSpace{}%
\AgdaFunction{M≡H}\AgdaSpace{}%
\AgdaBound{i}\AgdaSpace{}%
\AgdaSymbol{→}\AgdaSpace{}%
\AgdaFunction{M≡H}\AgdaSpace{}%
\AgdaBound{i}\AgdaSymbol{)}\AgdaSpace{}%
\AgdaOperator{\AgdaFunction{\AgdaUnderscore{}+\AgdaUnderscore{}}}\AgdaSpace{}%
\AgdaOperator{\AgdaFunction{\AgdaUnderscore{}+ᴴ\AgdaUnderscore{}}}\<%
\end{code}}
\newcommand{\HPlusAssoc}{
\begin{code}%
\>[0]\AgdaFunction{+ᴴassoc}\AgdaSpace{}%
\AgdaSymbol{:}\AgdaSpace{}%
\AgdaFunction{Assoc}\AgdaSpace{}%
\AgdaDatatype{HITOrd}\AgdaSpace{}%
\AgdaOperator{\AgdaFunction{\AgdaUnderscore{}+ᴴ\AgdaUnderscore{}}}\<%
\\
\>[0]\AgdaFunction{+ᴴassoc}\AgdaSpace{}%
\AgdaSymbol{=}\AgdaSpace{}%
\AgdaFunction{transport}\AgdaSpace{}%
\AgdaSymbol{(λ}\AgdaSpace{}%
\AgdaBound{i}\AgdaSpace{}%
\AgdaSymbol{→}\AgdaSpace{}%
\AgdaFunction{Assoc}\AgdaSpace{}%
\AgdaSymbol{(}\AgdaFunction{M≡H}\AgdaSpace{}%
\AgdaBound{i}\AgdaSymbol{)}\AgdaSpace{}%
\AgdaSymbol{(}\AgdaFunction{+Path}\AgdaSpace{}%
\AgdaBound{i}\AgdaSymbol{))}\AgdaSpace{}%
\AgdaFunction{+assoc}\<%
\end{code}}
\newcommand{\MDot}{
\begin{code}%
\>[0]\AgdaOperator{\AgdaFunction{\AgdaUnderscore{}·\AgdaUnderscore{}}}\AgdaSpace{}%
\AgdaSymbol{:}\AgdaSpace{}%
\AgdaDatatype{MutualOrd}\AgdaSpace{}%
\AgdaSymbol{→}\AgdaSpace{}%
\AgdaDatatype{MutualOrd}\AgdaSpace{}%
\AgdaSymbol{→}\AgdaSpace{}%
\AgdaDatatype{MutualOrd}\<%
\\
\>[0]\AgdaInductiveConstructor{𝟎}\AgdaSpace{}%
\AgdaOperator{\AgdaFunction{·}}\AgdaSpace{}%
\AgdaBound{b}\AgdaSpace{}%
\AgdaSymbol{=}\AgdaSpace{}%
\AgdaInductiveConstructor{𝟎}\<%
\\
\>[0]\AgdaCatchallClause{\AgdaBound{a}}\AgdaSpace{}%
\AgdaCatchallClause{\AgdaOperator{\AgdaFunction{·}}}\AgdaSpace{}%
\AgdaCatchallClause{\AgdaInductiveConstructor{𝟎}}\AgdaSpace{}%
\AgdaSymbol{=}\AgdaSpace{}%
\AgdaInductiveConstructor{𝟎}\<%
\\
\>[0]\AgdaCatchallClause{\AgdaBound{a}}\AgdaSpace{}%
\AgdaCatchallClause{\AgdaOperator{\AgdaFunction{·}}}\AgdaSpace{}%
\AgdaCatchallClause{\AgdaSymbol{(}}\AgdaCatchallClause{\AgdaOperator{\AgdaInductiveConstructor{ω\textasciicircum{}}}}\AgdaSpace{}%
\AgdaCatchallClause{\AgdaInductiveConstructor{𝟎}}\AgdaSpace{}%
\AgdaCatchallClause{\AgdaOperator{\AgdaInductiveConstructor{+}}}\AgdaSpace{}%
\AgdaCatchallClause{\AgdaBound{d}}\AgdaSpace{}%
\AgdaCatchallClause{\AgdaOperator{\AgdaInductiveConstructor{[}}}\AgdaSpace{}%
\AgdaCatchallClause{\AgdaBound{r}}\AgdaSpace{}%
\AgdaCatchallClause{\AgdaOperator{\AgdaInductiveConstructor{]}}}\AgdaCatchallClause{\AgdaSymbol{)}}\AgdaSpace{}%
\AgdaSymbol{=}\AgdaSpace{}%
\AgdaBound{a}\AgdaSpace{}%
\AgdaOperator{\AgdaFunction{+}}\AgdaSpace{}%
\AgdaBound{a}\AgdaSpace{}%
\AgdaOperator{\AgdaFunction{·}}\AgdaSpace{}%
\AgdaBound{d}\<%
\\
\>[0]\AgdaCatchallClause{\AgdaSymbol{(}}\AgdaCatchallClause{\AgdaOperator{\AgdaInductiveConstructor{ω\textasciicircum{}}}}\AgdaSpace{}%
\AgdaCatchallClause{\AgdaBound{a}}\AgdaSpace{}%
\AgdaCatchallClause{\AgdaOperator{\AgdaInductiveConstructor{+}}}\AgdaSpace{}%
\AgdaCatchallClause{\AgdaBound{c}}\AgdaSpace{}%
\AgdaCatchallClause{\AgdaOperator{\AgdaInductiveConstructor{[}}}\AgdaSpace{}%
\AgdaCatchallClause{\AgdaBound{r}}\AgdaSpace{}%
\AgdaCatchallClause{\AgdaOperator{\AgdaInductiveConstructor{]}}}\AgdaCatchallClause{\AgdaSymbol{)}}\AgdaSpace{}%
\AgdaCatchallClause{\AgdaOperator{\AgdaFunction{·}}}\AgdaSpace{}%
\AgdaCatchallClause{\AgdaSymbol{(}}\AgdaCatchallClause{\AgdaOperator{\AgdaInductiveConstructor{ω\textasciicircum{}}}}\AgdaSpace{}%
\AgdaCatchallClause{\AgdaBound{b}}\AgdaSpace{}%
\AgdaCatchallClause{\AgdaOperator{\AgdaInductiveConstructor{+}}}\AgdaSpace{}%
\AgdaCatchallClause{\AgdaBound{d}}\AgdaSpace{}%
\AgdaCatchallClause{\AgdaOperator{\AgdaInductiveConstructor{[}}}\AgdaSpace{}%
\AgdaCatchallClause{\AgdaBound{s}}\AgdaSpace{}%
\AgdaCatchallClause{\AgdaOperator{\AgdaInductiveConstructor{]}}}\AgdaCatchallClause{\AgdaSymbol{)}}\AgdaSpace{}%
\AgdaSymbol{=}\<%
\\
\>[0][@{}l@{\AgdaIndent{0}}]%
\>[2]\AgdaOperator{\AgdaFunction{M.ω\textasciicircum{}⟨}}\AgdaSpace{}%
\AgdaBound{a}\AgdaSpace{}%
\AgdaOperator{\AgdaFunction{+}}\AgdaSpace{}%
\AgdaBound{b}\AgdaSpace{}%
\AgdaOperator{\AgdaFunction{⟩}}\AgdaSpace{}%
\AgdaOperator{\AgdaFunction{+}}\AgdaSpace{}%
\AgdaSymbol{(}\AgdaOperator{\AgdaInductiveConstructor{ω\textasciicircum{}}}\AgdaSpace{}%
\AgdaBound{a}\AgdaSpace{}%
\AgdaOperator{\AgdaInductiveConstructor{+}}\AgdaSpace{}%
\AgdaBound{c}\AgdaSpace{}%
\AgdaOperator{\AgdaInductiveConstructor{[}}\AgdaSpace{}%
\AgdaBound{r}\AgdaSpace{}%
\AgdaOperator{\AgdaInductiveConstructor{]}}\AgdaSpace{}%
\AgdaOperator{\AgdaFunction{·}}\AgdaSpace{}%
\AgdaBound{d}\AgdaSymbol{)}\<%
\end{code}}
\newcommand{\HDot}{
\begin{code}%
\>[0]\AgdaOperator{\AgdaFunction{\AgdaUnderscore{}·ᴴ\AgdaUnderscore{}}}\AgdaSpace{}%
\AgdaSymbol{:}\AgdaSpace{}%
\AgdaDatatype{HITOrd}\AgdaSpace{}%
\AgdaSymbol{→}\AgdaSpace{}%
\AgdaDatatype{HITOrd}\AgdaSpace{}%
\AgdaSymbol{→}\AgdaSpace{}%
\AgdaDatatype{HITOrd}\<%
\\
\>[0]\AgdaOperator{\AgdaFunction{\AgdaUnderscore{}·ᴴ\AgdaUnderscore{}}}\AgdaSpace{}%
\AgdaSymbol{=}\AgdaSpace{}%
\AgdaFunction{transport}\AgdaSpace{}%
\AgdaSymbol{(λ}\AgdaSpace{}%
\AgdaBound{i}\AgdaSpace{}%
\AgdaSymbol{→}\AgdaSpace{}%
\AgdaFunction{M≡H}\AgdaSpace{}%
\AgdaBound{i}\AgdaSpace{}%
\AgdaSymbol{→}\AgdaSpace{}%
\AgdaFunction{M≡H}\AgdaSpace{}%
\AgdaBound{i}\AgdaSpace{}%
\AgdaSymbol{→}\AgdaSpace{}%
\AgdaFunction{M≡H}\AgdaSpace{}%
\AgdaBound{i}\AgdaSymbol{)}\AgdaSpace{}%
\AgdaOperator{\AgdaFunction{\AgdaUnderscore{}·\AgdaUnderscore{}}}\<%
\end{code}}
\newcommand{\ExNonCommAdd}{
\begin{code}%
\>[0]\AgdaFunction{Ex[+NonComm]}%
\>[2506I]\AgdaSymbol{:}%
\>[16]\AgdaFunction{M.𝟏}\AgdaSpace{}%
\AgdaOperator{\AgdaFunction{+}}\AgdaSpace{}%
\AgdaFunction{M.ω}%
\>[27]\AgdaOperator{\AgdaFunction{≡}}\AgdaSpace{}%
\AgdaFunction{M.ω}\<%
\\
\>[.][@{}l@{}]\<[2506I]%
\>[13]\AgdaOperator{\AgdaFunction{×}}%
\>[16]\AgdaFunction{M.ω}\AgdaSpace{}%
\AgdaOperator{\AgdaFunction{+}}\AgdaSpace{}%
\AgdaFunction{M.𝟏}%
\>[27]\AgdaOperator{\AgdaFunction{>}}\AgdaSpace{}%
\AgdaFunction{M.ω}\<%
\\
\>[0]\AgdaFunction{Ex[+NonComm]}\AgdaSpace{}%
\AgdaSymbol{=}\AgdaSpace{}%
\AgdaSymbol{(}\AgdaFunction{refl}\AgdaSpace{}%
\AgdaOperator{\AgdaInductiveConstructor{,}}\AgdaSpace{}%
\AgdaInductiveConstructor{<₃}\AgdaSpace{}%
\AgdaFunction{refl}\AgdaSpace{}%
\AgdaInductiveConstructor{<₁}\AgdaSymbol{)}\<%
\end{code}}
\newcommand{\ExNonCommMul}{
\begin{code}%
\>[0]\AgdaFunction{Ex[·NonComm]}%
\>[2519I]\AgdaSymbol{:}%
\>[16]\AgdaSymbol{(}\AgdaFunction{M.𝟏}\AgdaSpace{}%
\AgdaOperator{\AgdaFunction{+}}\AgdaSpace{}%
\AgdaFunction{M.𝟏}\AgdaSymbol{)}\AgdaSpace{}%
\AgdaOperator{\AgdaFunction{·}}\AgdaSpace{}%
\AgdaFunction{M.ω}%
\>[35]\AgdaOperator{\AgdaFunction{≡}}\AgdaSpace{}%
\AgdaFunction{M.ω}\<%
\\
\>[.][@{}l@{}]\<[2519I]%
\>[13]\AgdaOperator{\AgdaFunction{×}}%
\>[16]\AgdaFunction{M.ω}%
\>[35]\AgdaOperator{\AgdaDatatype{<}}\AgdaSpace{}%
\AgdaFunction{M.ω}\AgdaSpace{}%
\AgdaOperator{\AgdaFunction{+}}\AgdaSpace{}%
\AgdaFunction{M.ω}\<%
\\
\>[13]\AgdaOperator{\AgdaFunction{×}}%
\>[16]\AgdaFunction{M.ω}\AgdaSpace{}%
\AgdaOperator{\AgdaFunction{+}}\AgdaSpace{}%
\AgdaFunction{M.ω}%
\>[35]\AgdaOperator{\AgdaFunction{≡}}\AgdaSpace{}%
\AgdaFunction{M.ω}\AgdaSpace{}%
\AgdaOperator{\AgdaFunction{·}}\AgdaSpace{}%
\AgdaSymbol{(}\AgdaFunction{M.𝟏}\AgdaSpace{}%
\AgdaOperator{\AgdaFunction{+}}\AgdaSpace{}%
\AgdaFunction{M.𝟏}\AgdaSymbol{)}\<%
\\
\>[0]\AgdaFunction{Ex[·NonComm]}\AgdaSpace{}%
\AgdaSymbol{=}\AgdaSpace{}%
\AgdaSymbol{(}\AgdaFunction{refl}\AgdaSpace{}%
\AgdaOperator{\AgdaInductiveConstructor{,}}\AgdaSpace{}%
\AgdaInductiveConstructor{<₃}\AgdaSpace{}%
\AgdaFunction{refl}\AgdaSpace{}%
\AgdaInductiveConstructor{<₁}\AgdaSpace{}%
\AgdaOperator{\AgdaInductiveConstructor{,}}\AgdaSpace{}%
\AgdaFunction{refl}\AgdaSymbol{)}\<%
\end{code}}
\newcommand{\ExCompHAdd}{
\begin{code}%
\>[0]\AgdaFunction{Ex[+ᴴComp]}\AgdaSpace{}%
\AgdaSymbol{:}\AgdaSpace{}%
\AgdaFunction{H.𝟏}\AgdaSpace{}%
\AgdaOperator{\AgdaFunction{+ᴴ}}\AgdaSpace{}%
\AgdaFunction{H.ω}\AgdaSpace{}%
\AgdaOperator{\AgdaFunction{≡}}\AgdaSpace{}%
\AgdaOperator{\AgdaInductiveConstructor{ω\textasciicircum{}}}\AgdaSpace{}%
\AgdaSymbol{(}\AgdaOperator{\AgdaInductiveConstructor{ω\textasciicircum{}}}\AgdaSpace{}%
\AgdaInductiveConstructor{𝟎}\AgdaSpace{}%
\AgdaOperator{\AgdaInductiveConstructor{⊕}}\AgdaSpace{}%
\AgdaInductiveConstructor{𝟎}\AgdaSymbol{)}\AgdaSpace{}%
\AgdaOperator{\AgdaInductiveConstructor{⊕}}\AgdaSpace{}%
\AgdaInductiveConstructor{𝟎}\<%
\\
\>[0]\AgdaFunction{Ex[+ᴴComp]}\AgdaSpace{}%
\AgdaSymbol{=}\AgdaSpace{}%
\AgdaFunction{refl}\<%
\end{code}}
\newcommand{\ExCompHMul}{
\begin{code}%
\>[0]\AgdaFunction{Ex[·ᴴComp]}%
\>[2557I]\AgdaSymbol{:}\AgdaSpace{}%
\AgdaFunction{H.ω}\AgdaSpace{}%
\AgdaOperator{\AgdaFunction{·ᴴ}}\AgdaSpace{}%
\AgdaSymbol{(}\AgdaFunction{H.𝟏}\AgdaSpace{}%
\AgdaOperator{\AgdaFunction{+ᴴ}}\AgdaSpace{}%
\AgdaFunction{H.𝟏}\AgdaSymbol{)}\<%
\\
\>[.][@{}l@{}]\<[2557I]%
\>[11]\AgdaOperator{\AgdaFunction{≡}}\AgdaSpace{}%
\AgdaOperator{\AgdaInductiveConstructor{ω\textasciicircum{}}}\AgdaSpace{}%
\AgdaSymbol{(}\AgdaOperator{\AgdaInductiveConstructor{ω\textasciicircum{}}}\AgdaSpace{}%
\AgdaInductiveConstructor{𝟎}\AgdaSpace{}%
\AgdaOperator{\AgdaInductiveConstructor{⊕}}\AgdaSpace{}%
\AgdaInductiveConstructor{𝟎}\AgdaSymbol{)}\AgdaSpace{}%
\AgdaOperator{\AgdaInductiveConstructor{⊕}}\AgdaSpace{}%
\AgdaOperator{\AgdaInductiveConstructor{ω\textasciicircum{}}}\AgdaSpace{}%
\AgdaSymbol{(}\AgdaOperator{\AgdaInductiveConstructor{ω\textasciicircum{}}}\AgdaSpace{}%
\AgdaInductiveConstructor{𝟎}\AgdaSpace{}%
\AgdaOperator{\AgdaInductiveConstructor{⊕}}\AgdaSpace{}%
\AgdaInductiveConstructor{𝟎}\AgdaSymbol{)}\AgdaSpace{}%
\AgdaOperator{\AgdaInductiveConstructor{⊕}}\AgdaSpace{}%
\AgdaInductiveConstructor{𝟎}\<%
\\
\>[0]\AgdaFunction{Ex[·ᴴComp]}\AgdaSpace{}%
\AgdaSymbol{=}\AgdaSpace{}%
\AgdaFunction{refl}\<%
\end{code}}
\newcommand{\ExHsbAdd}{
\begin{code}%
\>[0]\AgdaFunction{Ex[⊕concat]}\AgdaSpace{}%
\AgdaSymbol{:}\<%
\\
\>[0][@{}l@{\AgdaIndent{0}}]%
\>[4]\AgdaFunction{H.𝟏}%
\>[10]\AgdaOperator{\AgdaFunction{⊕}}\AgdaSpace{}%
\AgdaOperator{\AgdaFunction{H.ω\textasciicircum{}⟨}}\AgdaSpace{}%
\AgdaFunction{H.ω}\AgdaSpace{}%
\AgdaOperator{\AgdaFunction{⟩}}%
\>[36]\AgdaOperator{\AgdaFunction{⊕}}\AgdaSpace{}%
\AgdaFunction{H.ω}\<%
\\
\>[0][@{}l@{\AgdaIndent{0}}]%
\>[1]\AgdaOperator{\AgdaFunction{≡}}%
\>[4]\AgdaOperator{\AgdaInductiveConstructor{ω\textasciicircum{}}}\AgdaSpace{}%
\AgdaInductiveConstructor{𝟎}%
\>[10]\AgdaOperator{\AgdaInductiveConstructor{⊕}}\AgdaSpace{}%
\AgdaOperator{\AgdaInductiveConstructor{ω\textasciicircum{}}}\AgdaSpace{}%
\AgdaSymbol{(}\AgdaOperator{\AgdaInductiveConstructor{ω\textasciicircum{}}}\AgdaSpace{}%
\AgdaSymbol{(}\AgdaOperator{\AgdaInductiveConstructor{ω\textasciicircum{}}}\AgdaSpace{}%
\AgdaInductiveConstructor{𝟎}\AgdaSpace{}%
\AgdaOperator{\AgdaInductiveConstructor{⊕}}\AgdaSpace{}%
\AgdaInductiveConstructor{𝟎}\AgdaSymbol{)}\AgdaSpace{}%
\AgdaOperator{\AgdaInductiveConstructor{⊕}}\AgdaSpace{}%
\AgdaInductiveConstructor{𝟎}\AgdaSymbol{)}%
\>[36]\AgdaOperator{\AgdaInductiveConstructor{⊕}}\AgdaSpace{}%
\AgdaOperator{\AgdaInductiveConstructor{ω\textasciicircum{}}}\AgdaSpace{}%
\AgdaSymbol{(}\AgdaOperator{\AgdaInductiveConstructor{ω\textasciicircum{}}}\AgdaSpace{}%
\AgdaInductiveConstructor{𝟎}\AgdaSpace{}%
\AgdaOperator{\AgdaInductiveConstructor{⊕}}\AgdaSpace{}%
\AgdaInductiveConstructor{𝟎}\AgdaSymbol{)}\AgdaSpace{}%
\AgdaOperator{\AgdaInductiveConstructor{⊕}}\AgdaSpace{}%
\AgdaInductiveConstructor{𝟎}\<%
\\
\>[0]\AgdaFunction{Ex[⊕concat]}\AgdaSpace{}%
\AgdaSymbol{=}\AgdaSpace{}%
\AgdaFunction{refl}\<%
\end{code}}
\newcommand{\ExCompMHAdd}{
\begin{code}%
\>[0]\AgdaFunction{Ex[⊕ᴹComp]}\AgdaSpace{}%
\AgdaSymbol{:}\AgdaSpace{}%
\AgdaFunction{M.𝟏}\AgdaSpace{}%
\AgdaOperator{\AgdaFunction{⊕ᴹ}}\AgdaSpace{}%
\AgdaFunction{M.ω}\AgdaSpace{}%
\AgdaOperator{\AgdaFunction{≡}}\AgdaSpace{}%
\AgdaFunction{M.ω}\AgdaSpace{}%
\AgdaOperator{\AgdaFunction{+}}\AgdaSpace{}%
\AgdaFunction{M.𝟏}\<%
\\
\>[0]\AgdaFunction{Ex[⊕ᴹComp]}\AgdaSpace{}%
\AgdaSymbol{=}\AgdaSpace{}%
\AgdaFunction{refl}\<%
\end{code}}
\newcommand{\ExCompMHMul}{
\begin{code}%
\>[0]\AgdaFunction{Ex[⊗ᴹComp]}\AgdaSpace{}%
\AgdaSymbol{:}\AgdaSpace{}%
\AgdaSymbol{(}\AgdaFunction{M.𝟏}\AgdaSpace{}%
\AgdaOperator{\AgdaFunction{+}}\AgdaSpace{}%
\AgdaFunction{M.𝟏}\AgdaSymbol{)}\AgdaSpace{}%
\AgdaOperator{\AgdaFunction{⊗ᴹ}}\AgdaSpace{}%
\AgdaFunction{M.ω}\AgdaSpace{}%
\AgdaOperator{\AgdaFunction{≡}}\AgdaSpace{}%
\AgdaFunction{M.ω}\AgdaSpace{}%
\AgdaOperator{\AgdaFunction{+}}\AgdaSpace{}%
\AgdaFunction{M.ω}\<%
\\
\>[0]\AgdaFunction{Ex[⊗ᴹComp]}\AgdaSpace{}%
\AgdaSymbol{=}\AgdaSpace{}%
\AgdaFunction{refl}\<%
\end{code}}
\begin{document}

\sloppy

\title[Three Equivalent Ordinal Notation Systems in Cubical Agda]
      {Three Equivalent Ordinal Notation Systems \\ in Cubical Agda}

\author{Fredrik Nordvall Forsberg}
\orcid{0000-0001-6157-9288}
\affiliation{
  \department{Computer and Information Sciences}
  \institution{University of Strathclyde}
  \streetaddress{Livingstone Tower, 26 Richmond Street}
  \city{Glasgow}
  \postcode{G1 1XH}
  \country{United Kingdom}
}
\email{fredrik.nordvall-forsberg@strath.ac.uk}

\author{Chuangjie Xu}
\orcid{0000-0001-6838-4221}
\affiliation{
  \department{Mathematisches Institut}
  \institution{Ludwig-Maximilians-Universit\"at M\"unchen}
  \streetaddress{Theresienstr.~39}
  \city{Munich}
  \postcode{80333}
  \country{Germany}
}
\email{cj-xu@outlook.com}

\author{Neil Ghani}
\orcid{0000-0002-3988-2560}
\affiliation{
  \department{Computer and Information Sciences}
  \institution{University of Strathclyde}
  \streetaddress{Livingstone Tower, 26 Richmond Street}
  \city{Glasgow}
  \postcode{G1 1XH}
  \country{United Kingdom}
}
\email{neil.ghani@strath.ac.uk}

\begin{abstract}
  We present three ordinal notation systems representing ordinals
  below $\epsn$ in type theory, using recent type-theoretical
  innovations such as mutual inductive-inductive definitions and
  higher inductive types. We show how ordinal arithmetic can be
  developed for these systems, and how they admit a transfinite
  induction principle. We prove that all three notation systems are
  equivalent, so that we can transport results between them using the
  univalence principle. All our constructions have been implemented in
  cubical Agda.
\end{abstract}

\begin{CCSXML}
<ccs2012>
<concept>
<concept_id>10003752.10003790.10003792</concept_id>
<concept_desc>Theory of computation~Proof theory</concept_desc>
<concept_significance>500</concept_significance>
</concept>
<concept>
<concept_id>10003752.10003790.10011740</concept_id>
<concept_desc>Theory of computation~Type theory</concept_desc>
<concept_significance>500</concept_significance>
</concept>
</ccs2012>
\end{CCSXML}

\ccsdesc[500]{Theory of computation~Proof theory}
\ccsdesc[500]{Theory of computation~Type theory}

\keywords{Ordinal notation, Cantor normal form, inductive-inductive definitions, higher inductive types, cubical Agda.}

\maketitle

\section{Introduction}
\label{sec:intro}


Ordinals and ordinal notation systems play an important role in
program verification, since they can be used to prove termination of
programs --- using ordinals to verify that programs terminate was
suggested already by Turing~\cite{turing:ordinals}. The idea is to
assign an ordinal to each input, and then prove that the assigned
ordinal decreases for each recursive call. Hence the program must
terminate by the well-foundedness of the order on ordinals.  At first,
such proofs were carried out using pen and
paper~\cite{Floyd:1967,dershowitz:termination}, but with advances in
proof assistants, also machine-checked proofs can be
produced~\cite{MV:ord:acl2, schmitt:ord:key}. As a first step, one
must then represent ordinals inside a theorem prover. This is usually
done via some kind of ordinal notation system (however see Blanchette
\emph{et~al}.~\cite{BPT:card:isa} for well-orders encoded directly in
Isabelle/HOL, and Schmitt~\cite{schmitt:ord:key} for an axiomatic
method, which is implemented in the KeY program verification system).
Typically, ordinals are represented by
trees~\cite{dershowitz:ord:tree,DR:ord:list}; for instance, binary
trees can represent the ordinals below $\epsn$ as follows: the leaf
represents 0, and a tree with subtrees representing ordinals $\alpha$
and $\beta$ represents the sum $\omega^\alpha + \beta$. However, an
ordinal may have multiple such representations. As a result,
traditional approaches to ordinal notation
systems~\cite{buchholz:notation,schuette:book,takeuti:book} usually
have to single out a subset of ordinal terms in order to provide
unique representations. In this paper, we show how modern
type-theoretic features can be used to directly give faithful
representations of ordinals below $\epsn$.

The first feature we use is mutual inductive-inductive
definitions~\cite{nordvallforsberg2013thesis}, which are well
supported in the proof assistant Agda. This allows us to define an
ordinal notation system for ordinals below $\epsn$, simultaneously
with an order relation on it (Section~\ref{sec:mutual}). This means
that we can recover uniqueness of representation, by insisting that
subtrees representing ordinals are given in a decreasing order. This
is similar to the traditional approach which first freely generate
ordinal terms, and then later restrict attention to a subset of
well-behaved terms (Section~\ref{sec:subset}). The advantage of the
mutual approach is that there are no intermediate ``junk'' terms, and
that the more precise types often suggests necessary lemmas to
prove. However this is mostly an ergonomic advantage, since the two
approaches are equivalent (Section~\ref{sec:equiv}).

We also use the feature of higher inductive
types~\cite{lumsdaine:shulman:hit} that has recently been added to
Agda under the \texttt{--cubical} flag~\cite{VMA:cubical:agda}. We
define a different ordinal notation system for ordinals below $\epsn$
as a quotient inductive type~\cite{QIITs}, where we represent ordinals
by finite hereditary multisets (Section~\ref{sec:hit}). Path constructors
are used to identify multiple representations of the same ordinal, so
that we again recover uniqueness. Also this approach is equivalent to
the other two approaches (Section~\ref{sec:equiv}).

Different representations are convenient for different purposes. For
instance, the higher inductive type approach to define the ordinal
notation system is convenient for defining \eg\ the commutative
Hessenberg sum of ordinals (Section~\ref{sec:arith}), while the mutual
representation is convenient for proving transfinite induction
(Section~\ref{sec:ti}). Using the univalence principle~\cite{hottbook}, we can
transport such constructions and properties between the different ordinal notation
systems as needed.

\paragraph{Contributions} We make the following contributions:
\begin{itemize}
\item We give two to our knowledge new ordinal notation systems in
  type theory, representing ordinals below $\epsn$. These can be used
  to verify \eg\ termination of programs inside type-theory-based proof
  assistants such as Agda.
\item We prove that our ordinal notation systems are equivalent, and
  also equivalent to a third, well-known ordinal notation system based
  on a predicate of being in Cantor normal form. This allows us to
  transport constructions and properties between them using the
  univalence principle.
\item We prove that our ordinal notation systems allow the principle
  of transfinite induction. This, and the rest of the development, is
  completely computational and axiom-free, in particular we do not
  need to assume \eg\ excluded middle or countable choice.
\item In general, we show how recent features of Agda such as
  simultaneous definitions and higher inductive types can be used to
  obtain user-friendly constructions, and how to work around common
  pitfalls.
\end{itemize}

\paragraph{Agda Formalization}

Our full Agda development can be found at
\url{https://doi.org/10.5281/zenodo.3588624}.

\section{Cubical Agda}
\label{sec:cubical:agda}

We start by giving a brief introduction to cubical Agda, an
implementation of Cubical Type Theory~\cite{CCHM} in the Agda proof
assistant~\cite{norell:thesis}. We refer to the Agda Wiki\footnote{Agda Wiki:
  \url{https://wiki.portal.chalmers.se/agda/pmwiki.php}} and the Agda
User Manual\footnote{Agda User Manual:
  \url{https://agda.readthedocs.io/}} for more resources on Agda, and
to Vezzosi, M{\"ortberg} and Abel~\cite{VMA:cubical:agda} for the technical
details of the cubical extension of type theory.

Agda has a hierarchy of \emph{universes} called \AF{Set}s. The Cubical
Agda library\footnote{Cubical Agda library:
  \url{https://github.com/agda/cubical}} renames them to \AF{Type}s to
avoid the confusion with the notion of set in Homotopy Type
Theory~\cite{hottbook}. The lowest universe is now called
\AF{Type$_0$}, and it lives in \AF{Type$_1$}. More generally, there is
a universe \AF{Type}~\AB{$\ell$} \AS{:}
\AF{Type}~\AS{(}\AF{lsuc}~\AB{$\ell$}\AS{)} for each \AB{$\ell$}
\AS{:} \AF{Level}, where \AF{lsuc} \AS{:} \AF{Level} \ASto \AF{Level}
is the successor function of universe levels.

We make use of Agda features such as \emph{mixfix operators},
\emph{implicit arguments} and \emph{generalizable variables} to
improve the readability of our Agda code. In turn, they work as
follows: A mixfix operator may contain one or more name parts and one
or more underscores \AgdaUnderscore{}. When applied, its arguments go
in place of the underscore. For instance, when using the \AF{Level}
maximum function
\AF{\AgdaUnderscore{}$\sqcup$\AgdaUnderscore{}}~\AS{:}~\AF{Level}~\ASto~\AF{Level}~\ASto~\AF{Level},
we can write \AB{$\ell$}~\AF{$\sqcup$}~\AB{$\ell'$} which is the same
as
\AF{\AgdaUnderscore{}$\sqcup$\AgdaUnderscore{}}~\AB{$\ell$}~\AB{$\ell'$}. The
\AgdaUnderscore{} symbol also has other usages: when an argument is
not (explicitly) needed in a definition, or a term can be inferred by
Agda's unifier, we can replace it by~\AgdaUnderscore{}. We can even
omit~\AgdaUnderscore{} using implicit arguments, which are declared
using curly braces \AgdaSymbol{\{\}}. For instance, if we define
\IdFun
then \AF{id}~\AC{zero} type-checks, because the type checker knows
\AC{zero}~\AS{:}~\AD{$\mathbb{N}$} and
\AD{$\mathbb{N}$}~\AS{:}~\AF{Type$_0$} and hence can infer that the
implicit argument \AB{$\ell$} is \AF{lzero} (the lowest \AF{Level}),
and that \AB{A} is \AD{$\mathbb{N}$}. To explicitly give an implicit
argument, we just enclose it in curly braces. For example, we can also
write
\AF{id}~\AS{\{}\AgdaUnderscore{}\AS{\}}~\AS{\{}\AD{$\mathbb{N}$}\AS{\}}~\AC{zero}. We
often want our types and functions to be universe polymorphic by
adding \AF{Level} arguments in the declaration as in the above
example. We can further omit
\AS{\{}\AB{$\ell$}~\AS{:}~\AF{Level}\AS{\}} by using generalizable
variables: throughout our Agda development, we declare \LevelVar and
then bindings for them are inserted automatically in declarations
where they are not bound explicitly. For instance, now the identity
function can be declared as \IdFunNoLevel where \AB{$\ell$} is
implicitly universally quantified. We also use generalizable arguments
for the different notions of ordinal terms considered in this paper.

Agda supports simultaneous definition of several mutually dependent data types such as in the schemes of \emph{inductive-recursive}~\cite{dybjer2000IR} and \emph{inductive-inductive}~\cite{nordvallforsberg2013thesis} definitions. Both schemes permit the simultaneous definition of an inductive type \AB{A}, together with a type family \AB{B} over \AB{A}; the difference between them lies in whether \AB{B} is defined recursively over the inductive structure of \AB{A}, or if \AB{B} is itself inductively defined. The type \AB{A} is allowed to refer to \AB{B} and vice versa, so that one may for instance define \AB{A} simultaneously with a predicate or relation \AB{B} on \AB{A}. In this paper, we will use this to define a type of ordinal notations simultaneously with their order relation (Section~\ref{sec:mutual}). The Agda syntax for mutual definitions is to place the type signature of all the mutually defined data types and/or functions before their definitions.

The cubical mode extends Agda with various features from Cubical Type Theory~\cite{CCHM}. To use Agda's cubical mode, we have to place
\OpCubical
at the top of the file. First of all, cubical Agda has a primitive \emph{interval} type \AF{I} with two distinguished endpoints \AC{i0} and \AC{i1}. Paths in a type \AB{A}, representing equality between elements of \AB{A}, are functions \AF{I}~\ASto~\AB{A}; hence they can be introduced using $\lambda$-abstraction and eliminated using function application. There is a special primitive
\PathP
which can be considered as the type of \emph{dependent paths} whose endpoints are in different types. The type of non-dependent paths is defined by
\Eq
where \AS{\{}\AB{A}~\AS{=}~\AB{A}\AS{\}} tells Agda to bind the implicit argument \AB{A} declared in the type of \AF{\AgdaUnderscore{}\ASequiv\AgdaUnderscore{}} to a variable also named \AB{A}, which is used in the definition of \AF{\AgdaUnderscore{}\ASequiv\AgdaUnderscore{}}.
In this paper, we will need the following path-related proofs from the cubical Agda library:
\ListOfPathFunctions
A type is called a \emph{proposition} if all its elements are identical, and is called a \emph{set} if all its path spaces are propositions. In Agda, this is formulated as follows:
\isPS
These univalent concepts play an important role in the development of mathematics in Homotopy Type Theory. Cubical Agda also supports a general schema of \emph{higher inductive types}~\cite{cubicalHITs}, a generalization of inductive types allowing constructors to produce paths. In this paper, we will construct an ordinal notation system as a higher inductive type (Section~\ref{sec:hit}).

Another important concept from Homotopy Type Theory is the notion of
type equivalence. We say that two types \AB{A} and \AB{B} are
\emph{equivalent}, and write \AB{A}~\AF{$\simeq$}~\AB{B}, if there is a
function $f : \AB{A} \to \AB{B}$ with an two-sided inverse
$g : \AB{B} \to \AB{A}$, and if the proofs that $f$ and $g$ are
inverses are coherent in a suitable sense. Importantly, every
isomorphism (\ie\ a function with a two-sided inverse, but without
coherence conditions on the inverse proofs) gives rise to an
equivalence, \ie\ we have
\IsoEquiv
where we have written \AF{Iso}\AgdaSpace{}\AB{A}\AgdaSpace{}\AB{B} for
the type of isomorphisms between \AB{A} and \AB{B}.
The \emph{univalence principle} \AS{(}\AB{A}~\ASequiv~\AB{B}\AS{)} \AF{$\simeq$} \AS{(}\AB{A}~\AF{$\simeq$}~\AB{B}\AS{)} is provable in cubical Agda. In particular, there is a function \AF{ua}~\AS{:} \AB{A}~\AF{$\simeq$}~\AB{B} \ASto \AB{A}~\ASequiv~\AB{B} generating a path between two types from a proof that they are equivalent. We will use univalence to construct paths between equivalent systems of ordinal notations (Section~\ref{sec:equiv}) and then transport various constructions and proofs between them along these paths (Sections~\ref{sec:arith} and~\ref{sec:ti}).

We will also use the following standard Agda data types:
\begin{itemize}
\item The empty type (with no constructors)
\EmptyType
\item Coproduct types (disjoint unions)
  \SumType
\item $\Sigma$-types (dependent pairs)
  \SigmaType
\item Cartesian products (non-dependent pairs)
  \BinProd
\item The natural numbers, and the standard order relation on them
  \NatDef
  \NatLeq
\end{itemize}
When the type of a variable $x$ can be inferred, we will adopt the notational convention \AS{$\forall$} \AB{x} \ASto \AB{P} for \AS{(}\AB{x}~\AF{:}~\AgdaUnderscore\AS{)} \ASto \AB{P}, and similarly \AS{$\forall$} \AS{\{}\AB{x}\AS{\}} \ASto \AB{P} for \AS{\{}\AB{x}~\AF{:}~\AgdaUnderscore\AS{\}} \ASto \AB{P}.

When reasoning using chains of equations, we may write
\EqSyntax
for readability, where \AB{p} \AS{:} \AB{x} \ASequiv \AB{y} and \AB{q}
\AS{:} \AB{y} \ASequiv \AB{z}. This desugars to uses of transitivity
\AB{p} \AgdaOperator{\AF{∙}} \AB{q}, but has the advantage of keeping \AB{x}, \AB{y} and \AB{z} explicit.

\section{Notation Systems for Ordinals Below \texorpdfstring{$\epsn$}{epsilon-zero}}

The classical set-theoretic theory of ordinals defines an ordinal to
be a set $\alpha$ which is transitive
(\ie~$x \in \alpha \to x \subseteq \alpha$) and connected
(\ie~${x \not= y} \to {x \in y} \vee {y \in x}$ for any
$x,y\in\alpha$). For program verification, the perhaps most important
consequence of this definition is that $\in$ is a well ordering on
ordinals --- we hence often write $\alpha < \beta$ for
$\alpha \in \beta$ --- since this implies that properties of ordinals
can be proven by transfinite induction, which in turn implies that
there can be no infinitely descending chains of ordinals
\[
  \alpha_0 > \alpha_1 > \alpha_2 > \ldots
\]
--- in other words, any process that can be assigned a decreasing
sequence of ordinals must terminate.

Obviously the empty set $\emptyset$ is an ordinal (commonly denoted
0), and if $\alpha$ is an ordinal, it is not hard to see that its
\emph{successor} $\alpha + 1 \eqdef \alpha \cup \{\alpha\}$ is also an
ordinal.  This way, we can construct all finite ordinals 1 = 0 + 1, 2
= 1 + 1, 3 = 2 + 1, \ldots, and then take their limit
$\omega = \{0, 1, 2, 3, \ldots\}$.  We can then continue constructing
$\omega + 1$, $\omega + 2$, \ldots, eventually reaching
$\omega + \omega = \omega \cdot 2$, then $\omega \cdot 3$, \ldots and
thus eventually $\omega\cdot \omega = \omega^2$. Iterating this
process, we can construct $\omega^\omega$, and then take the limit of
the sequence
\[
  \omega^\omega, \omega^{\omega^\omega}, \omega^{\omega^{\omega^\omega}}, \ldots
\]
The resulting ordinal is denoted $\epsn$, and is the minimal ordinal
$\alpha$ such that $\omega^\alpha = \alpha$. It is well known that
every ordinal $\alpha$ can be written uniquely in so-called Cantor normal form
\[
\alpha = \omega^{\beta_1} + \omega^{\beta_2} + \cdots + \omega^{\beta_{n}}
\qquad \text{with } \beta_1 \geq \beta_2 \geq \cdots \geq \beta_{n}
\]
for some natural number $n$ and ordinals $\beta_i$ (the special case
$\alpha = 0$ is written as the empty sum with $n = 0$). Our primary
interest in $\epsn$ is that if $\alpha < \epsn$, then every exponent
$\beta_i$ in the Cantor normal form of $\alpha$ satisfies
$\beta_i < \alpha$. Hence if we in turn write
$\beta_i = \omega^{\gamma_1} + \cdots + \omega^{\gamma_{m}}$ in Cantor
normal form, we discover a decreasing sequence
\[
  \alpha > \beta_i > \gamma_j > \ldots
\]
of ordinals, which hence must terminate in finitely many steps.  As a
result, we have a finitary \emph{notation system} which we can hope to
implement inside a theorem prover in order to represent the ideal
concept of ordinals below $\epsn$ in it.  In the rest of this section,
we explore three different approaches for achieving this in Agda.

\subsection{The Subset Approach \texorpdfstring{\AgdaModule{SigmaOrd}}{SigmaOrd}}
\label{sec:subset}

Traditional approaches to ordinal notation systems such as
Buchholz~\cite{buchholz:notation}, Sch\"utte~\cite{schuette:book} and
Takeuti~\cite{takeuti:book} usually start by generating ordinal terms
inductively, and then single out a subset in order to provide a unique
representation for ordinals. Along this direction, we construct a
notation system of ordinals below $\epsn$ as a sigma type in an Agda
module \AgdaModule{SigmaOrd}.

The first step is to define ordinal terms, which are simply
\emph{binary trees}, albeit with highly suggestive constructor names:
\Tree
The idea is that \AgdaInductiveConstructor{𝟎} represents the ordinal
0, and \AgdaInductiveConstructor{ω\textasciicircum{}} \AgdaBound{a}
\AgdaInductiveConstructor{+} \AgdaBound{b} represents
$\omega^\alpha + \beta$ if \AgdaBound{a} and \AgdaBound{b} represent
$\alpha$ and $\beta$ respectively. However $\omega^\alpha + \beta$
might not be in Cantor normal form, and might have multiple such
representations, because no order constraint has been imposed in the
exponents occurring in \AgdaInductiveConstructor{ω\textasciicircum{}}
\AgdaBound{a} \AgdaInductiveConstructor{+} \AgdaBound{b}. To remedy
this flaw, we define an \emph{ordering} on trees as follows (where
\AgdaGeneralizable{a b c d} \AgdaSymbol{:} \AgdaDatatype{Tree}):
\TreeLt
The first constructor \AgdaInductiveConstructor{<₁} states that
\AgdaInductiveConstructor{𝟎} is smaller than any other tree, and the
constructors \AgdaInductiveConstructor{<₂} and
\AgdaInductiveConstructor{<₃} say that
non-\AgdaInductiveConstructor{𝟎} trees are compared lexicographically.
However, this is not a well-founded order on \AgdaDatatype{Tree}! To recover well-foundedness, we must restrict to trees that are in Cantor normal form. Towards this, we define the non-strict order \AgdaFunction{≥} in terms of the strict order \AgdaDatatype{<}:
\TreeGeq
Then we can define the predicate of being in \emph{Cantor normal
  form}: \AgdaInductiveConstructor{𝟎} is in normal form, and
\AgdaInductiveConstructor{ω\textasciicircum{}} \AgdaBound{a}
\AgdaInductiveConstructor{+} \AgdaBound{b} is in normal form if also
\AgdaBound{a} and \AgdaBound{b} are, and in addition \AgdaBound{a} is
greater than or equal to the first exponent in \AgdaBound{b}, formally expressed
using the following function:
\TreeFst
We construct the predicate \AgdaFunction{isCNF} formally using the following indexed inductive definition:
\isCNF
For instance, if \AgdaGeneralizable{a b c d} \AgdaSymbol{:} \AgdaDatatype{Tree} are in Cantor normal form and \AgdaGeneralizable{a}~\AgdaOperator{\AgdaFunction{≥}}~\AgdaGeneralizable{b}~\AgdaOperator{\AgdaFunction{≥}}~\AgdaGeneralizable{c}~\AgdaOperator{\AgdaFunction{≥}}~\AgdaGeneralizable{d}, then \AgdaFunction{isCNF} \AgdaSymbol{(}\AgdaInductiveConstructor{ω\textasciicircum{}} \AgdaBound{a} \AgdaInductiveConstructor{+} \AgdaInductiveConstructor{ω\textasciicircum{}} \AgdaBound{b} \AgdaInductiveConstructor{+} \AgdaInductiveConstructor{ω\textasciicircum{}} \AgdaBound{c} \AgdaInductiveConstructor{+} \AgdaInductiveConstructor{ω\textasciicircum{}} \AgdaBound{d} \AgdaInductiveConstructor{+} \AgdaInductiveConstructor{𝟎}\AgdaSymbol{)} is inhabited.

Finally, we can form the subset of trees in Cantor normal form by the following dependent pair type:
\SigmaOrd
We are justified in using the ``subset'' terminology, because we can prove that \AgdaFunction{isCNF} is proof-irrelevant, \ie
\isCNFIsPropValued
the proof of which in turn relies on the following facts:
\Sfacts
Therefore, equality on \SO is determined only by the \AgdaDatatype{Tree} component, \ie~we can prove
\SigmaOrdEq
For the formal proofs, we refer to our Agda development.  This
approach gives a faithful representation of ordinals below $\epsn$,
but it is sometimes inconvenient to work with, \eg\ one has to
explicitly prove that all operations preserve being in Cantor normal
form. Agda's termination checker is often happier with curried
functions, which further discourages use of \AgdaFunction{SigmaOrd} as
a programming abstraction.

\subsection{The Mutual Approach \texorpdfstring{\AgdaModule{MutualOrd}}{MutualOrd}}
\label{sec:mutual}

Instead of considering an imprecise type of trees, including trees not
in Cantor normal form that do not represent ordinals, we can use
Agda's support for mutual definitions to directly generate trees in
Cantor normal form only, by simultaneously defining ordinal terms and
an ordering on them. The idea is to additionally require the term
representing an ordinal $a$ to be greater than or equal to the first
exponent of the term representing an ordinal $b$ when forming the term
representing $\omega^a+b$. Hence we also need to define the operation
which computes the first exponent of an ordinal term
simultaneously. All in all, in a module \AgdaModule{MutualOrd} we
define
\Mdefs
simultaneously by
\MutualOrd
\MOrdLt
\MOrdFst
where we write \AB{a}~\AgdaOperator{\AF{≥}}~\AB{b} \AgdaSymbol{=} \AB{a}~\AgdaOperator{\AF{>}}~\AB{b}~\AgdaOperator{\AD{⊎}}~\AB{a}~\ASequiv~\AB{b}. Obviously this is very similar to the definitions in
Section~\ref{sec:subset}, but this time, \emph{every} term of type
\MO{} satisfies the order constraint because of the
third argument of the constructor
\AgdaInductiveConstructor{ω\textasciicircum{}\AgdaUnderscore{}+\AgdaUnderscore{}[\AgdaUnderscore{}]}. This
means that every term that we can form is already in Cantor normal
form, and there is no need for a separate predicate.

\begin{remark}
  Because of the coproduct hidden in the constructor argument
  \AB{a}~\AgdaOperator{\AF{≥}}~\AgdaFunction{fst}~\AB{b}, and the
  function \AgdaFunction{fst} occurring in it, \MO{} is a
  nested~\cite{bird1998nested}
  inductive-inductive-recursive~\cite{nordvallforsberg2013thesis}
  definition. However, by replacing the constructor with a coproduct
  argument by two constructors (one for each summand), and by defining
  the \emph{graph} of \AgdaFunction{fst} inductively instead of the
  function itself recursively, it is possible to define an equivalent
  non-nested, non-inductive-recursive type. This justifies the
  soundness of our current definition.
\end{remark}

Just like in the subset approach, we can prove that the order relation
\AgdaOperator{\AgdaDatatype{\AgdaUnderscore{}<\AgdaUnderscore{}}} is
proof-irrelevant, \ie{} there is a proof of \AgdaFunction{isProp}~\AgdaSymbol{(}\AgdaGeneralizable{a}~\AgdaOperator{\AgdaDatatype{<}}~\AgdaGeneralizable{b}\AgdaSymbol{)} for every \AgdaGeneralizable{a} and \AgdaGeneralizable{b}.
However, because of the mutual nature of the definitions,
the following facts has to be proved simultaneously:
\Mfacts
One advantage when working with a tighter type such as
\MO compared to the looser \AgdaDatatype{Tree} is
that the right lemma is often naturally suggested in the course of a
construction: for example, when proving \AgdaFunction{MutualOrdIsSet},
the lemma \AgdaFunction{MutualOrd⁼} falls more or less immediately out
as required by one of the subgoals.

For later use in Section~\ref{sec:arith}, we note that we can prove
(constructively) that the ordering
\AgdaOperator{\AgdaDatatype{\AgdaUnderscore{}<\AgdaUnderscore{}}}
is trichotomous, \ie
\Mtri
The proof is the same as a simpler proof for \AgdaDatatype{Tree} from
Section~\ref{sec:subset}, except that we have to make essential use of
\AgdaFunction{MutualOrd⁼}.

\subsection{The Higher Inductive Approach \texorpdfstring{\AgdaModule{HITOrd}}{HITOrd}}
\label{sec:hit}

In our third approach, an ordinal may have multiple representations,
but we ensure that all of them are identical in the sense of Cubical
Type Theory. We do this by defining a \emph{higher inductive type},
which is given by freely generated terms and paths between them.
Instead of representing an ordinal by a list of ordinal
representations (the exponents in its Cantor normal form), where the
order matters, we instead consider finite multisets of ordinal
representations, where the order of elements does not matter.  Such
finite multisets can be defined in a first-order way as a higher
inductive type, as in Licata~\cite{licata:hott:talk}.  Because the
elements of the multiset again are ordinal representations, what we
need is a higher inductive type of so-called finite hereditary
multisets. We make the following definition in the module
\AgdaModule{HITOrd}:
\HITOrd
This is a higher inductive type, since it is given by listing its
generating term constructors \AgdaInductiveConstructor{𝟎} and \AgdaInductiveConstructor{ω\textasciicircum{}\AgdaUnderscore{}⊕\AgdaUnderscore{}}, as well as its generating path constructors \AgdaInductiveConstructor{swap} and \AgdaInductiveConstructor{trunc}.
Cubical Agda supports higher inductive types natively, and their soundness is guaranteed by the cubical sets model~\cite{cubicalHITs}.
As hinted at by the name of the constructor
\AgdaInductiveConstructor{ω\textasciicircum{}\AgdaUnderscore{}⊕\AgdaUnderscore{}},
our intention for a term
\AgdaInductiveConstructor{ω\textasciicircum{}} \AgdaBound{a}
\AgdaInductiveConstructor{⊕} \AgdaBound{b} is no longer to represent
the non-commutative sum of ordinals $\omega^\alpha + \beta$ where
$\alpha$ and $\beta$ are represented by \AgdaBound{a} and
\AgdaBound{b} respectively, but rather the commutative
\emph{Hessenberg sum} $\omega^{\alpha} \oplus \beta$ (see Section~\ref{sec:hessenberg-arith}). This is
justified by the inclusion of the path constructor
\AgdaInductiveConstructor{swap}, which states that terms with permuted
exponents are identical, as illustrated by the following example (using equational reasoning combinators from the end of Section~\ref{sec:cubical:agda}):
\Hexample
Adding just the \AgdaInductiveConstructor{swap} constructor would
result in a lack of higher-dimensional coherence (\eg~we would expect
\AgdaInductiveConstructor{swap}\AgdaSpace{}\AgdaBound{a}\AgdaSpace{}\AgdaBound{b}\AgdaSpace{}\AgdaBound{c}\AgdaSpace{}
\AgdaFunction{∙}\AgdaSpace{}
\AgdaInductiveConstructor{swap}\AgdaSpace{}\AgdaBound{b}\AgdaSpace{}\AgdaBound{a}\AgdaSpace{}\AgdaBound{c}\AgdaSpace{}
to be the reflexivity path), and so we also include the
\AgdaInductiveConstructor{trunc} constructor which forces
\HO to be a set. This means that we can prove the
following recursion principle for \HO:
\Hrec
This recursion principle states that there is a function from \HO to
any other type \AgdaBound{A} which is closed under the same
``constructors'' as \HO. In other words, to define a function \HO
\ASto \AgdaBound{A} using the recursion principle, \AgdaBound{A} needs
to be a set, and one needs not only a point of \AgdaBound{A} and an
operator \AgdaOperator{\AgdaUnderscore{}⋆\AgdaUnderscore{}}
\AgdaSymbol{:} \AgdaBound{A} \ASto \AgdaBound{A} \ASto \AgdaBound{A},
but also a proof of a ``swap'' rule for
\AgdaOperator{\AgdaUnderscore{}⋆\AgdaUnderscore{}}. This stops us from
defining \eg~a function \AgdaFunction{fst} \AgdaSymbol{:} \HO \ASto
\HO with \AgdaFunction{fst}
\AgdaSymbol{(}\AgdaInductiveConstructor{ω\textasciicircum{}}
\AgdaBound{a} \AgdaInductiveConstructor{⊕} \AgdaBound{b}\AgdaSymbol{)}
\AgdaSymbol{=} \AgdaBound{a} by \AgdaBound{a} \AgdaOperator{⋆}
\AgdaBound{b} \AgdaSymbol{=} \AgdaBound{a}, since this would require
\[
a\; \AgdaSymbol{=}\; a
\AgdaOperator{⋆} \AgdaSymbol{(}b \AgdaOperator{⋆} c \AgdaSymbol{)}\AgdaSpace{}\; \AgdaFunction{≡}\; \AgdaSpace{} \AgdaSymbol{(}b
\AgdaOperator{⋆} a\AgdaSymbol{)} \AgdaOperator{⋆} c \;\AgdaSymbol{=}\; b
\]
for any $a$, $b$ \AgdaSymbol{:} \HO, which is clearly not true. In
general, the recursion principle can be used to define
\emph{non-dependent} functions out of \HO{} that respect the
additional path constructors (we will make use of this in
Section~\ref{sec:equiv}). Similarly, to prove properties of \HO{}, we
will make use of the following induction principle for propositions:
\HindProp
Since the motive $\AgdaBound{P}\;\AgdaBound{x}$ is a proposition for
every $\AgdaBound{x}$ by assumption, we do not need to ask for any
methods involving path constructors --- there are no non-trivial
paths in $\AgdaBound{P}\;\AgdaBound{x}$.
Both the recursion principle and the induction principle for
propositions are instances of the full induction principle, which can
be proven by pattern matching in cubical Agda.

\subsection{Equivalences Between the Three Approaches}
\label{sec:equiv}

We now wish to show that all three approaches are in fact equivalent,
in the strong sense of Homotopy Type Theory. To show \AgdaBound{A}
\AgdaOperator{\AgdaFunction{≃}} \AgdaBound{B}, it suffices to
construct an isomorphism between \AgdaBound{A} and
\AgdaBound{B}. Hence we construct isomorphisms between \SO and
\MO, and between \MO and \HO. In a new
module \AgdaModule{Equivalences}, we import the previous modules:
\importOrds
Since many names are shared between the imported modules (\eg~both
\AgdaModule{SigmaOrd} and \AgdaModule{MutualOrd} define
\AgdaOperator{\AgdaUnderscore{}\AgdaDatatype{<}\AgdaUnderscore{}} and
\AgdaFunction{fst}), we use the short module names \AgdaModule{S},
\AgdaModule{M} and \AgdaModule{H} to qualify ambiguous names, \eg~we
write
\AgdaOperator{\AgdaUnderscore{}\AgdaDatatype{S.<}\AgdaUnderscore{}}
and \AgdaFunction{S.fst} to refer to the concepts from
\AgdaModule{SigmaOrd}, and
\AgdaOperator{\AgdaUnderscore{}\AgdaDatatype{M.<}\AgdaUnderscore{}}
and \AgdaFunction{M.fst} for the ones from \AgdaModule{MutualOrd}.

\subsubsection{\SO is Equivalent to \MO}

We first construct a function \AgdaFunction{T2M} from \SO to \MO.  To
help Agda's termination checker, we define \AgdaFunction{T2M} in
curried form --- in fact the first component \AgdaBound{a}
\AgdaSymbol{:} \AgdaDatatype{Tree} of the sigma type can even be kept
implicit.  Because \MO is defined simultaneously with its ordering,
when defining \AgdaFunction{T2M} we have to simultaneously prove that
it is monotone:
\TtoMsig
We omit the easy proofs of \AF{T2M[<]} and \AF{T2M[≥fst]} here, but
give the definition of \AF{T2M} since it is computationally relevant:
\TtoMimp

\begin{remark}
When implementing \AF{T2M[≥fst]}, we also need the curried equivalent
\AF{T2M[≡]} of \AF{SigmaOrd⁼} specialised to the image of \AF{T2M},
which can be defined using the path induction
principle. Unfortunately, this detour trips up Agda's termination
checker. We work around this by converting a given path
to an inductively defined propositional equality using the following
construction:
\PropEqfromPath
Here \AgdaModule{P} is the builtin module defining propositional
equality \AgdaDatatype{P.≡} as inductively generated by the
constructor \AgdaInductiveConstructor{P.refl}. With this in hand, we
can pattern match directly on the produced propositional equality
instead of using path induction when implementing \AF{T2M[≡]}, which
placates the termination checker:
\TtoMEqsig
Hopefully the termination checker of cubical Agda will be fixed to
accept a direct proof in future versions.
\end{remark}

For the reverse direction, we convert \MO to \AgdaDatatype{Tree}, and then show that the resulting trees are in Cantor normal form:
\MtoT
\isCNFMtoT
We have omitted the easy proofs that \AF{M2T} is monotone:
\MtoSLemmas
Putting all the pieces together, we can now define maps from \MO to
\SO and vice versa:
\StoM
\MtoS
The proofs that the two compositions of \AgdaFunction{S2M} and \AgdaFunction{M2S} are identities rely on the fact that the orderings are proof-irrelevant; more precisely, they use the lemmas \AgdaFunction{SigmaOrd⁼} and \AgdaFunction{MutualOrd⁼}:
\SMids
Since every isomorphism can be extended to an equivalence (using
\AF{isoToEquiv}), and we have just
constructed an isomorphism between \SO and \MO, we have proven:
\begin{theorem}
\SO and \MO are equivalent, \ie\ there is a proof \eqSM.
\end{theorem}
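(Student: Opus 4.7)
The plan is essentially to package together the four pieces we have just constructed. We already have the two conversion functions \AF{S2M} and \AF{M2S}, together with the two round-trip identity proofs \AF{S2M2S=id} and \AF{M2S2M=id}. These four data together constitute an \AF{Iso} between \SO and \MO in the sense of the cubical library. To produce an element of \SO \AgdaOperator{\AF{≃}} \MO, I would therefore simply assemble them into the canonical iso record, using the \AC{iso} constructor (which takes the forward map, the backward map, the right-inverse law, and the left-inverse law in this order), and then apply \AF{isoToEquiv}, which has the signature recalled earlier in Section~\ref{sec:cubical:agda}.

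Concretely, the one-line definition I would write is \AF{S≃M}~\AgdaSymbol{=}~\AF{isoToEquiv}~\AgdaSymbol{(}\AC{iso}~\AF{S2M}~\AF{M2S}~\AF{M2S2M=id}~\AF{S2M2S=id}\AgdaSymbol{)}. There is nothing else to verify at this stage, because \AF{isoToEquiv} supplies the necessary higher coherence automatically from any isomorphism. In particular, no appeal to univalence, to set-truncation of \SO or \MO, or to a separate proof that the forward and backward maps are monotone is required here: the monotonicity obligations have already been discharged internally, in \AF{T2M[<]}, \AF{T2M[≥fst]}, \AF{M2T[<]} and \AF{M2T[≥fst]}, when defining \AF{S2M} and \AF{M2S}; and the coherence of the inverse proofs is hidden inside \AF{isoToEquiv}.

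The hard part of this theorem is thus not the present statement but everything that precedes it: designing \AF{T2M} so that it passes Agda's termination check (which motivated the \AF{PropEqfromPath} detour), and in particular establishing the two round-trip equalities. Both of these in turn rely crucially on the proof-irrelevance lemmas \AF{isCNFIsPropValued} and \AF{≤IsPropValued}, via \AF{SigmaOrd⁼} and \AF{MutualOrd⁼}, which tell us that equality in \SO and \MO is determined purely by the underlying tree / exponent-rest data. Once those are in place the assembly step described above is completely mechanical, and I would expect no further obstacle.
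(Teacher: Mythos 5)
Your proposal matches the paper's proof exactly: the paper defines \AF{S≃M}~\AgdaSymbol{=}~\AF{isoToEquiv}~\AgdaSymbol{(}\AC{iso}~\AF{S2M}~\AF{M2S}~\AF{M2S2M=id}~\AF{S2M2S=id}\AgdaSymbol{)}, with all the substantive work done beforehand in the conversion functions and round-trip lemmas, just as you describe. No gaps.
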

Using \AgdaFunction{ua} \AgdaSymbol{:} \AgdaBound{A} \AgdaOperator{\AgdaFunction{≃}} \AgdaBound{B} \ASto \AgdaBound{A} \ASequiv \AgdaBound{B}, one direction of the univalence principle, we get a path from \SO to \MO.
\begin{corollary}
\SO and \MO are identical, \ie\ there is a proof \pathSM.
\end{corollary}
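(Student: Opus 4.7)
The plan is to invoke the univalence principle directly. The previous theorem already gives us an equivalence \AF{S≃M} \AgdaSymbol{:} \AF{SigmaOrd} \AgdaOperator{\AF{≃}} \MO, constructed via \AF{isoToEquiv} applied to the isomorphism assembled from \AF{S2M}, \AF{M2S}, \AF{M2S2M=id}, and \AF{S2M2S=id}. Since cubical Agda provides the function \AF{ua} \AgdaSymbol{:} \AgdaBound{A} \AgdaOperator{\AF{≃}} \AgdaBound{B} \ASto \AgdaBound{A} \AgdaOperator{\AF{≡}} \AgdaBound{B} (one direction of the univalence principle, imported from \AgdaModule{Cubical.Foundations.Everything}), the desired path is obtained by a single application:
\[
\AF{S≡M} \;\AgdaSymbol{=}\; \AF{ua}\;\AF{S≃M}.
\]

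First I would ensure that \AF{ua} is in scope in the \AgdaModule{Equivalences} module by extending the \texttt{using} clause of the \AgdaKeyword{open} \AgdaKeyword{import} of \AgdaModule{Cubical.Foundations.Everything} to include \AF{ua}. Then the declaration \pathSM followed by the one-line definition above closes the corollary. There is no case analysis, no induction, and no additional lemma required.

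The main (and only) subtlety is conceptual rather than technical: the statement is a genuine \emph{path} between two types living in \AF{Type$_0$}, which is meaningful precisely because cubical Agda rejects the uniqueness-of-identity-proofs principle and instead validates univalence as a theorem. In a traditional intensional type theory, this corollary would require univalence as an axiom, and the resulting path would block computation. In cubical Agda, \AF{ua} is a defined construct that computes, so \AF{S≡M} is a fully computational path. This is what later enables transporting constructions and properties between \SO and \MO (as exploited in Sections \ref{sec:arith} and \ref{sec:ti} for the analogous path \AF{M≡H}), without incurring any axiomatic overhead.
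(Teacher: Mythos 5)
Your proof is correct and is exactly the paper's proof: the path is obtained by applying \AF{ua} (one direction of univalence, which is provable in cubical Agda) to the equivalence \AF{S≃M} from the preceding theorem, giving \AF{S≡M} \AgdaSymbol{=} \AF{ua}~\AF{S≃M}. No further comment is needed.
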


\subsubsection{\MO is Equivalent to \HO}

A translation from \MO to \HO is easy: we simply forget about the
order witnesses.
\MtoH
The other direction is more interesting. We need a binary operation
\AgdaOperator{\AgdaUnderscore{}⋆\AgdaUnderscore{}} \AgdaSymbol{:} \MO
\ASto \MO \ASto \MO satisfying the ``swap'' rule in order to use the
recursion principle of \HO. For this purpose, we notice that both \MO
and \HO admit a list structure: \AgdaInductiveConstructor{𝟎} is the
empty list; and in
\AgdaOperator{\AgdaInductiveConstructor{ω\textasciicircum{}}}~\AgdaBound{a}~\AgdaOperator{\AgdaInductiveConstructor{+}}~\AgdaBound{b}~\AgdaOperator{\AgdaInductiveConstructor{[}}~\AgdaBound{r}~\AgdaOperator{\AgdaInductiveConstructor{]}}
and
\AgdaInductiveConstructor{ω\textasciicircum{}}~\AgdaBound{a}~\AgdaInductiveConstructor{⊕}~\AgdaBound{b}
respectively, \AgdaBound{a} is the head and \AgdaBound{b} is the
tail. All lists in \MO are in descending order, while those in \HO are
quotiented by permutations so that it is impossible to access the
order of elements in \HO lists. Coming back to the binary operation
\AgdaOperator{\AgdaUnderscore{}⋆\AgdaUnderscore} with this
list-structure intuition in mind, we see that
\AgdaOperator{\AgdaUnderscore{}⋆\AgdaUnderscore} needs to add its
first argument (regarding it as an element) into its second (regarding
it as a list) such that different orders of doing this result in the
same list. One operation satisfying these requirements is list insertion.
Again, we simultaneously need to prove that \AF{insert} preserves the ordering, since \MO is simultaneously defined with it.
\insertType
The \AF{insert} function implements the standard algorithm for list
insertion (slightly obfuscated by our choice of constructor
names). Similarly the proof \AF{≥fst-insert} follows the same call
structure to show that \AF{insert} is order-preserving.
\insertDef
\fstInsert
Here \AgdaFunction{M.≥𝟎} is a proof that \AgdaBound{a} \AgdaFunction{≥} \AgdaInductiveConstructor{𝟎} for every \AgdaBound{a}.
Using that \AF{<} is trichotomous, \ie\ using \AF{<-tri} to compare any two elements, we can prove that \AF{insert} satisfies the swap rule:
\insertSwap
Hence we can use \AF{insert} and the recursion principle for \HO to define
\HtoM
and then show that \AgdaFunction{M2H} and \AgdaFunction{H2M} form an
isomorphism (the step case is using equational reasoning combinators,
as explained in Section~\ref{sec:cubical:agda}):
\MtoHtoM
We omit the easy proof of the lemma
\insertPlus
used in the final step.
For the other direction, we use the induction principle for propositions:
\HtoMtoH
This is using the following lemma:
\insertHash
Putting everything together, we have proven:

\begin{theorem}
\MO and \HO are equivalent, \ie\ there is a proof \eqMH.
\end{theorem}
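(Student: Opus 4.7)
The plan is to package the constructions and lemmas already built in the excerpt into a single equivalence. Concretely, since \AF{isoToEquiv} produces an equivalence from any isomorphism, it suffices to exhibit an isomorphism between \MO and \HO. I will use the pair \AF{M2H} (defined earlier by forgetting order witnesses) and \AF{H2M} (defined via the recursion principle of \HO with \AC{𝟎}, \AF{insert} and \AF{insert-swap} as the algebra structure). The two roundtrip proofs \AF{M2H2M=id} and \AF{H2M2H=id} have already been established, so the final theorem is a one-liner.

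First I would write \AF{M≃H} \AS{=} \AF{isoToEquiv} \AS{(}\AC{iso} \AF{M2H} \AF{H2M} \AF{H2M2H=id} \AF{M2H2M=id}\AS{)}, taking care to pass the two roundtrip proofs in the order that \AC{iso} expects (the \HO-side section first, then the \MO-side retraction). The type of \AC{iso} in the cubical library is such that the third argument witnesses \AF{M2H} \AF{∘} \AF{H2M} \AS{=} \AF{id} on \HO and the fourth witnesses \AF{H2M} \AF{∘} \AF{M2H} \AS{=} \AF{id} on \MO, which matches our \AF{H2M2H=id} and \AF{M2H2M=id} respectively.

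The only subtle point, and the place where care is needed, is making sure the isomorphism really does type-check as written. The forward map \AF{M2H} is structurally recursive and straightforward, but \AF{H2M} is defined via the recursion principle, which requires \MO to be a set (provided by \AF{MutualOrdIsSet}) and requires \AF{insert} to satisfy the swap rule (provided by \AF{insert-swap}, whose proof is the lengthiest case analysis in the development, using trichotomy \AF{<-tri} together with \AF{Lm[<→¬≥]} and \AF{≤≥→≡} to eliminate impossible cases and to collapse equal inserts). Everything downstream, including the roundtrip proofs, has been arranged so that \AF{insert-+} and \AF{insert-⊕} make the computations go through.

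As an immediate corollary, one direction of univalence yields a path \pathMH{} via \AF{M≡H} \AS{=} \AF{ua} \AF{M≃H}, which the later sections use to transport structure between \MO and \HO. Since all components have been proved already, I expect no real obstacle here beyond bookkeeping; the genuine work was front-loaded into \AF{insert-swap} and the two roundtrip lemmas.
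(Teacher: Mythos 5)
Your proposal matches the paper's proof exactly: the formalization defines \AF{M≃H} \AS{=} \AF{isoToEquiv} \AS{(}\AC{iso} \AF{M2H} \AF{H2M} \AF{H2M2H=id} \AF{M2H2M=id}\AS{)}, with the argument order you describe, and all the real work indeed resides in \AF{insert-swap}, \AF{insert-+}, \AF{insert-⊕} and the two roundtrip lemmas established beforehand. No gaps.
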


\begin{corollary}
\MO and \HO are identical, \ie\ there is a proof \pathMH.
\end{corollary}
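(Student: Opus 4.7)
The plan is to obtain this corollary as an immediate consequence of the preceding theorem, in exactly the same way that the analogous corollary for \SO and \MO was obtained from \eqSM. The key ingredient is the function \AgdaFunction{ua} \AgdaSymbol{:} \AgdaBound{A} \AgdaOperator{\AgdaFunction{≃}} \AgdaBound{B} \ASto \AgdaBound{A} \ASequiv \AgdaBound{B} supplied by cubical Agda as one direction of the univalence principle, which converts an equivalence of types into a path between them in the universe.

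Concretely, I would define \AgdaFunction{M≡H} by simply applying \AgdaFunction{ua} to the equivalence \AgdaFunction{M≃H} constructed in the preceding theorem, yielding essentially a one-line definition of the form \AgdaFunction{M≡H}~\AgdaSymbol{=}~\AgdaFunction{ua}~\AgdaFunction{M≃H}. No further lemmas about \MO, \HO, or the translations \AgdaFunction{M2H} and \AgdaFunction{H2M} are required at this stage, since all of the combinatorial work (the swap rule for \AgdaFunction{insert}, the two round-trip identities, and the wrapping of the isomorphism into an equivalence via \AgdaFunction{isoToEquiv}) has already been carried out.

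Because this is a direct corollary rather than a fresh construction, I do not anticipate any real obstacle: the only thing to watch is that \AgdaFunction{ua} and the equivalence \AgdaFunction{M≃H} are both in scope with the intended universe levels, which they are, since \MO and \HO both live in \AgdaFunction{Type₀}. Afterwards, this path will be precisely the one used in Sections~\ref{sec:arith} and~\ref{sec:ti} to transport constructions (such as Hessenberg sum, ordinary sum, multiplication, and associated algebraic properties) and the transfinite induction principle back and forth between \MO and \HO via \AgdaFunction{transport} along \AgdaFunction{M≡H}.
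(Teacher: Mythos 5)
Your proposal is correct and matches the paper's proof exactly: the formal development defines \AgdaFunction{M≡H}~\AgdaSymbol{=}~\AgdaFunction{ua}~\AgdaFunction{M≃H}, precisely mirroring the earlier \pathSMproop. Nothing further is needed.
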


\section{Ordinal Arithmetic}
\label{sec:arith}

In this section, we demonstrate the usability of our definitions by
showing how well-known arithmetic operations can be defined on them.
We have two quite different data structures representing ordinals
below $\epsn$: hereditary descending lists \MO and finite hereditary
multisets \HO. It is more convenient and efficient to construct the
ordinary arithmetic operations on \MO, because comparing the ``heads''
suffices for the constructions rather than iterating through the whole
ordinal terms. On the other hand, constructing the commutative
arithmetic operations such as Hessenberg sums and products is easier
and more natural on \HO, because orders do not play a role in the
constructions. Hence we implement ordinary ordinal addition and
multiplication on \MO, and Hessenberg addition and multiplication on
\HO. We prove some properties of the operations, and then transport
the constructions and proofs between them using the path
\AgdaFunction{M≡H} \AgdaSymbol{:} \MO \ASequiv \HO.

\subsection{Ordinary Addition and Multiplication}
\label{sec:ordinary-arith}

Ordinal arithmetic extends addition and multiplication from the
natural numbers to all ordinals, including transfinite ones. It is
famously non-commutative: $1 + \omega = \omega$, but
$\omega + 1 > \omega$. On \MO, we have to define addition
whilst simultaneously proving the property that it preserves the
ordering.
\MPlusType
The interesting case of this well-known algorithm, when both
summands are non-zero, is guided by the fact that ordinals of the form
$\omega^\beta$ are so-called additive principal ordinals, \ie~if
$\gamma<\omega^\beta$ then $\gamma + \omega^\beta = \omega^\beta$
(after defining addition, this is not hard to prove for \MO). In
particular if $\alpha < \beta$, then $\omega^\alpha < \omega^\beta$
and hence $\omega^\alpha + \omega^\beta = \omega^\beta$.
The proof that addition preserves the ordering again follows the same
structure as addition itself.
\MPlusDef
\MPlusProof
The construction of an element of \MO contains also a proof that it is
in Cantor normal form. When implementing
\AgdaOperator{\AgdaFunction{\AgdaUnderscore{}+\AgdaUnderscore{}}}
above, the construction (more precisely, the last case when
\AgdaBound{a} \AF{≥} \AgdaBound{b}) explicitly tells us what property
of \AgdaOperator{\AgdaFunction{\AgdaUnderscore{}+\AgdaUnderscore{}}}
is required to show that the sum is in Cantor normal form, and we are
led to prove this property simultaneously.  In the traditional subset
approach, one usually constructs addition on all ordinal terms, and
then proves that it preserves Cantor normal form. However one has to
figure out what property of addition is needed for the proof oneself.
The above example of a ``construction-guided'' proof demonstrates one
advantage of the mutual approach.

Moving from programs to proofs, consider the following type stating that a given binary operation is
associative:
\AssocType
We can construct an easy but lengthy proof
\MPlusAssoc
that \AgdaOperator{\AgdaFunction{\AgdaUnderscore{}+\AgdaUnderscore{}}}
on \MO is associative --- the lengthiness is due to the use of a case
distinction on \AgdaFunction{<-tri}~\AgdaBound{a}~\AgdaBound{b} in the
definition of
\AgdaOperator{\AgdaFunction{\AgdaUnderscore{}+\AgdaUnderscore{}}}.
Now, using the path \AgdaFunction{M≡H} \AgdaSymbol{:} \MO
\ASequiv \HO, we can transport both the operation of addition and the proof that it is associative to an associative operation on \HO:
\HPlus
\HPlusAssoc
where
\PlusPath
is a dependent path from \AgdaOperator{\AgdaFunction{\AgdaUnderscore{}+\AgdaUnderscore{}}} to \AgdaOperator{\AgdaFunction{\AgdaUnderscore{}+ᴴ\AgdaUnderscore{}}}.

Similarly, we can implement the standard multiplication algorithm for
ordinals in Cantor normal form
\MDot
where
\AgdaOperator{\AgdaFunction{M.ω\textasciicircum{}⟨}}~\AgdaBound{a}~\AgdaOperator{\AgdaFunction{⟩}}~\AgdaSymbol{=}~\AgdaOperator{\AgdaInductiveConstructor{ω\textasciicircum{}}}~\AgdaBound{a}~\AgdaOperator{\AgdaInductiveConstructor{+}}~\AgdaInductiveConstructor{𝟎}~\AgdaOperator{\AgdaInductiveConstructor{[}}~\AgdaFunction{≥𝟎}~\AgdaOperator{\AgdaInductiveConstructor{]}}. Since
every case is implemented in terms of previously defined functions,
there is no need to prove any simultaneous lemma about preservation of
the order this time. Again, we can transport this definition to get multiplication on \HO for free:
\HDot

Let us look at some examples. We define the \MO representation of the ordinal 1 by \AgdaFunction{M.𝟏} = \AgdaOperator{\AgdaFunction{M.ω\textasciicircum{}⟨}}~\AgdaInductiveConstructor{𝟎}~\AgdaOperator{\AgdaFunction{⟩}} and the one of $\omega$ by \AgdaFunction{M.ω} = \AgdaOperator{\AgdaFunction{M.ω\textasciicircum{}⟨}}~\AgdaFunction{M.𝟏}~\AgdaOperator{\AgdaFunction{⟩}}. The following examples illustrate that ordinal addition and multiplication are not commutative: for addition, we have $1+\omega = \omega \not= \omega +1$, where the equality is definitional, \ie, it computes:
\ExNonCommAdd
Similarly, for multiplication, we have $2 \cdot \omega = \omega \not= \omega + \omega = \omega \cdot 2$:
\ExNonCommMul
For the examples of \AgdaDatatype{HITOrd}, we define \AgdaOperator{\AgdaFunction{H.ω\textasciicircum{}⟨}}~\AgdaBound{a}~\AgdaOperator{\AgdaFunction{⟩}}~\AgdaSymbol{=}~\AgdaOperator{\AgdaInductiveConstructor{ω\textasciicircum{}}}~\AgdaBound{a}~\AgdaOperator{\AgdaInductiveConstructor{⊕}}~\AgdaInductiveConstructor{𝟎}, \AgdaFunction{H.𝟏} = \AgdaOperator{\AgdaFunction{H.ω\textasciicircum{}⟨}}~\AgdaInductiveConstructor{𝟎}~\AgdaOperator{\AgdaFunction{⟩}} and \AgdaFunction{H.ω} = \AgdaOperator{\AgdaFunction{H.ω\textasciicircum{}⟨}}~\AgdaFunction{H.𝟏}~\AgdaOperator{\AgdaFunction{⟩}}. The operations of addition and multiplication on \HO are obtained by transporting those on \MO along \AgdaFunction{M≡H}. We get this path using (one direction of) the univalence axiom which is constructively provable in cubical Agda. Therefore, closed terms of \HO constructed using these operations can be evaluated into normal form, for instance
\ExCompHAdd
\ExCompHMul
Again, note that both equalities are definitional.

\subsection{Hessenberg Addition and Multiplication}
\label{sec:hessenberg-arith}

Hessenberg arithmetic~\cite{hessenberg} is a variant of ordinal
arithmetic which is commutative and associative, but not continuous in
its second argument. On \HO, Hessenberg addition is simply implemented
as the concatenation operation on finite multisets. Here we define it
by pattern matching on the first argument, which is equivalent to
using the recursion principle. Note that we also have to produce
clauses for \AgdaInductiveConstructor{swap} and
\AgdaInductiveConstructor{trunc}, corresponding to proving that the
defined function preserves the generating paths. For instance, for \AgdaInductiveConstructor{swap}, we have to prove that our definition gives identical results for swapped exponents, \ie, a path
$\AgdaOperator{\AgdaInductiveConstructor{ω\textasciicircum{}}}~\AgdaBound{a}~\AgdaOperator{\AgdaInductiveConstructor{⊕}}~\AgdaOperator{\AgdaInductiveConstructor{ω\textasciicircum{}}}~\AgdaBound{b}~\AgdaOperator{\AgdaInductiveConstructor{⊕}}~\AgdaSymbol{(}\AgdaBound{c}~\AgdaOperator{\AgdaFunction{⊕}}~\AgdaBound{y}\AgdaSymbol{)}~\AgdaOperator{\AgdaFunction{≡}}~\AgdaOperator{\AgdaInductiveConstructor{ω\textasciicircum{}}}~\AgdaBound{b}~\AgdaOperator{\AgdaInductiveConstructor{⊕}}~\AgdaOperator{\AgdaInductiveConstructor{ω\textasciicircum{}}}~\AgdaBound{a}~\AgdaOperator{\AgdaInductiveConstructor{⊕}}~\AgdaSymbol{(}\AgdaBound{c}~\AgdaOperator{\AgdaFunction{⊕}}~\AgdaBound{y}\AgdaSymbol{)}$, which is again an instance of \AgdaInductiveConstructor{swap}:
\Hsum
Our goal is now to justify the notation \AgdaInductiveConstructor{⊕} in the constructor name for \HO by showing that \AgdaOperator{\AgdaFunction{\AgdaUnderscore{}⊕\AgdaUnderscore{}}} is commutative. First we define the property of being commutative:
\CommType
Next we can use the induction principle for propositions to prove that indeed \AgdaOperator{\AgdaFunction{\AgdaUnderscore{}⊕\AgdaUnderscore{}}} is commutative. The base case is given by a simple lemma
\HsumLemmaunitR
and the heavy work of the step case is done by the lemmas
\HsumLemmas
which are also proved using the induction principle \AgdaFunction{indProp}.
Using these lemmas, the proof is as follows:
\HsumComm
By transporting along the reversed path
\pathHM
we get a commutative operation on \MO:
\MHSum
\MHSumComm
where
\HSumPath
is a dependent path from
\AgdaOperator{\AgdaFunction{\AgdaUnderscore{}⊕\AgdaUnderscore{}}} to
\AgdaOperator{\AgdaFunction{\AgdaUnderscore{}⊕ᴹ\AgdaUnderscore{}}}.

We also implement Hessenberg multiplication on \HO, which is
essentially pairwise concatenation of elements in finite multisets. We
first define \AgdaBound{a} \AgdaFunction{∔} \AgdaBound{b} which
concatenates every element of \AgdaBound{a} with \AgdaBound{b}. Again,
we are asked to prove that this respects swapping exponents and
set-truncation.
\HPointAdd
Then we define Hessenberg multiplication \AgdaBound{a}~\AgdaOperator{\AgdaFunction{⊗}}~\AgdaBound{b} by using this operation to concatenate \AgdaBound{a} to every exponent of \AgdaBound{b}:
\HProd
where
\HSumSwap
is easily proved using \AgdaFunction{⊕assoc} and \AgdaFunction{⊕comm}.
Finally we can again transport to get Hessenberg multiplication
on \MO:
\MHProd

Let us look at some examples. Hessenberg addition on \HO can be viewed as a concatenation operation, as illustrated below:
\ExHsbAdd
Again, because univalence is computational in cubical Agda, the transported Hessenberg operations on \MO compute. For instance, we have the following definitional equalities --- note that these equations are not true for ordinary addition and multiplication.
\ExCompMHAdd
\ExCompMHMul

\section{Transfinite Induction}
\label{sec:ti}

In this section, we prove transfinite induction for \MO, and then
transport it to transfinite induction for \HO. Already defining an
ordering on \HO by hand is non-trivial, and usually requires several
auxiliary concepts such as a subset relation for multisets and
multiset operations such as union and
subtraction~\cite{dershowitz:termination,BFT:nested:mset}.
Now we can simply transport the ordering on \MO to \HO.
Similarly, it seems easier to prove transfinite induction for \MO and
then transport the proof to \HO if needed, rather than proving it
directly.

\subsection{The Transported Ordering on \texorpdfstring{\HO}{HITOrd}}
We firstly transport the ordering on \MO to \HO as follows:
\HITOrdLt
We can further transport the properties of \AgdaOperator{\AgdaFunction{\AgdaUnderscore{}<\AgdaUnderscore{}}} to \AgdaOperator{\AgdaFunction{\AgdaUnderscore{}<ᴴ\AgdaUnderscore{}}}. For instance, let us define the property of decidability
\DEC
We can easily prove
\MOLtDec
and then transport it to get
\HOLtDec
where
\HITOrdLtPath
is a dependent path from \AgdaOperator{\AgdaFunction{\AgdaUnderscore{}<\AgdaUnderscore{}}} to \AgdaOperator{\AgdaFunction{\AgdaUnderscore{}<ᴴ\AgdaUnderscore{}}}.

Now we demonstrate that the transported property \AgdaOperator{\AgdaFunction{\AgdaUnderscore{}<ᴴ\AgdaUnderscore{}}} computes, like the transported constructions in Section~\ref{sec:arith}. To simplify the examples, we turn \AgdaFunction{<ᴴ-dec} into a boolean-valued function by
\HOLtBool
where \AgdaFunction{isLeft} assigns \AgdaInductiveConstructor{true} : \AgdaDatatype{Bool} to the left summand and \AgdaInductiveConstructor{false} : \AgdaDatatype{Bool} to the right. Here are some examples:
\ExLtComp
Again, note that all equalities displayed are definitional.

\subsection{Transfinite Induction}

Transfinite induction for a type $A$ with respect to a relation
\AgdaOperator{\AgdaBound{\AgdaUnderscore{}<\AgdaUnderscore{}}} on $A$
says that if for every $x$ in $A$ a property $P(x)$ is provable
assuming that $P(y)$ holds for all $y < x$, then $P(x)$ holds for
every $x$.
\TItype
It is well-known that transfinite induction is logically equivalent
to every element of $A$ being accessible, in the following sense:
\Acc
The proof of transfinite induction uses \AF{accInd}. We now show that
every element of \MO is accessible:
\wellfoundedness
The base case~\zeroAcc~is trivial. We show the non-zero case
\plusAcc
using the following two lemmas
\WFlemmas
which are simultaneously proved. The idea is that, to prove the accessibility of \oabr, we have to show that \AgdaBound{z} is accessible for any \AgdaBound{z}~\AgdaOperator{\AgdaFunction{<}}~\oabr. There are three cases: (1)~If \AgdaBound{z} is \AgdaInductiveConstructor{𝟎}, then we are done. (2)~If \AgdaBound{z} is \ocds{} with \AgdaBound{c}~\AgdaOperator{\AgdaFunction{<}}~\AgdaBound{a}, then we use \AgdaFunction{fstAcc}. (3)~If \AgdaBound{z} is \ocds{} with \AgdaBound{c}~\AgdaOperator{\AgdaFunction{≡}}~\AgdaBound{a} and \AgdaBound{b}~\AgdaOperator{\AgdaFunction{<}}~\AgdaBound{d}, then we use \AgdaFunction{sndAcc}.

Combining \AF{accInd} and \AF{WF}, we can now prove:
\begin{theorem}
  Transfinite induction holds for \MO, \ie\ there is
  a proof \MTI.
\end{theorem}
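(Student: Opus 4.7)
The plan is straightforward given the machinery already established in the excerpt: the two key pieces \AgdaFunction{accInd} and \AgdaFunction{WF} have been assembled precisely so that transfinite induction falls out by composition. Recall that \AgdaFunction{accInd} takes a predicate \AgdaBound{P}, an inductive step \AgdaBound{step} showing that \AgdaBound{P}~\AgdaBound{x} follows from \AgdaBound{P}~\AgdaBound{y} for all \AgdaBound{y}~\AgdaOperator{\AgdaDatatype{<}}~\AgdaBound{x}, an element \AgdaBound{x}~\AgdaSymbol{:}~\AgdaDatatype{MutualOrd}, and a proof that \AgdaBound{x} is accessible, and returns \AgdaBound{P}~\AgdaBound{x}. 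Meanwhile \AgdaFunction{WF} produces, for any \AgdaBound{x}, a proof \AgdaFunction{WF}~\AgdaBound{x}~\AgdaSymbol{:}~\AgdaDatatype{isAccessible}~\AgdaBound{x}.

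Given the signature of \AgdaFunction{TI}, the proof of \AgdaFunction{MTI} is then to take \AgdaBound{P}, \AgdaBound{step}, and \AgdaBound{x} as arguments and return \AgdaFunction{accInd}~\AgdaBound{P}~\AgdaBound{step}~\AgdaBound{x}~\AgdaSymbol{(}\AgdaFunction{WF}~\AgdaBound{x}\AgdaSymbol{)}. This is a one-liner, since the hard work has already been done in proving well-foundedness. The universe polymorphism in \AgdaFunction{TI} does not cause any trouble because \AgdaFunction{accInd} itself is universe polymorphic in its motive.

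The only genuinely interesting obstacle was already addressed in the proof of \AgdaFunction{WF}, namely establishing that every element of \MO is accessible. That argument proceeds by mutual induction on the structure of the ordinal together with the simultaneous lemmas \AgdaFunction{fstAcc} and \AgdaFunction{sndAcc}, handling the case analysis on how an ordinal below \oabr{} can be formed: it is either \AgdaInductiveConstructor{𝟎}, or of the form \ocds{} with \AgdaBound{c}~\AgdaOperator{\AgdaFunction{<}}~\AgdaBound{a} (use \AgdaFunction{fstAcc}), or with \AgdaBound{c}~\AgdaOperator{\AgdaFunction{≡}}~\AgdaBound{a} and \AgdaBound{b}~\AgdaOperator{\AgdaFunction{<}}~\AgdaBound{d} (use \AgdaFunction{sndAcc}). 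Once \AgdaFunction{WF} is in hand, \AgdaFunction{MTI} is immediate.

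As a remark looking ahead, this pattern is exactly what justifies transporting transfinite induction to \HO in the next step: since \MO~\AgdaOperator{\AgdaFunction{≡}}~\HO{} along \AgdaFunction{M≡H} and the orderings correspond along \AgdaFunction{<Path}, the proof \AgdaFunction{MTI} may be transported rather than re-derived, avoiding the need to reprove accessibility directly on multisets.
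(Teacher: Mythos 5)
Your proposal is correct and is exactly the paper's proof: \AgdaFunction{MTI}~\AgdaBound{P}~\AgdaBound{step}~\AgdaBound{x}~\AgdaSymbol{=}~\AgdaFunction{accInd}~\AgdaBound{P}~\AgdaBound{step}~\AgdaBound{x}~\AgdaSymbol{(}\AgdaFunction{WF}~\AgdaBound{x}\AgdaSymbol{)}. You also correctly identify that all the real work lives in \AgdaFunction{WF} and its simultaneous lemmas, which the paper establishes before this theorem.
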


Transporting along our path \AF{M≡H}, we also have:

\begin{corollary}
  Transfinite induction holds for \HO, \ie\ there is
  a proof \HTI.
\end{corollary}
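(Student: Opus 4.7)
The plan is to directly transport the proof \AF{MTI} along the path \AF{M≡H} between \MO and \HO. Since the statement of transfinite induction \AF{TI}~\AB{A}~\AgdaOperator{\AB{\_<\_}}~\AB{ℓ''} depends on both the underlying type and its order relation, a simple transport along \AF{M≡H} alone would not type-check: we also need to simultaneously transport the relation \AF{<} on \MO to the relation \AF{<ᴴ} on \HO. Fortunately, \AF{<ᴴ} was defined precisely as such a transport of \AF{<}, so there is a dependent path \AF{<Path} witnessing this, exactly as already used earlier in the paper when transporting \AF{<-dec} to \AF{<ᴴ-dec}.

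Concretely, I would define
\begin{center}
\AF{HTI}~\AS{=}~\AF{transport}~\AS{(λ}~\AB{i}~\ASto~\AF{TI}~\AS{(}\AF{M≡H}~\AB{i}\AS{)}~\AS{(}\AF{<Path}~\AB{i}\AS{)}~\AgdaUnderscore{}\AS{)}~\AF{MTI}
\end{center}
mirroring the pattern used for \AF{<ᴴ-dec}. Here the line \AS{λ}~\AB{i}~\ASto~\AF{TI}~\AS{(}\AF{M≡H}~\AB{i}\AS{)}~\AS{(}\AF{<Path}~\AB{i}\AS{)}~\AgdaUnderscore{} is a path in \AF{Type} from \AF{TI}~\MO~\AF{<}~\AB{ℓ} to \AF{TI}~\HO~\AF{<ᴴ}~\AB{ℓ}, so that \AF{transport} turns our proof \AF{MTI} into the desired \AF{HTI}.

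The only subtlety to check is that \AF{<Path} is indeed a \AgdaPostulate{PathP} over \AF{M≡H} of the correct type (a family of binary relations into \AF{Type₀}), but this was already established at the point where \AF{<ᴴ} was introduced, so it can be reused verbatim. The universe level \AB{ℓ''} is handled implicitly by the generalizable variable mechanism, and since the motive in \AF{TI} is of a fixed level independent of \AB{i}, there are no level mismatches along the path. Hence the entire proof is a one-liner once the dependent path \AF{<Path} is in scope, and the main conceptual point is simply that univalence plus the computational behaviour of cubical transport allow us to reuse \AF{MTI} without any repetition of the accessibility argument from the previous subsection.
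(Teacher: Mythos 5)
Your proposal is exactly the paper's proof: \AF{HTI}~\AS{=}~\AF{transport}~\AS{(λ}~\AB{i}~\ASto~\AF{TI}~\AS{(}\AF{M≡H}~\AB{i}\AS{)}~\AS{(}\AF{<Path}~\AB{i}\AS{)}~\AgdaUnderscore{}\AS{)}~\AF{MTI}, reusing the dependent path \AF{<Path} introduced for \AF{<ᴴ-dec}. Correct, and identical in approach to the paper.
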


\subsection{All Strictly Descending Sequences are Finite}

Now we consider a simple application of transfinite induction: to
prove that all strictly descending sequences of ordinals below $\epsn$
are finite. Formulating this faithfully in Agda is not easy when
representing sequences as functions from the natural numbers, and one
often ends up with the negative formulation ``there is no strictly
descending sequence'' instead. One may replace finiteness by eventual
zeroness, but this would contradict the strictly descending condition.
As a stronger and \emph{computational} formulation, we introduce the
following notion:
\psdDes
Note that it is not enough to require only
\AgdaBound{f}\AgdaSpace{}\AgdaBound{i}\AgdaSpace{}\AgdaOperator{\AgdaFunction{≡}}\AgdaSpace{}\AgdaInductiveConstructor{𝟎}
in the second summand, as that would allow \AgdaBound{f} to
``restart'' at stage
\AgdaInductiveConstructor{suc}\AgdaSpace{}\AgdaBound{i}.  This notion
is obviously weaker than the notion of being strictly descending:
\strDes
The following facts of pseudo-descendingness are trivial but play an important role in the proof.
\psdDesFacts
where inequality
\AgdaOperator{\AgdaDatatype{\AgdaUnderscore{}≤ᴺ\AgdaUnderscore{}}} of
natural numbers is inductively defined in the standard way.
Moreover, we say that a sequence $f$ is \emph{eventually zero} if we can find
an $n$ such that $f(i)$ takes the value zero for every $i$ after $n$:
\evZero
One can easily prove the following fact of eventual-zeroness:
\evZeroFact
Now we can formulate our result positively as follows:
\begin{theorem}
  Every pseudo-descending sequence is eventually zero, \ie~there is a
  proof\\[2pt]
  \AF{PD2EZ}\AgdaSpace{}\AgdaSymbol{:}\AgdaSpace{}\PDtoEZ.
  \end{theorem}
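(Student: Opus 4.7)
The plan is to invoke the transfinite induction principle \AgdaFunction{MTI} on \AgdaDatatype{MutualOrd} with the predicate
\PDtoEZPred
so that \AgdaFunction{PD2EZ}~\AgdaBound{f}~\AgdaBound{df} is obtained by \AgdaFunction{MTI}~\AgdaFunction{P}~\AgdaFunction{step}~(\AgdaBound{f}~\AgdaNumber{0})~\AgdaBound{f}~\AgdaBound{df}~\AgdaFunction{refl}. The reason to keep the third hypothesis \AgdaBound{f}~\AgdaNumber{0}~\AgdaOperator{\AgdaFunction{≡}}~\AgdaBound{a} as part of \AgdaFunction{P} is that the induction is driven by the value of the sequence at stage $0$, and we will need to transport strict inequalities from \AgdaBound{f}~\AgdaNumber{1}~\AgdaOperator{\AgdaDatatype{<}}~\AgdaBound{f}~\AgdaNumber{0} to \AgdaBound{f}~\AgdaNumber{1}~\AgdaOperator{\AgdaDatatype{<}}~\AgdaBound{a} in order to apply the induction hypothesis.

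For the step case, given \AgdaBound{x} together with \AgdaBound{h}~\AgdaSymbol{:}~$\forall$~\AgdaBound{y}~\AgdaSymbol{→}~\AgdaBound{y}~\AgdaOperator{\AgdaDatatype{<}}~\AgdaBound{x}~\AgdaSymbol{→}~\AgdaFunction{P}~\AgdaBound{y}, an arbitrary pseudo-descending \AgdaBound{f} with \AgdaBound{f}~\AgdaNumber{0}~\AgdaOperator{\AgdaFunction{≡}}~\AgdaBound{x}, I would do a case analysis on \AgdaFunction{≥𝟎}~\AgdaSymbol{\{}\AgdaBound{f}~\AgdaNumber{0}\AgdaSymbol{\}}, which returns either \AgdaBound{f}~\AgdaNumber{0}~\AgdaOperator{\AgdaFunction{>}}~\AgdaInductiveConstructor{𝟎} or \AgdaBound{f}~\AgdaNumber{0}~\AgdaOperator{\AgdaFunction{≡}}~\AgdaInductiveConstructor{𝟎}. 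In the zero branch, \AgdaFunction{zeroPoint}~\AgdaBound{df}~\AgdaBound{f0=0} directly yields \AgdaBound{f}~\AgdaBound{i}~\AgdaOperator{\AgdaFunction{≡}}~\AgdaInductiveConstructor{𝟎} for every \AgdaBound{i} (using \AgdaInductiveConstructor{z≤n}), so \AgdaFunction{eventually-zero}~\AgdaBound{f} is witnessed by the pair \AgdaNumber{0}~\AgdaOperator{\AgdaInductiveConstructor{,}}~\AgdaSymbol{λ}~\AgdaBound{i}~\AgdaBound{\_}~\AgdaSymbol{→}~\AgdaFunction{zeroPoint}~\AgdaBound{df}~\AgdaBound{f0=0}~\AgdaBound{i}~\AgdaInductiveConstructor{z≤n}.

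In the nonzero branch, \AgdaFunction{nonzeroPoint}~\AgdaBound{df}~\AgdaBound{f0>0} gives \AgdaBound{f}~\AgdaNumber{1}~\AgdaOperator{\AgdaDatatype{<}}~\AgdaBound{f}~\AgdaNumber{0}, and \AgdaFunction{subst}~(\AgdaBound{f}~\AgdaNumber{1}~\AgdaOperator{\AgdaDatatype{<}}~\AgdaFunction{\_})~\AgdaBound{f0=x} along \AgdaBound{f}~\AgdaNumber{0}~\AgdaOperator{\AgdaFunction{≡}}~\AgdaBound{x} produces \AgdaBound{f}~\AgdaNumber{1}~\AgdaOperator{\AgdaDatatype{<}}~\AgdaBound{x}. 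Now apply \AgdaBound{h}~(\AgdaBound{f}~\AgdaNumber{1}) to this inequality, then to the shifted sequence \AgdaBound{f}~\AgdaOperator{\AgdaFunction{∘}}~\AgdaInductiveConstructor{suc}, which is still pseudo-descending because \AgdaBound{df}~\AgdaOperator{\AgdaFunction{∘}}~\AgdaInductiveConstructor{suc} witnesses it, and with the trivial \AgdaFunction{refl}~\AgdaSymbol{:}~\AgdaBound{f}~\AgdaNumber{1}~\AgdaOperator{\AgdaFunction{≡}}~\AgdaBound{f}~\AgdaNumber{1}. This yields \AgdaFunction{eventually-zero}~(\AgdaBound{f}~\AgdaOperator{\AgdaFunction{∘}}~\AgdaInductiveConstructor{suc}), and the already-established lemma \AgdaFunction{eventually-zero-cons}~\AgdaBound{f} lifts this back to \AgdaFunction{eventually-zero}~\AgdaBound{f}.

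The main obstacle is essentially bookkeeping: the predicate \AgdaFunction{P} must be general enough over all sequences (not fixed to the original \AgdaBound{f}) so that the inductive hypothesis \AgdaBound{h} can be applied to the shifted sequence \AgdaBound{f}~\AgdaOperator{\AgdaFunction{∘}}~\AgdaInductiveConstructor{suc}, and the extra parameter \AgdaBound{f}~\AgdaNumber{0}~\AgdaOperator{\AgdaFunction{≡}}~\AgdaBound{a} is precisely what lets us convert \AgdaBound{f}~\AgdaNumber{1}~\AgdaOperator{\AgdaDatatype{<}}~\AgdaBound{f}~\AgdaNumber{0} into the form required by \AgdaBound{h}. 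Everything else follows mechanically from \AgdaFunction{zeroPoint}, \AgdaFunction{nonzeroPoint}, and \AgdaFunction{eventually-zero-cons}.
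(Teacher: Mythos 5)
Your proposal is correct and matches the paper's proof essentially step for step: the same motive \AgdaFunction{P} carrying the hypothesis \AgdaBound{f}~\AgdaNumber{0}~\AgdaOperator{\AgdaFunction{≡}}~\AgdaBound{a}, the same case split on \AgdaFunction{≥𝟎}~\AgdaSymbol{\{}\AgdaBound{f}~\AgdaNumber{0}\AgdaSymbol{\}}, and the same use of \AgdaFunction{zeroPoint}, \AgdaFunction{nonzeroPoint}, \AgdaFunction{eventually-zero-cons}, and the shifted sequence \AgdaBound{f}~\AgdaOperator{\AgdaFunction{∘}}~\AgdaInductiveConstructor{suc} in the induction step. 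Nothing is missing.
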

\begin{proof}
  We prove the statement using transfinite induction on
  \AgdaBound{f}\AgdaSpace{}\AgdaNumber{0}, \ie~we use the
  following motive:
  \PDtoEZPred
  We have to prove the following induction step:
  \PDtoEZStep
  It consists of two cases: (1)~If \AgdaBound{f}~\AgdaNumber{0}~\AgdaOperator{\AgdaFunction{>}}~\AgdaInductiveConstructor{𝟎}, then \AgdaBound{f}~\AgdaNumber{1}~\AgdaOperator{\AgdaFunction{<}}~\AgdaBound{x} by the fact \AgdaFunction{nonzeroPoint}. Hence \AgdaBound{f}~\AgdaFunction{$\circ$}~\AgdaInductiveConstructor{suc} is eventually zero by the hypothesis~\AgdaBound{h}, and so is \AgdaBound{f}\,~by the fact \AgdaFunction{eventually-zero-cons}. (2)~If \AgdaBound{f}~\AgdaNumber{0}~\AgdaOperator{\AgdaFunction{≡}}~\AgdaInductiveConstructor{𝟎}, then \AgdaBound{f}\,~is constantly zero by the fact~\AgdaFunction{zeroPoint}. Hence we can take \PDtoEZProof.
\end{proof}
The algorithm encoded in the above proof checks the values of
\AgdaBound{f}\AgdaSpace{}\AgdaNumber{0},
\AgdaBound{f}\AgdaSpace{}\AgdaNumber{1}, \ldots in turn, until it
finds a zero point. By construction, it will thus find the least
\AgdaBound{n} such that
\AgdaBound{f}~\AgdaBound{i}~\ASequiv~\AgdaInductiveConstructor{𝟎} for
all \AgdaBound{i}~\AgdaOperator{\AgdaFunction{≥ᴺ}}~\AgdaBound{n}. The
transfinite induction principle proves that this procedure is
terminating, using the assumption of pseudo-descendingness.

Because strict descendingness implies the pseudo notion, the negative formulation is a simple corollary.
\begin{corollary}
There is no strictly descending sequence, \ie\ there is a proof \NSDS.
\end{corollary}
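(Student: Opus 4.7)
The plan is to reduce the corollary to the preceding theorem \AgdaFunction{PD2EZ}. The key observation is that every strictly descending sequence is, \emph{a fortiori}, a pseudo-descending one, since \AgdaFunction{strictly-descending}\;\AB{f} gives us \AB{f}\;\AB{i}\;\AgdaOperator{\AF{>}}\;\AB{f}\;\AS{(}\AC{suc}\;\AB{i}\AS{)} for every \AB{i}, which is precisely the left disjunct of the definition of \AgdaFunction{pseudo-descending}. Thus given the hypothesis \AB{sd}\;\AS{:}\;\AgdaFunction{strictly-descending}\;\AB{f}, I would form the pseudo-descending witness \AC{inj₁}\;\AgdaOperator{\AF{∘}}\;\AB{sd} and feed it to \AgdaFunction{PD2EZ} to obtain a proof \AB{ez}\;\AS{:}\;\AgdaFunction{eventually-zero}\;\AB{f}.

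Unpacking \AB{ez} produces a natural number \AB{n} together with a function \AB{prf} witnessing that \AB{f}\;\AB{i}\;\ASequiv\;\AC{𝟎} for every \AB{i}\;\AgdaOperator{\AF{≥ᴺ}}\;\AB{n}. In particular, applying \AB{prf} at \AB{n} (using \AgdaFunction{≤ᴺ-refl}) gives \AB{f}\;\AB{n}\;\ASequiv\;\AC{𝟎}, and applying it at \AC{suc}\;\AB{n} (using \AgdaFunction{n≤1+n}) gives \AB{f}\;\AS{(}\AC{suc}\;\AB{n}\AS{)}\;\ASequiv\;\AC{𝟎}. Composing these two paths we obtain \AB{f}\;\AS{(}\AC{suc}\;\AB{n}\AS{)}\;\ASequiv\;\AB{f}\;\AB{n}.

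On the other hand, \AB{sd}\;\AB{n} supplies \AB{f}\;\AB{n}\;\AgdaOperator{\AF{>}}\;\AB{f}\;\AS{(}\AC{suc}\;\AB{n}\AS{)}, \ie\ a proof of \AB{f}\;\AS{(}\AC{suc}\;\AB{n}\AS{)}\;\AgdaOperator{\AD{<}}\;\AB{f}\;\AB{n}. Feeding both this inequality and the equality from the previous paragraph into \AgdaFunction{<-irreflexive} produces the desired inhabitant of \AD{⊥}. No obstacle is expected here beyond a little bookkeeping with the direction of the equality, since \AgdaFunction{<-irreflexive} has type \AB{a}\;\ASequiv\;\AB{b}\;\ASto\;\AB{a}\;\AgdaOperator{\AD{<}}\;\AB{b}\;\ASto\;\AD{⊥} and we already have \AgdaFunction{⁻¹} available for inverting paths when needed. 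The entire argument is short, with all the genuine mathematical content having been discharged by \AgdaFunction{PD2EZ}.
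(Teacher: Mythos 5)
Your proposal is correct and matches the paper's proof essentially line for line: reduce to \AgdaFunction{PD2EZ} via \AC{inj₁}\;\AgdaOperator{\AF{∘}}\;\AB{sd}, extract the zero point \AB{n}, obtain \AB{f}\;\AS{(}\AC{suc}\;\AB{n}\AS{)}\;\ASequiv\;\AB{f}\;\AB{n} by composing the two instances of the eventual-zeroness witness, and contradict \AB{sd}\;\AB{n} with \AgdaFunction{<-irreflexive}. Nothing to add.
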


\section{Comparison with Related Work}

In this section, we compare existing work with our development.

\paragraph{Trees as Ordinals}

The relationship between ordinals --- especially ordinals below
$\epsn$ --- and various tree structures is of course well known, and
more or less folklore. Dershowitz~\cite{dershowitz:ord:tree} gives an
overview of different ordinal representations using finite trees, and
Dershowitz and Reingold~\cite{DR:ord:list} construct binary trees
using Lisp-like list structures. This is similar to our definition
\MO, but our systems provide \emph{unique} representations of
ordinals. Jervell~\cite{jervell:finiteTrees} gives a clever total
ordering on finite trees with $\epsn$ the supremum of all binary
trees. It is not straightforward to encode and work with this ordering
in a proof assistant.

\paragraph{Ordinals in Type Theory}
Surprisingly large ordinals can be constructed in basic Martin-L\"of
Type Theory with primitive type of (countable) ordinals, but no
recursion principle for it. Coquand, Hancock and
Setzer~\cite{coquand:ord} show that already in this setting, one can
reach $\phi_{\epsn}(0)$, where $\phi_{\alpha}$ is the Veblen
hierarchy.  Hancock~\cite{hancock:thesis} uses a class of predicate
transformers called lenses to give a clean proof of (half of)
Hancock's conjecture: Martin-L\"of Type Theory with $n$ universes can
reach $\phi_{\phi_{\phi_{\ldots}(0)}(0)}(0)$ with $n$ nestings of
$\phi_{\epsn}(0)$. In contrast, in our work we are not restricting
ourselves to a spartan type theory, but try to take full advantage of
all of Agda, with the goal of producing an easy-to-use representation.
It is clear that we can draw much inspiration from this line of work
when going beyond $\epsn$. See also Setzer~\cite{Setzer:prooftheory}
for a general overview of the ordinals that can be constructed in
different variations of type theory.

\paragraph{Formalisations}

Several formalisations of ordinals and ordinal notation systems exist
in the literature. Manolios and Vroon\cite{MV:ord:acl2} represents
ordinals below $\epsn$ in the ACL2 theorem prover, based on a
variation of Cantor normal form with
\[
\omega^{\beta_1}c_1 + \ldots + \omega^{\beta_n}c_n \quad \text{with $\beta_1 > \ldots > \beta_n$ and all $c_i$ finite}
\]
This is similar to our \SO representation, except that there are no
mechanical guarantees that given inputs actually are in Cantor normal
form. They also provide algorithms for ordinal arithmetic and
comparisons of ordinals, but their correctness proofs have to assume
that the given inputs are in Cantor normal form. In contrast, it is
not possible to construct ordinal terms not in Cantor normal form in
our systems. Similarly, Cast\'eran and Contejean~\cite{CC:ord:coq} and
Grimm~\cite{grimm:ord:coq} develop significant theories of ordinals
below $\epsn$ in Coq, including arithmetic operations and transfinite
induction. This is again similar to our \SO approach (a choice perhaps
made because Coq to date does not support simultaneous definitions or
higher inductive types, which are needed for the \MO and \HO
approaches respectively).

\paragraph{Finite Multisets}

In Isabelle/HOL, Blanchette, Fleury and Traytel~\cite{BFT:nested:mset}
define an inductive datatype of hereditary multisets to represent
ordinals below $\epsn$, similar to our $\HO$ \mbox{approach}. The
representation relies on the notion of multisets in Isabelle's
standard library, which are defined as natural number-valued functions
with a finite support. This can be constructively problematic, for
instance when defining ordinal exponentiation. In contrast, our use of
higher inductive types to define multisets means that our datatypes
are reassuringly first-order. Because hereditary multisets are viewed
as a subtype of nested multisets, the nested multiset ordering and its
well-foundedness proof are ``lifted'' to the hereditary multisets
using the sophisticated machinery in Isabelle. However, defining the
nested multiset ordering~\cite{dershowitz:termination} is non-trivial
and proving its well-foundedness is challenging as admitted
in~\cite{BFT:nested:mset}. In comparison, our ordinal notation system
\MO is convenient to work with for instance to prove its
well-foundedness. By showing that it is equivalent to hereditary
multisets \mbox{\HO}, we obtain also a well-foundedness proof for the
latter.

\section{Concluding Remarks}

We have used modern features of cubical Agda such as simultaneous
definitions and higher inductive types to faithfully represent
ordinals below $\epsn$, and shown that our definitions are easy to
work with by defining common operations on, and proofs about, our
ordinal notation systems. Our development is fully constructive.

Of course, in the world of ordinals, $\epsn$ is tiny; already
Martin-L\"of Type Theory with W-types and only one universe has
proof-theoretic strength well beyond
$\epsn$~\cite{Setzer:prooftheory}, and simultaneous
inductive-recursive definitions are known to increase the
proof-theoretic strength even further (a consequence of Hancock's
Conjecture~\cite{hancock:thesis}). Similarly Lumsdaine and
Shulman~\cite{lumsdaine:shulman:hit} show that adding recursive higher
inductive types increases the power of type theory by considering in
particular a higher inductive type encoding of a variation of Brouwer
tree ordinals. To verify \eg~termination of programs exhausting the
strength of such systems, one would have to define even stronger
ordinal notation systems. We conjecture that powerful definitional
principles such as simultaneous inductive-recursive definitions and
higher inductive types --- perhaps combined --- can be used to
faithfully represent also larger ordinals, and hence be useful for
such program verification problems.

\begin{acks}
We thank Nicolai Kraus, Helmut Schwichtenberg, Ryota Akiyoshi, Nils K\"opp, Masahiko Sato and Anders M\"ortberg for many interesting and illuminating discussions, and the anonymous reviewers for their helpful suggestions and comments. This work was supported by funding from the \grantsponsor{EPSRC}{Engineering and Physical Sciences Research Council}{https://epsrc.ukri.org/} [grant number \grantnum{EPSRC}{EP/M016951/1}], the \grantsponsor{Humboldt}{Alexander von Humboldt Foundation}{https://www.humboldt-foundation.de/}, and the \grantsponsor{LMU}{LMUexcellent initiative}{http://www.en.uni-muenchen.de/about_lmu/research/excellence_initiative/}.
\end{acks}

\balance
\bibliography{ordbib}


\begin{thebibliography}{30}


\ifx \showCODEN    \undefined \def \showCODEN     #1{\unskip}     \fi
\ifx \showDOI      \undefined \def \showDOI       #1{#1}\fi
\ifx \showISBNx    \undefined \def \showISBNx     #1{\unskip}     \fi
\ifx \showISBNxiii \undefined \def \showISBNxiii  #1{\unskip}     \fi
\ifx \showISSN     \undefined \def \showISSN      #1{\unskip}     \fi
\ifx \showLCCN     \undefined \def \showLCCN      #1{\unskip}     \fi
\ifx \shownote     \undefined \def \shownote      #1{#1}          \fi
\ifx \showarticletitle \undefined \def \showarticletitle #1{#1}   \fi
\ifx \showURL      \undefined \def \showURL       {\relax}        \fi
\providecommand\bibfield[2]{#2}
\providecommand\bibinfo[2]{#2}
\providecommand\natexlab[1]{#1}
\providecommand\showeprint[2][]{arXiv:#2}

\bibitem[\protect\citeauthoryear{Altenkirch, Capriotti, Dijkstra, Kraus, and
  Nordvall~Forsberg}{Altenkirch et~al\mbox{.}}{2018}]%
        {QIITs}
\bibfield{author}{\bibinfo{person}{Thorsten Altenkirch}, \bibinfo{person}{Paolo
  Capriotti}, \bibinfo{person}{Gabe Dijkstra}, \bibinfo{person}{Nicolai Kraus},
  {and} \bibinfo{person}{Fredrik Nordvall~Forsberg}.}
  \bibinfo{year}{2018}\natexlab{}.
\newblock \showarticletitle{Quotient inductive-inductive types}. In
  \bibinfo{booktitle}{\emph{Foundations of Software Science and Computation
  Structures}} \emph{(\bibinfo{series}{Lecture Notes in Computer Science})},
  \bibfield{editor}{\bibinfo{person}{Christel Baier} {and} \bibinfo{person}{Ugo
  Dal~Lago}} (Eds.), Vol.~\bibinfo{volume}{10803}.
  \bibinfo{publisher}{Springer}, \bibinfo{address}{Heidelberg, Germany},
  \bibinfo{pages}{293--310}.
\newblock


\bibitem[\protect\citeauthoryear{Bird and Meertens}{Bird and Meertens}{1998}]%
        {bird1998nested}
\bibfield{author}{\bibinfo{person}{Richard Bird} {and} \bibinfo{person}{Lambert
  Meertens}.} \bibinfo{year}{1998}\natexlab{}.
\newblock \showarticletitle{Nested datatypes}. In
  \bibinfo{booktitle}{\emph{Mathematics of Program Construction}}
  \emph{(\bibinfo{series}{Lecture Notes in Computer Science})},
  \bibfield{editor}{\bibinfo{person}{Johan Jeuring}} (Ed.),
  Vol.~\bibinfo{volume}{1422}. \bibinfo{publisher}{Springer},
  \bibinfo{address}{Heidelberg, Germany}, \bibinfo{pages}{52--67}.
\newblock


\bibitem[\protect\citeauthoryear{Blanchette, Fleury, and Traytel}{Blanchette
  et~al\mbox{.}}{2017}]%
        {BFT:nested:mset}
\bibfield{author}{\bibinfo{person}{Jasmin~Christian Blanchette},
  \bibinfo{person}{Mathias Fleury}, {and} \bibinfo{person}{Dmitriy Traytel}.}
  \bibinfo{year}{2017}\natexlab{}.
\newblock \showarticletitle{Nested multisets, hereditary multisets, and
  syntactic ordinals in {I}sabelle/{HOL}}. In \bibinfo{booktitle}{\emph{Formal
  Structures for Computation and Deduction}} \emph{(\bibinfo{series}{Leibniz
  International Proceedings in Informatics (LIPIcs)})},
  \bibfield{editor}{\bibinfo{person}{Dale Miller}} (Ed.),
  Vol.~\bibinfo{volume}{84}. \bibinfo{publisher}{Schloss
  Dagstuhl--Leibniz-Zentrum f{\"u}r Informatik}, \bibinfo{address}{Dagstuhl,
  Germany}, \bibinfo{pages}{11:1--11:18}.
\newblock


\bibitem[\protect\citeauthoryear{Blanchette, Popescu, and Traytel}{Blanchette
  et~al\mbox{.}}{2014}]%
        {BPT:card:isa}
\bibfield{author}{\bibinfo{person}{Jasmin~Christian Blanchette},
  \bibinfo{person}{Andrei Popescu}, {and} \bibinfo{person}{Dmitriy Traytel}.}
  \bibinfo{year}{2014}\natexlab{}.
\newblock \showarticletitle{Cardinals in {I}sabelle/{HOL}}. In
  \bibinfo{booktitle}{\emph{Interactive Theorem Proving}}
  \emph{(\bibinfo{series}{Lecture Notes in Computer Science})},
  \bibfield{editor}{\bibinfo{person}{Gerwin Klein} {and} \bibinfo{person}{Ruben
  Gamboa}} (Eds.), Vol.~\bibinfo{volume}{8558}. \bibinfo{publisher}{Springer},
  \bibinfo{address}{Heidelberg, Germany}, \bibinfo{pages}{111--127}.
\newblock


\bibitem[\protect\citeauthoryear{Buchholz}{Buchholz}{1991}]%
        {buchholz:notation}
\bibfield{author}{\bibinfo{person}{Wilfried Buchholz}.}
  \bibinfo{year}{1991}\natexlab{}.
\newblock \showarticletitle{Notation systems for infinitary derivations}.
\newblock \bibinfo{journal}{\emph{Archive for {M}athematical {L}ogic}}
  \bibinfo{volume}{30} (\bibinfo{year}{1991}), \bibinfo{pages}{227--296}.
\newblock


\bibitem[\protect\citeauthoryear{Cast\'{e}ran and Contejean}{Cast\'{e}ran and
  Contejean}{2006}]%
        {CC:ord:coq}
\bibfield{author}{\bibinfo{person}{Pierre Cast\'{e}ran} {and}
  \bibinfo{person}{Evelyne Contejean}.} \bibinfo{year}{2006}\natexlab{}.
\newblock \bibinfo{title}{On ordinal notations}.  (\bibinfo{year}{2006}).
\newblock
\newblock
\shownote{Available at \url{http://coq.inria.fr/V8.2pl1/contribs/Cantor.html}.}


\bibitem[\protect\citeauthoryear{Cohen, Coquand, Huber, and M\"ortberg}{Cohen
  et~al\mbox{.}}{2018}]%
        {CCHM}
\bibfield{author}{\bibinfo{person}{Cyril Cohen}, \bibinfo{person}{Thierry
  Coquand}, \bibinfo{person}{Simon Huber}, {and} \bibinfo{person}{Anders
  M\"ortberg}.} \bibinfo{year}{2018}\natexlab{}.
\newblock \showarticletitle{Cubical {T}ype {T}heory: a constructive
  interpretation of the {U}nivalence {A}xiom}. In
  \bibinfo{booktitle}{\emph{21st International Conference on Types for Proofs
  and Programs 2015}} \emph{(\bibinfo{series}{Leibniz International Proceedings
  in Informatics (LIPIcs)})}, \bibfield{editor}{\bibinfo{person}{Tarmo
  Uustalu}} (Ed.), Vol.~\bibinfo{volume}{69}. \bibinfo{publisher}{Schloss
  Dagstuhl--Leibniz-Zentrum f{\"u}r Informatik}, \bibinfo{address}{Dagstuhl,
  Germany}, \bibinfo{pages}{5:1--5:34}.
\newblock


\bibitem[\protect\citeauthoryear{Coquand, Hancock, and Setzer}{Coquand
  et~al\mbox{.}}{1997}]%
        {coquand:ord}
\bibfield{author}{\bibinfo{person}{Thierry Coquand}, \bibinfo{person}{Peter
  Hancock}, {and} \bibinfo{person}{Anton Setzer}.}
  \bibinfo{year}{1997}\natexlab{}.
\newblock \bibinfo{title}{Ordinals in type theory}.
\newblock \bibinfo{howpublished}{Invited talk for CSL '97}.
\newblock


\bibitem[\protect\citeauthoryear{Coquand, Huber, and M\"{o}rtberg}{Coquand
  et~al\mbox{.}}{2018}]%
        {cubicalHITs}
\bibfield{author}{\bibinfo{person}{Thierry Coquand}, \bibinfo{person}{Simon
  Huber}, {and} \bibinfo{person}{Anders M\"{o}rtberg}.}
  \bibinfo{year}{2018}\natexlab{}.
\newblock \showarticletitle{On Higher Inductive Types in Cubical Type Theory}.
  In \bibinfo{booktitle}{\emph{Logic in Computer Science}}.
  \bibinfo{publisher}{ACM}, \bibinfo{address}{New York, USA},
  \bibinfo{pages}{255--264}.
\newblock
\showISBNx{978-1-4503-5583-4}


\bibitem[\protect\citeauthoryear{Dershowitz}{Dershowitz}{1993}]%
        {dershowitz:ord:tree}
\bibfield{author}{\bibinfo{person}{Nachum Dershowitz}.}
  \bibinfo{year}{1993}\natexlab{}.
\newblock \showarticletitle{Trees, ordinals and termination}. In
  \bibinfo{booktitle}{\emph{Theory and Practice of Software Development}}
  \emph{(\bibinfo{series}{Lecture Notes in Computer Science})},
  \bibfield{editor}{\bibinfo{person}{Marie-Claude Gaudel} {and}
  \bibinfo{person}{Jean-Pierre Jouannaud}} (Eds.), Vol.~\bibinfo{volume}{668}.
  \bibinfo{publisher}{Springer}, \bibinfo{address}{Heidelberg, Germany},
  \bibinfo{pages}{243--250}.
\newblock


\bibitem[\protect\citeauthoryear{Dershowitz and Manna}{Dershowitz and
  Manna}{1979}]%
        {dershowitz:termination}
\bibfield{author}{\bibinfo{person}{Nachum Dershowitz} {and}
  \bibinfo{person}{Zohar Manna}.} \bibinfo{year}{1979}\natexlab{}.
\newblock \showarticletitle{Proving termination with multiset orderings}.
\newblock \bibinfo{journal}{\emph{Commun. ACM}} \bibinfo{volume}{22},
  \bibinfo{number}{8} (\bibinfo{year}{1979}), \bibinfo{pages}{465--476}.
\newblock


\bibitem[\protect\citeauthoryear{Dershowitz and Reingold}{Dershowitz and
  Reingold}{1992}]%
        {DR:ord:list}
\bibfield{author}{\bibinfo{person}{Nachum Dershowitz} {and}
  \bibinfo{person}{Edward~M. Reingold}.} \bibinfo{year}{1992}\natexlab{}.
\newblock \showarticletitle{Ordinal arithmetic with list structures}. In
  \bibinfo{booktitle}{\emph{Logical Foundations of Computer Science}}
  \emph{(\bibinfo{series}{Lecture Notes in Computer Science})},
  \bibfield{editor}{\bibinfo{person}{Anil Nerode} {and}
  \bibinfo{person}{Michael Taitslin}} (Eds.), Vol.~\bibinfo{volume}{620}.
  \bibinfo{publisher}{Springer}, \bibinfo{address}{Heidelberg, Germany},
  \bibinfo{pages}{117--138}.
\newblock


\bibitem[\protect\citeauthoryear{Dybjer}{Dybjer}{2000}]%
        {dybjer2000IR}
\bibfield{author}{\bibinfo{person}{Peter Dybjer}.}
  \bibinfo{year}{2000}\natexlab{}.
\newblock \showarticletitle{A general formulation of simultaneous
  inductive-recursive definitions in type theory}.
\newblock \bibinfo{journal}{\emph{Journal of Symbolic Logic}}
  \bibinfo{volume}{65}, \bibinfo{number}{2} (\bibinfo{year}{2000}),
  \bibinfo{pages}{525--549}.
\newblock


\bibitem[\protect\citeauthoryear{Floyd}{Floyd}{1967}]%
        {Floyd:1967}
\bibfield{author}{\bibinfo{person}{Robert~W. Floyd}.}
  \bibinfo{year}{1967}\natexlab{}.
\newblock \showarticletitle{Assigning Meanings to Programs}. In
  \bibinfo{booktitle}{\emph{Symposium on Applied Mathematics}},
  \bibfield{editor}{\bibinfo{person}{J.T. Schwartz}} (Ed.),
  Vol.~\bibinfo{volume}{19}. \bibinfo{publisher}{American Mathematical
  Society}, \bibinfo{address}{Providence, USA}, \bibinfo{pages}{19--32}.
\newblock


\bibitem[\protect\citeauthoryear{Grimm}{Grimm}{2013}]%
        {grimm:ord:coq}
\bibfield{author}{\bibinfo{person}{Jos\'{e} Grimm}.}
  \bibinfo{year}{2013}\natexlab{}.
\newblock \bibinfo{booktitle}{\emph{Implementation of three types of ordinals
  in {C}oq}}.
\newblock \bibinfo{type}{{T}echnical {R}eport} RR-8407.
  \bibinfo{institution}{INRIA}.
\newblock
\newblock
\shownote{Available at \url{https://hal.inria.fr/hal-00911710}.}


\bibitem[\protect\citeauthoryear{Hancock}{Hancock}{2000}]%
        {hancock:thesis}
\bibfield{author}{\bibinfo{person}{Peter Hancock}.}
  \bibinfo{year}{2000}\natexlab{}.
\newblock \emph{\bibinfo{title}{Ordinals and Interactive Programs}}.
\newblock \bibinfo{thesistype}{Ph.D. Dissertation}. \bibinfo{school}{University
  of Edinburgh}.
\newblock


\bibitem[\protect\citeauthoryear{Hessenberg}{Hessenberg}{1906}]%
        {hessenberg}
\bibfield{author}{\bibinfo{person}{Gerhard Hessenberg}.}
  \bibinfo{year}{1906}\natexlab{}.
\newblock \bibinfo{booktitle}{\emph{Grundbegriffe der Mengenlehre}}.
  Vol.~\bibinfo{volume}{1}.
\newblock \bibinfo{publisher}{Vandenhoeck \& Ruprecht},
  \bibinfo{address}{G{\"o}ttingen, Germany}.
\newblock


\bibitem[\protect\citeauthoryear{Jervell}{Jervell}{2005}]%
        {jervell:finiteTrees}
\bibfield{author}{\bibinfo{person}{Herman~Ruge Jervell}.}
  \bibinfo{year}{2005}\natexlab{}.
\newblock \showarticletitle{Finite Trees as Ordinals}. In
  \bibinfo{booktitle}{\emph{New Computational Paradigms}},
  \bibfield{editor}{\bibinfo{person}{S.~Barry Cooper},
  \bibinfo{person}{Benedikt L{\"o}we}, {and} \bibinfo{person}{Leen Torenvliet}}
  (Eds.). \bibinfo{publisher}{Springer}, \bibinfo{address}{Heidelberg,
  Germany}, \bibinfo{pages}{211--220}.
\newblock


\bibitem[\protect\citeauthoryear{Licata}{Licata}{2014}]%
        {licata:hott:talk}
\bibfield{author}{\bibinfo{person}{Dan Licata}.}
  \bibinfo{year}{2014}\natexlab{}.
\newblock \bibinfo{title}{What is Homotopy Type Theory?}
\newblock
\newblock
\newblock
\shownote{Invited talk at Coq Workshop 2014. Slides available at
  \url{http://dlicata.web.wesleyan.edu/pubs/l14coq/l14coq.pdf}.}


\bibitem[\protect\citeauthoryear{Lumsdaine and Shulman}{Lumsdaine and
  Shulman}{2019}]%
        {lumsdaine:shulman:hit}
\bibfield{author}{\bibinfo{person}{Peter~Lefanu Lumsdaine} {and}
  \bibinfo{person}{Michael Shulman}.} \bibinfo{year}{2019}\natexlab{}.
\newblock \showarticletitle{Semantics of higher inductive types}.
\newblock \bibinfo{journal}{\emph{Mathematical Proceedings of the Cambridge
  Philosophical Society}} (\bibinfo{year}{2019}), \bibinfo{pages}{1--50}.
\newblock


\bibitem[\protect\citeauthoryear{Manolios and Vroon}{Manolios and
  Vroon}{2005}]%
        {MV:ord:acl2}
\bibfield{author}{\bibinfo{person}{Panagiotis Manolios} {and}
  \bibinfo{person}{Daron Vroon}.} \bibinfo{year}{2005}\natexlab{}.
\newblock \showarticletitle{Ordinal arithmetic: algorithms and mechanization}.
\newblock \bibinfo{journal}{\emph{Journal of {A}utomated {R}easoning}}
  \bibinfo{volume}{34}, \bibinfo{number}{4} (\bibinfo{year}{2005}),
  \bibinfo{pages}{387--423}.
\newblock


\bibitem[\protect\citeauthoryear{Nordvall~Forsberg}{Nordvall~Forsberg}{2013}]%
        {nordvallforsberg2013thesis}
\bibfield{author}{\bibinfo{person}{Fredrik Nordvall~Forsberg}.}
  \bibinfo{year}{2013}\natexlab{}.
\newblock \emph{\bibinfo{title}{Inductive-inductive definitions}}.
\newblock \bibinfo{thesistype}{Ph.D. Dissertation}. \bibinfo{school}{Swansea
  University}.
\newblock


\bibitem[\protect\citeauthoryear{Norell}{Norell}{2007}]%
        {norell:thesis}
\bibfield{author}{\bibinfo{person}{Ulf Norell}.}
  \bibinfo{year}{2007}\natexlab{}.
\newblock \emph{\bibinfo{title}{Towards a practical programming language based
  on dependent type theory}}.
\newblock \bibinfo{thesistype}{Ph.D. Dissertation}. \bibinfo{school}{Chalmers
  University of Technology}.
\newblock


\bibitem[\protect\citeauthoryear{Schmitt}{Schmitt}{2017}]%
        {schmitt:ord:key}
\bibfield{author}{\bibinfo{person}{Peter~H. Schmitt}.}
  \bibinfo{year}{2017}\natexlab{}.
\newblock \showarticletitle{A mechanizable first-order theory of ordinals}. In
  \bibinfo{booktitle}{\emph{Automated Reasoning with Analytic Tableaux and
  Related Methods}} \emph{(\bibinfo{series}{Lecture Notes in Computer
  Science})}, \bibfield{editor}{\bibinfo{person}{Renate Schmidt} {and}
  \bibinfo{person}{Cl{\'a}udia Nalon}} (Eds.), Vol.~\bibinfo{volume}{10501}.
  \bibinfo{publisher}{Springer}, \bibinfo{address}{Heidelberg, Germany},
  \bibinfo{pages}{331--346}.
\newblock


\bibitem[\protect\citeauthoryear{Sch\"{u}tte}{Sch\"{u}tte}{1977}]%
        {schuette:book}
\bibfield{author}{\bibinfo{person}{Kurt Sch\"{u}tte}.}
  \bibinfo{year}{1977}\natexlab{}.
\newblock \bibinfo{booktitle}{\emph{{P}roof {T}heory}}.
\newblock \bibinfo{publisher}{Springer}, \bibinfo{address}{Heidelberg,
  Germany}.
\newblock


\bibitem[\protect\citeauthoryear{Setzer}{Setzer}{2004}]%
        {Setzer:prooftheory}
\bibfield{author}{\bibinfo{person}{Anton Setzer}.}
  \bibinfo{year}{2004}\natexlab{}.
\newblock \showarticletitle{Proof theory of {M}artin-{L}{\"o}f {T}ype {T}heory
  -- An overview}.
\newblock \bibinfo{journal}{\emph{Mathematiques et Sciences Humaines}}
  \bibinfo{volume}{42 ann{\'e}e, n$^o$165} (\bibinfo{year}{2004}),
  \bibinfo{pages}{59--99}.
\newblock


\bibitem[\protect\citeauthoryear{Takeuti}{Takeuti}{1987}]%
        {takeuti:book}
\bibfield{author}{\bibinfo{person}{Gaisi Takeuti}.}
  \bibinfo{year}{1987}\natexlab{}.
\newblock \bibinfo{booktitle}{\emph{{P}roof {T}heory} (\bibinfo{edition}{2}
  ed.)}.
\newblock \bibinfo{publisher}{North-Holland Publishing Company},
  \bibinfo{address}{Amsterdam}.
\newblock


\bibitem[\protect\citeauthoryear{{The Univalent Foundations Program}}{{The
  Univalent Foundations Program}}{2013}]%
        {hottbook}
\bibfield{author}{\bibinfo{person}{{The Univalent Foundations Program}}.}
  \bibinfo{year}{2013}\natexlab{}.
\newblock \bibinfo{booktitle}{\emph{Homotopy {T}ype {T}heory: {U}nivalent
  {F}oundations of {M}athematics}}.
\newblock \bibinfo{publisher}{\url{https://homotopytypetheory.org/book}},
  \bibinfo{address}{Institute for Advanced Study}.
\newblock


\bibitem[\protect\citeauthoryear{Turing}{Turing}{1949}]%
        {turing:ordinals}
\bibfield{author}{\bibinfo{person}{Alan Turing}.}
  \bibinfo{year}{1949}\natexlab{}.
\newblock \showarticletitle{Checking a Large Routine}. In
  \bibinfo{booktitle}{\emph{Report of a Conference on High Speed Automatic
  Calculating Machines}}. \bibinfo{publisher}{University Mathematical
  Laboratory}, \bibinfo{address}{Cambridge, UK}, \bibinfo{pages}{67--69}.
\newblock


\bibitem[\protect\citeauthoryear{Vezzosi, M\"{o}rtberg, and Abel}{Vezzosi
  et~al\mbox{.}}{2019}]%
        {VMA:cubical:agda}
\bibfield{author}{\bibinfo{person}{Andrea Vezzosi}, \bibinfo{person}{Anders
  M\"{o}rtberg}, {and} \bibinfo{person}{Andreas Abel}.}
  \bibinfo{year}{2019}\natexlab{}.
\newblock \showarticletitle{Cubical {A}gda: a dependently typed programming
  language with univalence and higher inductive types}.
\newblock \bibinfo{journal}{\emph{Proceedings of the ACM on Programming
  Languages}} \bibinfo{volume}{3}, \bibinfo{number}{ICFP}
  (\bibinfo{year}{2019}), \bibinfo{pages}{87:1--87:29}.
\newblock


\end{thebibliography}

\end{document}